\date{\today}
\newcommand{\Z}{{\mathbb Z}}
\newcommand{\R}{{\mathbb R}}
\newcommand{\C}{{\mathbb C}}
\newcommand{\D}{{\mathbb D}}
\newcommand{\T}{{\mathbb T}}
\newcommand{\N}{{\mathbb N}}
\newcommand{\set}[1]{\left\{#1\right\}}
\newcommand{\abs}[1]{\left| #1\right|}
\newcommand{\norm}[1]{\left\|#1\right\|}
\newcommand{\hdim}{\dim_{\mathrm{H}}}
\newcommand{\lhdim}{\dim_{\mathrm{H}}^\mathrm{loc}}
\newcommand{\bdim}{\dim_{\mathrm{B}}}
\newcommand{\lbdim}{\dim_{\mathrm{B}}^\mathrm{loc}}
\newcommand{\hdist}{\mathrm{dist}_\mathrm{H}}
\newtheorem{theorem}{Theorem} [section]
\newtheorem{remark}[theorem]{Remark}
\newtheorem{lemma}[theorem]{Lemma}
\newtheorem{prop}[theorem]{Proposition}
\newtheorem{coro}[theorem]{Corollary}
\newtheorem{definition}[theorem]{Definition}
\begin{document}

\title[OPUC with Fibonacci Verblunsky Coefficients]{Orthogonal Polynomials on the Unit Circle with Fibonacci Verblunsky Coefficients,\\ I.~The Essential Support of the Measure}

\author[D.\ Damanik]{David Damanik}

\address{Department of Mathematics, Rice University, Houston, TX~77005, USA}

\email {\href{mailto:damanik@rice.edu}{damanik@rice.edu}}

\urladdr {\href{http://www.ruf.rice.edu/~dtd3}{www.ruf.rice.edu/$\sim$dtd3}}

\author[P.\ Munger]{Paul Munger}

\address{Department of Mathematics, Rice University, Houston, TX~77005, USA}

\email{\href{mailto:pem1@rice.edu}{pem1@rice.edu}}

\urladdr{\href{http://pem1.web.rice.edu/}{http://pem1.web.rice.edu/}}

\author[W.\ N.\ Yessen]{William N.\ Yessen}

\address{Department of Mathematics, University of California, Irvine, CA~92697, USA}

\email{\href{mailto:wyessen@math.uci.edu}{wyessen@math.uci.edu}}

\urladdr{\href{http://sites.google.com/site/wyessen/}{http://sites.google.com/site/wyessen/}}

\thanks{D.\ D.\ was supported in part by NSF grants DMS--0800100 and DMS--1067988.}

\thanks{W. N. Y. was supported by NSF grant DMS-0901627, PI: A. Gorodetski}

\keywords{orthogonal polynomials, Fibonacci sequence, trace map}
\subjclass[2000]{Primary 42C05; Secondary 37D99}

\begin{abstract}
We study probability measures on the unit circle corresponding to orthogonal polynomials whose sequence of Verblunsky coefficients is invariant under the Fibonacci substitution. We focus in particular on the fractal properties of the essential support of these measures.
\end{abstract}

\maketitle

\section{Introduction}\label{sec:intro}

There is a well known one-to-one correspondence between probability measures on the unit circle and a class of five-diagonal matrices, the so-called CMV matrices. Let us recall this correspondence.

A CMV matrix is a semi-infinite matrix of the form
$$
\mathcal{C} = \begin{pmatrix}
{}& \bar\alpha_0 & \bar\alpha_1 \rho_0 & \rho_1
\rho_0
& 0 & 0 & \dots & {} \\
{}& \rho_0 & -\bar\alpha_1 \alpha_0 & -\rho_1
\alpha_0
& 0 & 0 & \dots & {} \\
{}& 0 & \bar\alpha_2 \rho_1 & -\bar\alpha_2 \alpha_1 &
\bar\alpha_3 \rho_2 & \rho_3 \rho_2 & \dots & {} \\
{}& 0 & \rho_2 \rho_1 & -\rho_2 \alpha_1 &
-\bar\alpha_3
\alpha_2 & -\rho_3 \alpha_2 & \dots & {} \\
{}& 0 & 0 & 0 & \bar\alpha_4 \rho_3 & -\bar\alpha_4
\alpha_3
& \dots & {} \\
{}& \dots & \dots & \dots & \dots & \dots & \dots & {}
\end{pmatrix}
$$
where $\alpha_n \in \D = \{ w \in \C : |w| < 1 \}$ and $\rho_n = (1-|\alpha_n|^2)^{1/2}$. $\mathcal{C}$ defines a unitary operator on $\ell^2(\Z_+)$.

CMV matrices $\mathcal{C}$ are in one-to-one correspondence to probability measures $\mu$ on the unit circle $\partial \D$ that are not supported by a finite set. To go from $\mathcal{C}$ to $\mu$, one invokes the spectral theorem. To go from $\mu$ to $\mathcal{C}$, one can proceed either via orthogonal polynomials or via Schur functions. In the approach via orthogonal polynomials, the $\alpha_n$'s arise as recursion coefficients for the polynomials.

Explicitly, consider the Hilbert space $L^2(\partial \D,d\mu)$ and apply the Gram-Schmidt orthonormalization procedure to the
sequence of monomials $1 , w , w^2 , w^3 , \ldots$. This yields a sequence $\varphi_0 , \varphi_1 , \varphi_2 , \varphi_3 , \ldots$ of normalized polynomials that are pairwise orthogonal in $L^2(\partial \D,d\mu)$. Corresponding to $\varphi_n$, consider the ``reflected polynomial'' $\varphi_n^*$, where the coefficients of $\varphi_n$ are conjugated and then written in reverse order. Then, we have
\begin{equation}\label{e.tmbasic}
\begin{pmatrix} \varphi_{n+1}(w) \\ \varphi_{n+1}^*(w) \end{pmatrix} = \rho_n^{-1} \left( \begin{array}{cc} w & - \bar{\alpha}_n \\ - \alpha_n w& 1 \end{array} \right) \begin{pmatrix} \varphi_{n}(w) \\ \varphi_{n}^*(w) \end{pmatrix}
\end{equation}
for suitably chosen $\alpha_n \in \D$ (and again with $\rho_n = (1-|\alpha_n|^2)^{1/2}$). With these $\alpha_n$'s, one may form the corresponding CMV matrix $\mathcal{C}$ as above and obtain a unitary matrix for which the spectral measure corresponding to the cyclic vector $\delta_0$ is indeed the measure $\mu$ we based the construction on.

Depending on whether one starts out with the coefficients or the measure in this one-to-one correspondence, one obtains direct and inverse spectral theory in this setting. In this paper we will start out with the coefficients and study the associated measure. The coefficients $\alpha_n$ will be chosen to be invariant under the Fibonacci substitution $S : a \mapsto ab$, $b \mapsto a$. The study of this particular case was begun by B.~Simon in Section~12.8 of \cite{S2}, where it was pointed out that it is a natural problem to pursue this analysis further. Given the recent advances on the analogous problem in the world of orthogonal polynomials on the real line \cite{C, DG, Y2}, it is now a good time to carry out this investigation.

Let us discuss the model in more detail. Given a sequence $\{ \alpha_n \}_{n \ge 0}$ of Verblunsky coefficients that take only two values $\alpha, \beta \in \D$, we view it as an element of $\{ \alpha , \beta \}^{\Z_+}$. The substitution $S$ may be extended by concatenation to $\{ \alpha , \beta \}^{\Z_+}$, that is, the one-sided infinite word $\omega_0 \omega_1 \omega_2 \ldots \in \{ \alpha , \beta \}^{\Z_+}$ is sent to the one-sided infinite word $S(\omega_0) S(\omega_1) S(\omega_2) \ldots \in \{ \alpha , \beta \}^{\Z_+}$. There is a unique fixed point, namely, $\omega_{\alpha,\beta} = \alpha \beta \alpha \alpha \beta \ldots \in \Omega$. It can be obtained by iterating $S$ on $\alpha$. Indeed, the sequence of finite words $\alpha = S^0(\alpha)$, $\alpha \beta = S^1(\alpha)$, $\alpha \beta \alpha = S^2(\alpha)$, $\alpha \beta \alpha \alpha \beta = S^3 (\alpha)$ clearly converges to an element of $\{ \alpha , \beta \}^{\Z_+}$ (in the sense that each of these finite words is a prefix of the limit word) and
any infinite word that is fixed under $S$ arises in this way. The associated CMV matrix will be denoted by $\mathcal{C}_{\omega_{\alpha,\beta}}$.

We will study the probability measure on the unit circle corresponding to the sequence of Verblunsky coefficients given by $\omega_{\alpha,\beta}$ for $\alpha , \beta \in \D$, that is, $\alpha_0 \alpha_1 \alpha_2 \alpha_3 \alpha_4 \ldots = \alpha \beta \alpha \alpha \beta \ldots$. Recall that the (topological) support of $\mu_{\omega_{\alpha,\beta}}$ is the smallest closed subset of $\partial \D$ so that its complement has zero measure with respect to $\mu_{\alpha,\beta}$. Removing isolated points from this set, we obtain the essential (topological) support of $\mu_{\omega_{\alpha,\beta}}$. We will denote this set by $\Sigma_{\alpha,\beta}$.\footnote{The set $\Sigma_{\alpha,\beta}$ is the spectrum of the natural two-sided extension $\mathcal{E}_{\alpha,\beta}$ of $\mathcal{C}_{\alpha,\beta}$, acting unitarily on $\ell^2(\Z)$.}

It is natural and often beneficial to embed these considerations in a subshift context. Given the fixed point $\mu_{\omega_{\alpha,\beta}}$ of $S$ defined above, let us denote by $\Omega_{\alpha,\beta} \subseteq \{ \alpha , \beta \}^{\Z_+}$ the set of all one-sided infinite words that agree locally with $\omega_{\alpha,\beta}$, that is, infinite words that have the same set of finite subwords as $\omega_{\alpha,\beta}$. The set $\Omega_{\alpha,\beta}$ is called the subshift generated by the substitution $S$ on the alphabet $\{ \alpha, \beta \}$. If we consider a sequence of Verblunsky coefficients following some $\omega \in \Omega_{\alpha,\beta}$, then the associated probability measure on the unit circle will be denoted by $\mu_\omega$ and the associated CMV matrix will be denoted by $\mathcal{C}_{\omega}$. The essential spectrum of $\mathcal{C}_{\omega}$, and hence the essential (topological) support of $\mu_{\omega}$, is independent of $\omega \in \Omega_{\alpha,\beta}$ and hence it is equal to $\Sigma_{\
alpha,\beta}$. That is, results for $\Sigma_{\alpha,\beta}$ will be of relevance for the OPUC problem associated with any $\omega \in \Omega_{\alpha,\beta}$. On the other hand, the measures $\mu_\omega$ themselves are not independent of $\omega \in \Omega_{\alpha,\beta}$ and hence a finer analysis of the properties of these measures will have to take this into account.

\section{Trace Map, Invariant, and the Curve of Initial Conditions}\label{sec:geometric-setup}


From the letters $\alpha$, $\beta$ of the alphabet define the constants $\rho = \sqrt{1- |\alpha|^2}$, $\sigma = \sqrt{1-|\beta|^2}$, and $K = 2(1-\mathrm{Re}(\overline{\alpha} \beta))$. The fundamental quantity is $x_n(w)$, half the trace of the $2\times2$ transfer matrix associated with the sequence $\omega_{\alpha,\beta}$ of Verblunsky coefficients across $f_n$ (the $n^{\mathrm{th}}$ Fibonacci number) sites:\footnote{We use the notation from \cite{S, S2} throughout. In particular, the transfer matrix $T_{k}(w;\mu)$ is given by a product of $k$ matrices of the form appearing in \eqref{e.tmbasic} and maps $(\varphi_{0}(w), \varphi_{0}^*(w) )^T$ to $(\varphi_{k}(w), \varphi_{k}^*(w) )^T$.}
$$x_n(w) = \frac{1}{2} w^{-f_n/2} \, \mathrm{Tr} \, T_{f_n}(w;\mu_\omega).$$
These traces obey the recursion
\begin{equation}\label{e.tracemaprecursion}
x_{n+1} = 2 x_n x_{n-1} - x_{n-2}
\end{equation}
if we define $x_0$ to be $(w^{1/2} + w^{-1/2})/(2\sigma)$ and $x_{-1}$ to be $K/(2\rho \sigma)$; compare \cite[Theorem~12.8.5]{S2}. The traces $x_n(w)$ are important because $w \in \Sigma_{\alpha,\beta}$ if and only if $x_n (w)$ is a bounded sequence (\cite[Theorem~12.8.3]{S2}).

The quantity
$$
I(w) = x^2_{n+1}(w) + x^2_n(w) + x^2_{n-1}(w) - 2 x_{n+1}(w) x_n(w) x_{n-1}(w) - 1
$$
depends on $w$ but not $n$; see \cite[Theorem~12.8.5]{S2}. By choosing $n=1$ we then get
\begin{align*}
I(w)  &= x^2 _1 (w) + x^2 _0 (w) + x^2 _{-1} (w) - 2 x_1 (w) x_0 (w) x_{-1} (w) - 1\\
& = K^2/4\rho^2 \sigma^2 + (w + 2 + w^{-1})/4\sigma^2 + (w + 2 + w^{-1})/4\rho^2 - \frac{K(w + 2 + w^{-1})}{4\rho^2 \sigma^2} - 1 \\
& = \mathrm{Re} \, w \left(\frac{1}{2\rho^2} + \frac{1}{2\sigma^2} - \frac{K}{2\rho^2 \sigma^2}\right) + \frac{K^2 - 2K}{4\rho^2 \sigma^2} + \frac{1}{2\sigma^2} + \frac{1}{2\rho^2} - 1.
\end{align*}

From this it is clear that $I(w)$ is a real number for all $w$ on the unit circle.  Considering the trace recursion as a map $T: \R^3 \ \longrightarrow \R^3$,\footnote{We denote a point in $\R^3$ by $(x,y,z)$ and for this reason use $w$ to denote the spectral parameter.}
$$
T(x,y,z) = ( 2 x y - z, x, y).
$$
The $n$-invariance of $I(w)$ comes from the fact that each of the sets
$$
S_V = \{(x, y, z)\ : \ x^2 + y^2 + z^2 - 2xyz - 1 = V\}
$$
is preserved by $T$. If $V > 0$, then $S_V$ is a smooth, connected, non-compact two-dimensional submanifold of $\R^3$ homeomorphic to the four-punctured sphere. When $V = 0$, $S_V$ develops four conic singularities, away from which it is smooth (see Figure \ref{fig:inv-surfaces} (a)). The surface $S_0$ is sometimes called \textit{the Cayley cubic}. When $-1 < V < 0$, $S_V$ contains five smooth connected components: four noncompact, homeomorphic to the two-disc, and one compact, homeomorphic to the two-sphere. When $V = -1$, $S_V$ consists of the four smooth noncompact discs and a point at the origin. When $V < -1$, $S_V$ consists only of the four noncompact two-discs. Compare with Figure \ref{fig:inv-surfaces}.

To investigate the dynamics of the trace map, we consider the curve of initial conditions
\begin{align*}
\gamma_{\alpha,\beta}(w) & = (x_1 (w), x_0 (w), x_{-1} (w)) = \left( \frac{w^{1/2} + w^{-1/2}}{2\rho}, \frac{w^{1/2} + w^{-1/2}}{2\sigma}, \frac{K}{2\rho \sigma} \right) \\
& =: \left( \frac{\eta}{2\rho}, \frac{\eta}{2\sigma}, \frac{K}{2\rho \sigma} \right)
\end{align*}
and ask how points on it behave under iteration of the map $T$.

Much of our analysis is based on dynamical properties of this so-called \textit{Fibonacci trace map} - an analytic map defined on $\R^3$. Its first appearance in the literature dates back to early 1980's, when a connection between smooth dynamical systems and spectral analysis of quasiperiodic Schr\"odinger operators was discovered and explored in the pioneering works of Kohmoto et al.\ \cite{kkt} and Ostlund et al.\ \cite{oprss}.

In this section we discuss some properties of the Fibonacci trace map, including some model-independent results. We then discuss the connection between the Fibonacci trace map and the essential support of the spectrum of CMV matrices with Fibonacci coefficients. This connection is exploited in Section \ref{sec:fractal-dims} to give a detailed description of the fractal nature of the essential support of the spectrum, as well as estimates on its fractal dimensions.

The existing literature on trace maps is extensive, including some rather comprehensive surveys. Since it is not our intention here to give a comprehensive overview of trace maps or of their connection with quasiperiodic Hamiltonians, known results are quoted only as necessary. We do, however, point the interested reader to \cite{C, BR, BR2, BR3, Rob} (and references therein) for a deeper look.

\subsection{Preliminary Results on Dynamics of the Fibonacci Trace Map}\label{subsec:dynamics-prelims}

Clearly $T$ is analytic, is defined on all of $\R^3$, and is invertible with the inverse
\begin{align}\label{eq:tmap-inverse}
T^{-1}(x,y,z) = (y,z, 2yz - x).
\end{align}
%

%
%
%
%
%
\begin{figure}
\subfigure[$V = 0.0001$]{
\includegraphics[scale=.22]{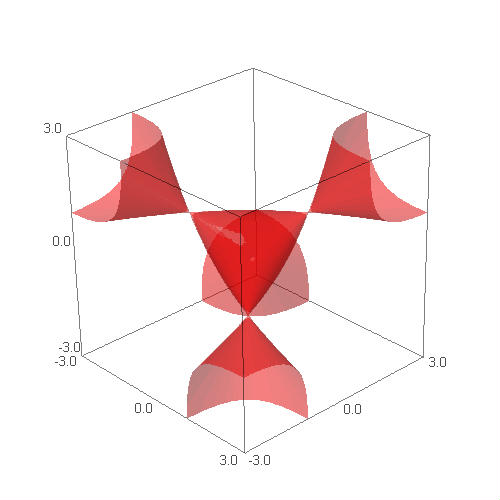}}
\subfigure[$V = 0.01$]{
\includegraphics[scale=.22]{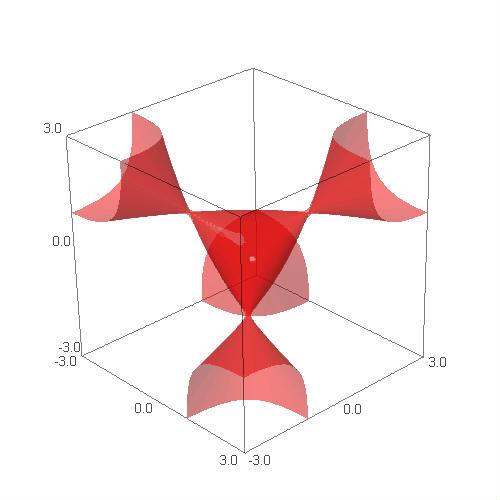}}
\subfigure[$V = 0.05$]{
\includegraphics[scale=.22]{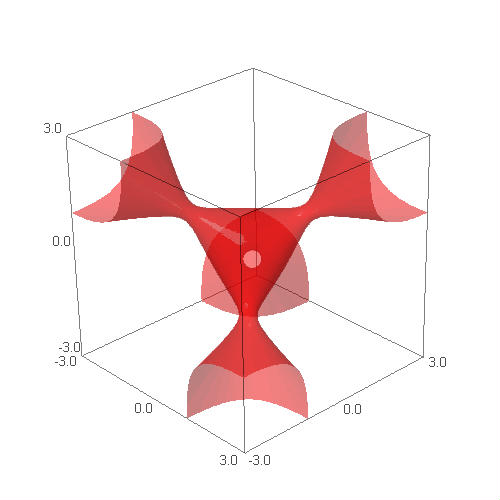}}
\\
\subfigure[$V = -0.95$]{
\includegraphics[scale=.22]{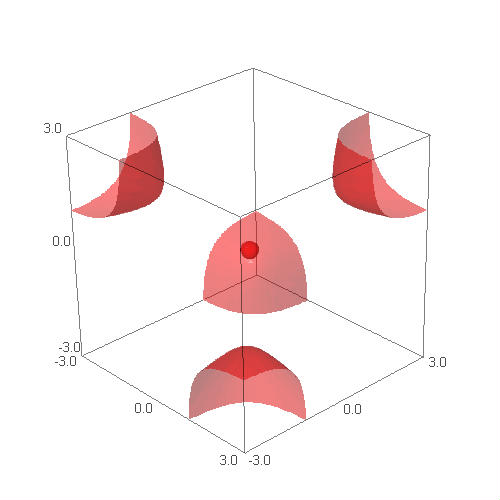}}
\subfigure[$V = -0.5$]{
\includegraphics[scale=.22]{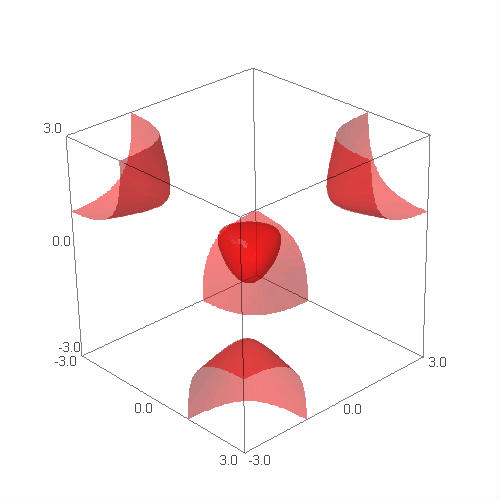}}
\subfigure[$V = -0.1$]{
\includegraphics[scale=.22]{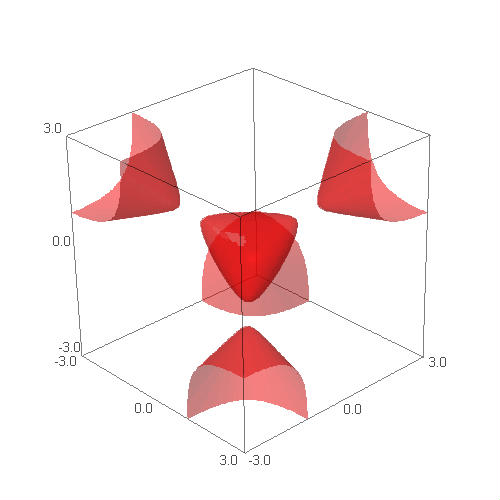}}
\caption{Invariant surfaces $S_V$ for four values of $V$ (figure taken from \cite{Y})}
\label{fig:inv-surfaces}
\end{figure}
%
%
%
%

Define a smooth three-dimensional submanifold $\mathcal{M}$ of $\R^3$ by
\begin{align}\label{eq:m}
\mathcal{M} := \bigcup_{V > 0}S_V.
\end{align}
We shall use this notation often.

In what follows, we shall be concerned with those points in $\mathcal{M}$, whose forward semi-orbit under $T$ is bounded; that is, $p\in\mathcal{M}$ that satisfy:
\begin{align*}
\mathcal{O}_T^+(p):= \set{T^n(p)}_{n\in\N}\text{ is bounded.}
\end{align*}
We similarly define the backward semi-orbit and the full orbit of a point $p$ by, respectively,
\begin{align*}
\mathcal{O}_T^-(p) = \set{T^{-n}(p)}_{n\in\N} \hspace{2mm}\text{ and }\hspace{2mm}\mathcal{O}_T(p) = \mathcal{O}_T^+(p)\bigcup\mathcal{O}_T^-(p).
\end{align*}

For convenience and brevity, we shall say that \textit{a point} $p$ \textit{satisfies property} $\mathbf{B}$ (or \textit{has property} $\mathbf{B}$, or \textit{is of type} $\mathbf{B}$, or \textit{is type}-$\mathbf{B}$) if $p$ has bounded forward semi-orbit.

We'll also need to identify points of type $\mathbf{B}$ on $S_0$. Since $T$ preserves $S_V$ for every $V$, and $\mathcal{M}$ is foliated by $\set{S_V}_{V > 0}$, our task is equivalent to identifying points of type $\mathbf{B}$ on $S_V$, for $V \geq 0$. We do this next.

We begin with a basic result that guarantees that if the forward semi-orbit of a point under $T$ is unbounded, then it does not contain any infinitely long bounded subsequences.

\begin{prop}[See \cite{Rob}]\label{prop:escape}
A point $p\in\R^3$ is either a type-$\mathbf{B}$ point for $T$, or its orbit under $T$ diverges to infinity superexponentially fast in every coordinate.
\end{prop}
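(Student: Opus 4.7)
The plan is to reduce the problem to the scalar third-order recursion implicit in the coordinates of the orbit, exhibit an explicit forward-invariant ``escape region'' $E \subset \R^3$ on which iterates grow doubly exponentially, and then use the conserved invariant $I$ to argue that every unbounded forward orbit must eventually visit $E$. Writing $p_n = T^n(p) = (u_n, u_{n-1}, u_{n-2})$ with $(u_0, u_{-1}, u_{-2}) = (x,y,z)$, the recursion becomes $u_{n+1} = 2 u_n u_{n-1} - u_{n-2}$ and boundedness of $\mathcal{O}_T^+(p)$ is equivalent to boundedness of $\{u_n\}_{n \ge -2}$. Setting $V = I(p)$, the invariant reads $u_{n+1}^2 + u_n^2 + u_{n-1}^2 - 2 u_{n+1} u_n u_{n-1} = V+1$ for every $n$, which will be used below as a quadratic relation among three consecutive $u_k$'s.

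For the escape step I would take
$$
E = \bigl\{(a,b,c) \in \R^3 : |a| > |b| > |c| > 1\bigr\}.
$$
A direct computation gives $T(E) \subseteq E$: if $(a,b,c) \in E$ and $(a',b',c') = T(a,b,c) = (2ab - c, a, b)$, then $|c| < |b| < 2|a||b|$ yields $|a'| \ge 2|a||b| - |c| > 2|a||b| - |b| = |b|(2|a|-1) > |a||b|$, and together with $|b'| = |a|$ and $|c'| = |b|$ this gives the chain $|a'| > |a||b| > |a| = |b'| > |b| = |c'| > 1$. Iterating, once $p_N \in E$ one has $|u_{n+1}| > |u_n| |u_{n-1}|$ for all $n \ge N$, i.e.\ $L_{n+1} > L_n + L_{n-1}$ for $L_n := \log|u_n|$. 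Since $L_N, L_{N+1} > 0$, this forces $L_n \ge c \phi^n$ for some $c > 0$ and all large $n$, where $\phi$ is the golden ratio, so each of the three coordinates of $p_n$, being consecutive entries of $\{u_n\}$, diverges at least like $\exp(c \phi^n)$, superexponentially in the sense of the statement.

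The main obstacle is the capture step: showing that any $p$ with $\{u_n\}$ unbounded has $p_n \in E$ for some $n$. The invariant rewrites as
$$
u_{n+1} = u_n u_{n-1} \pm \sqrt{(u_n^2 - 1)(u_{n-1}^2 - 1) + V},
$$
so a uniform bound on two consecutive terms $u_n, u_{n-1}$ forces a bound on $u_{n+1}$; hence unboundedness of $\{u_n\}$ forces the appearance of consecutive pairs $(u_n, u_{n-1})$ with both entries of arbitrarily large modulus. From such a pair a short case analysis on the sign of the square root and on the size of $|u_{n-2}|$ (using the same quadratic identity to control $|u_{n-2}|$ in terms of $|u_{n-1}|, |u_n|$) produces an iterate satisfying $|u_k| > |u_{k-1}| > |u_{k-2}| > 1$, i.e.\ $p_{k-2} \in E$. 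This step reflects the geometric fact exploited by Roberts in \cite{Rob} that each invariant surface $S_V$ has four cusp-like ends at infinity that are cyclically permuted by $T$, and that every unbounded orbit must approach one of them along a branch where the three coordinates become strictly ordered in magnitude; making this ordering appear in finite time from only a subsequential growth assumption is the delicate point.
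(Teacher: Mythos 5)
The paper offers no proof of this proposition---it is cited directly to Roberts \cite{Rob}---so there is nothing internal to compare against; what you are really being asked to reconstruct is Roberts's argument. Your overall architecture (a forward-invariant escape region together with a ``capture'' step showing every unbounded forward orbit enters it) is indeed the right one, and your \emph{escape} step is correct and complete: the verification that $T(E)\subseteq E$ with $E=\{|a|>|b|>|c|>1\}$, the resulting inequality $|u_{n+1}|>|u_n||u_{n-1}|$, and the Fibonacci-type lower bound $\log|u_n|\ge c\phi^n$ are all sound, and your $E$ sits inside Roberts's escape region $\{|x|,|y|>1,\,|xy|\ge|z|\}$ (the one quoted later in the paper as Lemma~\ref{lem:escape-cond}), so that part of the work is genuinely done.

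The \emph{capture} step, however, contains a real gap, and it is the hard half. Your stated reduction---``a uniform bound on two consecutive terms forces a bound on the next; hence unboundedness forces consecutive pairs with both entries arbitrarily large''---is not a valid implication on its own. An unbounded sequence can perfectly well have $\min(|u_n|,|u_{n-1}|)$ bounded for all $n$ (think of $1,2,1,4,1,8,\dots$); the claim that this cannot happen for a trace-map orbit is precisely the content that needs proving, and it must use the discriminant constraint $(u_n^2-1)(u_{n-1}^2-1)+V\ge 0$ quantitatively, together with a case analysis on signs and relative sizes of $u_{n-2},u_{n-1},u_n$. Moreover, even once one has a consecutive pair with both coordinates large, deducing that some iterate satisfies the strict ordering $|u_k|>|u_{k-1}|>|u_{k-2}|>1$ (or the weaker Roberts condition) is not a ``short case analysis''---it is the substance of Roberts's theorem and of Propositions~5--6 in \cite{BR2}, and you do not carry it out. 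You acknowledge this yourself (``making this ordering appear in finite time \dots is the delicate point''), but an acknowledged gap is still a gap: as written, the proposal proves the dichotomy only for initial points that already lie in (or eventually enter for reasons not given) the escape region, not for all $p\in\R^3$.
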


In what follows, we use (standard) notation and terminology from the theory of hyperbolic dynamical systems. For a brief overview, see Appendix \ref{a1} below.

\subsubsection{Dynamics of $T$ on the Cayley Cubic $S_0$}\label{subsubsec:dynamics-cayley-cubic}

Let $\mathbb{T}^2$ denote the two-dimensional torus $\R^2/\Z^2$, and $\mathcal{A}:\mathbb{T}^2\rightarrow \mathbb{T}^2$ - an automorphism on $\mathbb{T}^2$ given by the matrix
\begin{align}\label{eq:torus-auto}
\mathcal{A} = \begin{pmatrix}1 & 1\\ 1 & 0\end{pmatrix}.
\end{align}
Observe that $\mathcal{A}$ induces an Anosov diffeomorphism on $\mathbb{T}^2$. Now, define $\mathcal{F}: \mathbb{T}^2\rightarrow\R^3$ by
\begin{align}\label{eq:semiconjugacy}
\mathcal{F}(\theta,\phi) = (\cos 2\pi(\theta + \phi), \cos 2\pi\theta, \cos 2\pi\phi).
\end{align}
Denote the part of the Cayley cubic that lies inside of the unit cube centered at the origin in $\R^3$ by $\mathbb{S}$. It turns out that $\mathbb{S}$ is invariant under $T$, and the map $\mathcal{F}$ defines a semiconjugacy between $(\mathbb{T}^2, \mathcal{A})$ and $(\mathbb{S}, T)$; that is, the following diagram commutes:
\begin{align}\label{eq:semiconjugacy-cd}
\begin{CD}
\mathbb{T}^2 @>\mathcal{A}>> \mathbb{T}^2\\
@V\mathcal{F}VV                       @VV\mathcal{F}V\\
\mathbb{S} @>T>>                      \mathbb{S}
\end{CD}
\end{align}
The map $\mathcal{F}$ is not, however, a conjugacy in the sense of \eqref{part2_eq3}, since $\mathcal{F}$ is not invertible. In fact, $(\mathbb{T}, \mathcal{F})$ is a double cover of $\mathbb{S}$.

By invariance of $\mathbb{S}$ under $T$ it follows that all points of $\mathbb{S}$ are of type $\mathbf{B}$. Let us now see whether there are any other type-$\mathbf{B}$ points on $S_0$.

As has been mentioned above, $S_0$ contains four conic singularities; explicitly, they are
\begin{align}\label{eq:singularities}
P_1 = (1,1,1),\hspace{2mm} P_2 = (-1,-1,1),\hspace{2mm} P_3 = (1,-1,-1),\hspace{2mm} P_4 = (-1,1,-1).
\end{align}
The point $P_1$ is fixed under $T$, while $P_2$, $P_3$ and $P_4$ form a three cycle:
\begin{align*}
P_1\overset{T}{\longmapsto}P_1;\hspace{4mm} P_2\overset{T}{\longmapsto}P_3\overset{T}{\longmapsto}P_4\overset{T}{\longmapsto}P_2
\end{align*}
(which can be verified via direct computation, or by the semiconjugacy \eqref{eq:semiconjugacy-cd}). As it will soon become apparent, it is convenient to work with $\widetilde{T}:= T^6$ (the six-fold iteration of $T$) instead of $T$. By Proposition \ref{prop:escape}, type-$\mathbf{B}$ points of $T$ are precisely type-$\mathbf{B}$ points of $\widetilde{T}$.

For each $i\in\set{1,\dots,4}$, there is a smooth curve $\rho_i$, containing no self-intersections, passing through the singularity $P_i$, such that $\rho_i\setminus{P_i}$ is a disjoint union of two smooth curves---call them $\rho_i^l$ and $\rho_i^r$---with the following properties:
\begin{itemize}

\item $\rho_i^{l,r}\subset \mathcal{M}$;

\item $T(\rho_1^l) = \rho_1^r$ and $T(\rho_1^r) = T(\rho_1^l)$. In particular, points of $\rho_1^{l,r}$ are periodic of period two, and $\rho_1$ is fixed under $T$, and hence also under $\widetilde{T}$;

\item The six curves $\rho_i^{l,r}$, $i = 2, 3, 4$, form a six cycle under $T$. In particular, points of $\rho_i^{l,r}$, $i = 2, 3, 4$, are periodic of period six, and hence for $i = 2, 3, 4$, $\rho_i$ is fixed under $\widetilde{T}$.

\end{itemize}
%
%
%
%
%
\begin{figure}
\subfigure{
\includegraphics[scale=.28]{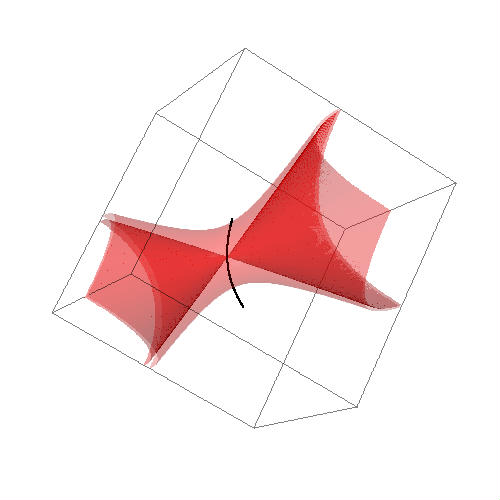}}
\subfigure{
\includegraphics[scale=.28]{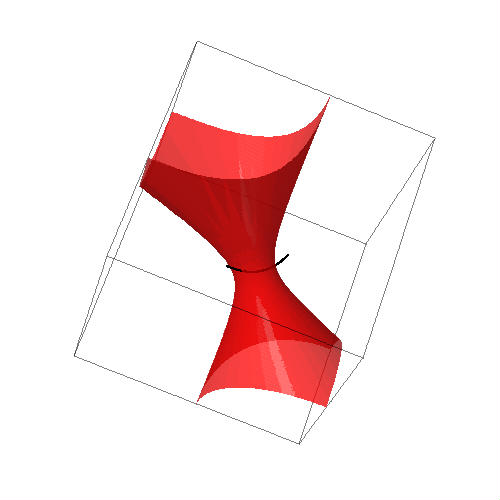}}
\caption{The curve $\rho_1$ in the vicinity of $P_1$ (figure taken from \cite{Y}).}
\label{fig:per}
\end{figure}
%
%
%
%

The curve $\rho_1$ (see Figure \ref{fig:per}) is given explicitly below, as we'll need this explicit expression later.
\begin{align}\label{eq:per-2-curve}
\rho_1 = \set{\left(x,\hspace{1mm}\frac{x}{2x-1},\hspace{1mm}x\right): x\in\left(-\infty,1/2\right)\cup\left(1/2,\infty\right)}.
\end{align}
Expressions for the other three curves can be obtained from \eqref{eq:per-2-curve} using symmetries of $T$ to be discussed below.

It follows via simple computation that for any $i = 1,\dots, 4$ and any point $p\in \rho_i^{l,r}$, the eigenvalue spectrum of $D\widetilde{T}_p$ is $\set{1, \lambda(p), 1/\lambda(p)}$ with $0 < \abs{\lambda(p)} < 1$, where $D\widetilde{T}_p$ denotes the differential of $\widetilde{T}$ at the point $p$. The eigenspace corresponding to the eigenvalue $1$ is tangent to $\rho_i$ at $p$. At $P_i$, the eigenvalue spectrum of $D\widetilde{T}_{P_i}$ is of the same form, and as above, the eigenspace corresponding to the unit eigenvalue is tangent to $\rho_i$ at $P_i$. It follows that the curves $\set{\rho_i}_{i=1,\dots,4}$ are normally hyperbolic one-dimensional submanifolds of $\R^3$, as defined in Section \ref{a_3}. This will be the main ingredient in the proof of

\begin{lemma}\label{lem:type-b-on-cones}
There exist type-$\mathbf{B}$ points in $S_0\setminus\mathbb{S}$; these points form a disjoint union of four smooth injectively immersed  connected one-dimensional submanifolds of $S_0\setminus \mathbb{S}$, $\mathcal{W}_1,\dots,\mathcal{W}_4$, such that for every $p\in \mathcal{W}_i$, we have
\begin{align}\label{eq:type-b-on-cones-eq1}
\lim_{n\rightarrow\infty}\widetilde{T}^n(p) = P_i,\hspace{2mm}\text{ and }\hspace{2mm}\widetilde{T}(\mathcal{W}_i) = \mathcal{W}_i.
\end{align}
Moreover, for $i\in\set{1,\dots,4}$, there exists an open neighborhood $\mathcal{U}$ of $P_i$, such that
\begin{align}\label{eq:type-b-on-cones-eq2}
\text{for every }
(x,y,z)\in\mathcal{W}_i\cap\mathcal{U}\hspace{2mm}\text{we have}\hspace{2mm}\abs{x},\abs{y},\abs{z} > 1.
\end{align}
In particular, if $p\in\mathcal{W}_i$, for any $i\in\set{1,\dots,4}$, then for all $n\in\N$ sufficiently large, all three coordinates of $\widetilde{T}^n$ (and hence of $T^n(p)$) are greater than one in absolute value.

Points of $S_0$ not belonging to $\mathbb{S}$ or $\bigcup_i \mathcal{W}_i$ are not of type $\mathbf{B}$.
\end{lemma}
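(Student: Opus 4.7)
My plan is to realize each $\mathcal{W}_i$ as the outer branch of the strong stable manifold of $P_i$ under $\widetilde{T}$, globalize by iterating $\widetilde{T}^{-1}$, and extract uniqueness from the normal hyperbolicity of $\rho_i$ just established. Since $P_i\in\rho_i$ is a fixed point of $\widetilde{T}$ with spectrum $\set{1,\mu_i,\mu_i^{-1}}$, $0<\abs{\mu_i}<1$, the strong stable manifold theorem (equivalently, the strong stable foliation of $W^s(\rho_i)$ restricted to the leaf through $P_i$) furnishes a smooth $1$-dimensional manifold $W^{ss}(P_i)$ tangent at $P_i$ to the $\mu_i$-eigenspace; for $P_1$ a direct computation gives $\mu_1 = ((3-\sqrt{5})/2)^6 = \phi^{-12}$ (where $\phi = (1+\sqrt{5})/2$) with stable eigenvector $v_s = (\lambda^2,\lambda,1)$, $\lambda = (3-\sqrt{5})/2$. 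Because $I$ is continuous and $T$-invariant with $I(P_i) = 0$, every $q\in W^{ss}(P_i)$ satisfies $I(q) = \lim_n I(\widetilde{T}^n(q)) = 0$, so $W^{ss}(P_i)\subset S_0$.

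Next I locate $W^{ss}(P_i)$ relative to $\mathbb{S}$. The Hessian of $I$ at $P_1$ is nondegenerate of signature $(2,1)$, so near $P_i$ the Cayley cubic is a real quadric cone with two nappes: the one pointing in the $-(1,1,1)$ direction (at $P_1$; sign-flipped analogues at the other $P_j$) lies in $\mathbb{S}$, and the opposite nappe lies in $S_0\setminus\mathbb{S}$. I would then verify by direct calculation that $v_s$ is tangent to this cone (equivalently $v_s^{\,T}\!H\,v_s = 0$, which after substitution reduces to $\lambda^4 - 2\lambda^3 - \lambda^2 - 2\lambda + 1 = 0$, satisfied by $\lambda = (3-\sqrt{5})/2$) and that $v_s\cdot(1,1,1) = \lambda^2+\lambda+1 > 0$, so $v_s$ points into the outer nappe. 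Consequently the two branches of $W^{ss}(P_i)\setminus\set{P_i}$ lie in the two distinct nappes; set $\mathcal{W}_i^{\mathrm{loc}}$ equal to the outer branch. Shrinking a neighborhood $\mathcal{U}$ of $P_i$ forces $\abs{x},\abs{y},\abs{z} > 1$ throughout $\mathcal{W}_i^{\mathrm{loc}}\cap\mathcal{U}$, which gives \eqref{eq:type-b-on-cones-eq2}; the corresponding statements at $P_2,P_3,P_4$ follow by transporting $v_s$ under the sign-flip symmetries of $T$ and $S_0$. Globalizing, set $\mathcal{W}_i := \bigcup_{n\ge 0}\widetilde{T}^{-n}(\mathcal{W}_i^{\mathrm{loc}})$: this is a smooth, injectively immersed, connected $1$-dimensional submanifold of $S_0\setminus\mathbb{S}$, trivially satisfies $\widetilde{T}(\mathcal{W}_i) = \mathcal{W}_i$, and every $p\in\mathcal{W}_i$ has $\widetilde{T}^n(p)\to P_i$ because some iterate enters $\mathcal{W}_i^{\mathrm{loc}}$.

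Finally, for uniqueness let $p\in S_0\setminus\mathbb{S}$ be type-$\mathbf{B}$. Since $\mathbb{S}$ is $\widetilde{T}$-invariant and $T$ permutes the four horns of $S_0\setminus\mathbb{S}$ parallel to how it permutes the $P_i$'s, each horn is $\widetilde{T}$-invariant, so the bounded forward orbit of $p$ stays in a single horn attached at some $P_i$. The normal hyperbolicity of $\rho_i$ produces a neighborhood $\mathcal{V}$ of $\rho_i$ in $\R^3$ such that every forward-bounded orbit entering $\mathcal{V}$ is contained in $W^s(\rho_i)$; since $\rho_i\cap S_0 = \set{P_i}$, such an orbit on $S_0$ must converge to $P_i$, placing $p$ on $W^{ss}(P_i)\cap(S_0\setminus\mathbb{S}) = \mathcal{W}_i$. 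The main technical hurdle I anticipate is ruling out bounded forward orbits that stay uniformly away from $P_i$ in the horn, thereby never entering $\mathcal{V}$; I would handle this by combining the absence of $\widetilde{T}$-fixed points on the horn other than $P_i$ with Proposition~\ref{prop:escape}, which forces simultaneous superexponential divergence in all three coordinates whenever the forward orbit is not type-$\mathbf{B}$, so no ``middle'' recurrent set can survive in the horn away from $P_i$.
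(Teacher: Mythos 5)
Your construction of $\mathcal{W}_i$ follows essentially the same route as the paper: both realize $\mathcal{W}_1$ as the strong stable leaf $W^{ss}(P_1)$ inside $W^s(\rho_1)$, both use invariance of the Fricke--Vogt level sets to place $W^{ss}(P_1)$ in $S_0$, and both use the tangent eigenvector of $DT_{P_1}$ together with the position of the cone relative to the unit cube to obtain \eqref{eq:type-b-on-cones-eq2}. Your explicit verifications (eigenvalues $-1,(3\pm\sqrt5)/2$ of $DT_{P_1}$, stable eigenvector $v_s=(\lambda^2,\lambda,1)$, signature $(2,1)$ of the Hessian, tangency $v_s^{\mathrm T}Hv_s=0$) are all correct and nicely flesh out the paper's ``a simple computation shows.''

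The gap is in the converse direction, and your own flagged ``technical hurdle'' is indeed where the proof breaks. You must show that every type-$\mathbf{B}$ point in the horn eventually enters and then \emph{stays} in a small neighborhood $\mathcal{V}$ of $\rho_i$ (only then does the normally hyperbolic theory force $p\in W^s(\rho_i)$; a single visit to $\mathcal{V}$ is not enough). Your proposed resolution — ``absence of $\widetilde{T}$-fixed points other than $P_i$ plus Proposition~\ref{prop:escape}'' — does not do this: Proposition~\ref{prop:escape} dichotomizes between type-$\mathbf{B}$ and superexponential escape, so it says nothing at all about a hypothetical type-$\mathbf{B}$ orbit that stays bounded in the horn away from $P_i$; and the absence of fixed points does not preclude periodic orbits or other compact invariant sets living in the horn. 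The paper closes exactly this gap by citing the proofs of Propositions~5 and~6 of \cite{BR2}, which establish that a non-escaping orbit in the cone must converge to $P_1$ (this is the content your argument is missing). You need either to invoke that result or to prove a concrete escape estimate for the horn (for instance an iterate version of Lemma~\ref{lem:escape-cond} valid throughout the horn minus a shrinking neighborhood of $P_i$); the absence-of-fixed-points observation alone is insufficient.
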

(We shall need \eqref{eq:type-b-on-cones-eq2} later when we investigate fractal properties of the essential support of the spectra of CMV matrices.)

Before proving Lemma \ref{lem:type-b-on-cones}, we mention certain symmetries of the map $\widetilde{T}$ which can be employed to simplify technical details of some arguments. Indeed, it turns out that when proving geometric properties of $\widetilde{T}$, the singularities $\set{P_i}$ require special attention (see, for example, \cite{Damanik2009} and \cite{Y, Y2}). It turns out that it is enough to handle only $P_1$, due to certain symmetries of $\widetilde{T}$. For example, by applying these symmetries to $\rho_1$ in \ref{eq:per-2-curve}, one obtains existence and properties (discussed above) of the other three curves: $\rho_2$, $\rho_3$ and $\rho_4$. Let us discuss these symmetries now.

Let us denote the group of symmetries of $\widetilde{T}$ by $\mathcal{G}_\mathrm{sym}$, and the group of reversing symmetries of $\widetilde{T}$ by $\mathcal{G}_\mathrm{rev}$; that is,
\begin{align}\label{eq:sym-group}
\mathcal{G}_\mathrm{sym} = \set{s\in\mathrm{Diff}(\R^3): s\circ \widetilde{T}\circ s^{-1} = \widetilde{T}},
\end{align}
and
\begin{align}\label{eq:rev-group}
\mathcal{G}_\mathrm{rev} = \set{s\in\mathrm{Diff}(\R^3): s\circ \widetilde{T}\circ s^{-1} = \widetilde{T}^{-1}},
\end{align}
where $\mathrm{Diff}(\R^3)$ denotes the set of diffeomorphisms on $\R^3$.

Observe that $\mathcal{G}_\mathrm{rev}\neq\emptyset$. Indeed,
\begin{align}\label{eq:rev-sym}
s(x,y,z) = (z,y,x)
\end{align}
is a reversing symmetry of $T$, and hence also of $\widetilde{T}$. Hence $\widetilde{T}$ is smoothly conjugate to $\widetilde{T}^{-1}$. It follows (see Appendix \ref{a1}) that forward-time dynamical properties of $\widetilde{T}$, as well as the geometry of dynamical invariants (such as stable manifolds) are mapped smoothly and rigidly to those of $\widetilde{T}^{-1}$. That is, forward-time dynamics of $\widetilde{T}$ is essentially the same as its backward-time dynamics.

The group $\mathcal{G}_\mathrm{sym}$ is also nonempty, and more importantly, it contains the following diffeomorphisms:
\begin{align}\label{eq:symmetries}
s_2: (x,y,z)\mapsto (-x, -y, z),\notag\\
s_3: (x,y,z)\mapsto(x,-y,-z),\\
s_4: (x,y,z)\mapsto(-x,y,-z).\notag
\end{align}
Notice that the symmetries $\{s_i\}$ are rigid transformations. Also notice that
\begin{align}\label{eq:symmetry-P1}
s_i(P_1) = P_i.
\end{align}
For a more general and extensive discussion of symmetries and reversing symmetries of trace maps, see \cite{BR}.

We are now ready to prove Lemma \ref{lem:type-b-on-cones}.

\begin{proof}[Proof of Lemma \ref{lem:type-b-on-cones}]
Let us concentrate on the curve $\rho_1$ and, for statements near the singularities, on the singularity $P_1$. The general result will then follow by application of the symmetries $\set{s_i}$ from \eqref{eq:symmetries}.

As has already been discussed, $\rho_1$ is a fixed normally hyperbolic submanifold of $\R^3$ for the map $\widetilde{T}$. Moreover, $\rho_1\setminus\set{P_1} = \rho_i^l\cup\rho_i^l$ belongs to $\mathcal{M}$ ($\rho_1$ intersects $S_0$ only at the point $P_1$). Denote the stable set of $\rho_1$ by $W^s(\rho_1)$ (see Appendix \ref{a1} for definitions and notation), and let $W^{ss}(P_1)$ denote the one-dimensional stable manifold to $P_1$ in $W^s(\rho_1)$. By invariance of the surfaces $\set{S_V}$, it follows that $W^{ss}(P_1)\subset S_0$. Since $P_1$ is a conic singularity of $\mathbb{S}$ and $P_1\in W^{ss}(P_1)$, by smoothness of $W^{ss}(P_1)$, $W^{ss}(P_1)$ cannot be entirely contained in $\mathbb{S}$, and therefore has a part in the cone of $S_0$ that is attached to $P_1$. Denote this part by $\mathcal{W}_1$. Since points of $\mathcal{W}_1$ converge to $P_1$ in positive time, all points of $\mathcal{W}_1$ satisfy property $\mathbf{B}$. Moreover, $\mathcal{W}_1$ is a connected  smooth injectively-immersed one-
dimensional submanifold of $S_0\setminus\mathbb{S}$ (it is formed by the intersection of $W^s(\rho_1)$ with $S_0\setminus\mathbb{S}$, and this intersection is quadratic).

Next let us prove that all points of the cone of $S_0$ attached to $P_1$ other than those lying on $\mathcal{W}_1$ escape to infinity in forward time.

From the proofs of Propositions 5 and 6 in \cite{BR2}, it follows that a point $p$ in the given cone does not escape if and only if
\begin{align*}
\lim_{n\rightarrow\infty}\widetilde{T}^n(p) = P_1.
\end{align*}
Since $\rho_1$ contains $P_1$ and is normally hyperbolic, it follows that $p\in W^s(\rho_1)$, and hence $p\in \mathcal{W}_1$.

Conversely, if $p\in\mathcal{W}_1$, then since $p\in W^{ss}(P_1)$,
\begin{align*}
\lim_{n\rightarrow\infty}\widetilde{T}^n(p) = P_1.
\end{align*}
Invariance of $\mathcal{W}_1$ under $\widetilde{T}$ also follows immediately from normal hyperbolicity. This proves \eqref{eq:type-b-on-cones-eq1}.

To prove \eqref{eq:type-b-on-cones-eq2} in a neighborhood of $P_1$, observe that the tangent space to $\mathcal{W}_1$ at $P_1$ is spanned by the eigenvector of $DT_{P_1}$ corresponding to the eigenvalue which is smaller than one in absolute value. A simple computation shows that the $z$ component of this vector is positive. Combining this with the fact that the cone attached to the singularity $P_1$ does not intersect the unit cube centered at the origin at points other than $P_1$, we get that $\mathcal{W}_1$ in a neighborhood of $P_1$ must lie in the region where all three coordinates are greater than one.

Finally, all analogous claims for the other singularities follow by the symmetries $\set{s_i}_{i=2,3,4}$ defined in \eqref{eq:symmetries}, since these symmetries are rigid and $s_i$ maps the cone attached to $P_1$ to the one attached to $P_i$; moreover, all four cones are fixed under $\widetilde{T}$. This completes the proof.
\end{proof}

\subsubsection{Dynamics of $T$ on $S_{V < 0}$}\label{subsubsec:dynamics-inside}

As was mentioned earlier, the surface $S_V$, $V < 0$, consists of five connected components, one of which is bounded. It turns out that the bounded component is invariant under $T$, hence consists entirely of type-$\mathbf{B}$ points; on the other hand, every point on the non-bounded components escapes to infinity (see \cite{Rob}). This leads to the following simple result which will be used later in Section \ref{sec:fractal-dims}.

\begin{lemma}\label{lem:open-bounded}
For all $V_0 < 0$ and $p \in S_{V_0}$ of type $\mathbf{B}$, there exists an open neighborhood $B_p \subset \bigcup_{V < 0} S_V \subset{\mathbb{R}^3}$ of $p$, such that every point of $B_p$ is also a type-$\mathbf{B}$ point.
\end{lemma}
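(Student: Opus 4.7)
The plan is to use the dichotomy between the compact and the non-compact components of $S_V$ (for $V<0$) to locate $p$, and then produce the required neighborhood as a small tubular piece of the foliation by bounded invariant components.

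Since every point on the four non-compact components of $S_{V_0}$ escapes to infinity under $T$ (as quoted from \cite{Rob} just before the lemma), the type-$\mathbf B$ hypothesis forces $p$ to lie on a bounded invariant subset of $S_{V_0}$. For $V_0\in(-1,0)$ this means $p$ lies on the compact sphere-component $C_{V_0}$; for $V_0=-1$ the only available bounded invariant set is the fixed point $(0,0,0)$; and for $V_0<-1$ no such set exists, so the statement is vacuous. I therefore split the proof into the two nontrivial cases.

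In the case $V_0\in(-1,0)$, I compute the critical set of $F(x,y,z):=x^2+y^2+z^2-2xyz-1$: setting $\nabla F = 2(x-yz,\,y-xz,\,z-xy)=0$ yields only $(0,0,0)$ and the four cone points $P_1,\ldots,P_4$, with critical values $-1$ and $0$ respectively. Hence $V_0$ is a regular value of $F$, so $\nabla F(p)\neq 0$, and the submersion theorem provides an open box $B$ about $p$ together with local coordinates in which $F$ is the last coordinate; the slices $S_V\cap B$ are then smooth connected discs partitioning $B$. In the remaining case $V_0=-1,\ p=(0,0,0)$, the Hessian of $F$ at the origin is positive definite, so on a small ball $B$ about $0$ one has $F\in[-1,0)$ with $F=-1$ only at $0$, and the level sets of $F$ in $B$ are small nearly-spherical surfaces enclosing the origin.

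The main step, which I expect to be the only real obstacle, is to verify that in both cases the local slices $S_V\cap B$ belong to the compact component $C_V$ rather than to one of the non-compact components. I would do this by showing that the connected component of the open set $\{F<0\}\subset\mathbb R^3$ containing the origin is precisely $\bigcup_{V\in(-1,0)}C_V\cup\{0\}$; as a connected component of an open set it is automatically open in $\mathbb R^3$, and the continuity/compactness of the picture (together with $C_V\to\{0\}$ as $V\to -1^+$ and $C_V$ tending to the compact part of $S_0$ as $V\to 0^-$) ensures that a sufficiently small $B$ lies entirely inside it. Granting this, every point of $B$ is either the origin (a fixed point of $T$) or lies on some compact invariant sphere $C_V$, and in either case has bounded forward orbit under $T$. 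Taking $B_p:=B$ then completes the proof.
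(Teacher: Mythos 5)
Your proof is correct and follows essentially the same line as the paper: the type-$\mathbf{B}$ hypothesis forces $p$ onto the compact invariant component (or to the origin when $V_0=-1$), and the nearby level surfaces of $F$ foliate a neighborhood of $p$ by compact invariant spheres, every point of which trivially has bounded orbit. You are more explicit than the paper about the trichotomy in $V_0$ and about the local normal form coming from the critical points of $F$, but the topological step you defer (that the component of $\{F<0\}$ containing the origin equals $\bigcup_{V\in(-1,0)}C_V\cup\{0\}$) plays exactly the same role as the paper's unargued assertion that the bounded components vary continuously (analytically) with $V$.
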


\begin{proof}
If $p \in S_{V_0}$, ${V_0 < 0}$, is of type $\mathbf{B}$, then $p$ belongs to the bounded component of $S_V$. Since the bounded components depend continuously (in fact, analytically, since $I$ is analytic) on $V$, it follows that any sufficiently small neighborhood of $p$ is contained entirely in the bounded component of $\bigcup_{V_1 < V < V_2}S_V$, with $V_i < 0$ and $V_0 \in [V_1, V_2]$. In fact, the bounded components of $S_V$, $V < 0$, form a smooth two-dimensional foliation of the bounded three dimensional manifold obtained by taking their union.
\end{proof}

\subsubsection{Dynamics of $T$ on $S_{V > 0}$}\label{subsubsec:dynamics-away}

In the rest of this section, when we write $S_V$, we implicitly assume that $V > 0$. We'll also write $T_V$ for $T|_{S_V}$, and similarly $\widetilde{T}_{V}$ for $\widetilde{T}|_{S_V}$.

The dynamics of the trace map $T_V$ (or, equivalently, of the map $\widetilde{T}_V$) is rather complex. This complexity arises from the fact that $T_V$ satisfies Smale's Axiom A - a hallmark of chaos (see \cite{Smale1967} for the origins of this terminology). Indeed, we have

\begin{theorem}[M. Casdagli; D. Damanik and A. Gorodetski; S. Cantat\footnote{The special case of $V\geq 16$ was done by M. Casdagli in \cite{Casdagli1986}. D. Damanik and A. Gorodetski extended the result to all $V > 0$ sufficiently small in \cite{Damanik2009}. Finally, S. Cantat proved the result for all $V > 0$ in \cite{C}. (D. Damanik and A. Gorodetski, and S. Cantat obtained their results independently, and used different techniques.)}]\label{thm:hyperbolicity}

The set of all bounded {\rm (}forward and backward{\rm )} orbits of $S_V$ under $T_V$ coincides with the nonwandering set $\Lambda_V$. The set $\Lambda_V$ is a compact locally maximal $T_V$-invariant hyperbolic subset of $S_V$. The periodic points of $T_V$ form a dense subset of $\Lambda_V$. Topologically, $\Lambda_V$ is a Cantor set. There exists a point in $\Lambda_V$ whose forward semi-orbit is dense in $\Lambda_V$ {\rm (}i.e., $T_V$ is transitive on $\Lambda_V${\rm )}.
\end{theorem}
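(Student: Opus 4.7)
The plan is to prove this by a combination of three strategies corresponding to the three attributions, each covering a regime of $V$.  The unifying picture is that one first builds a hyperbolic invariant Cantor set $\Lambda_V \subset S_V$ by geometric construction, then identifies it with the set of bounded orbits and the nonwandering set, and finally reads off density of periodic points, Cantor structure, and transitivity from standard hyperbolic theory.

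I would begin with the soft parts that are essentially model-independent.  Let $B_V$ denote the set of points in $S_V$ with bounded full orbit under $T_V$; it is automatically $T_V$-invariant, and by Proposition~\ref{prop:escape} (applied to both $T$ and $T^{-1}$) it is closed in $S_V$.  Boundedness together with closedness gives compactness of $B_V$.  The inclusion $B_V\subset\Omega(T_V)$ (the nonwandering set) is trivial once one shows $B_V$ is nonempty and $T_V$-invariant; the reverse inclusion $\Omega(T_V)\subset B_V$ follows because any unbounded orbit escapes superexponentially, so no return to a fixed compact neighborhood is possible.  So the real content is to show $B_V=\Lambda_V$ is hyperbolic and to get its fine structure.

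The hyperbolicity argument is the main obstacle and splits into three regimes.  For $V\geq 16$ I would follow Casdagli and construct an explicit Markov partition on $S_V$.  The idea is that for large $V$ the trace map, restricted to the relevant region of $S_V$, is close to a product of two piecewise affine maps; one exhibits a rectangle $R\subset S_V$ whose image $T_V(R)$ meets $R$ in finitely many strips that stretch completely across $R$ in a prescribed unstable cone field, while $T_V^{-1}(R)\cap R$ stretches across $R$ in a stable cone field.  Verifying the cone conditions using the explicit form of $T$ gives a Smale horseshoe, and $\Lambda_V=\bigcap_{n\in\Z}T_V^n(R)$ is a locally maximal hyperbolic Cantor set on which $T_V$ is conjugate to a full shift.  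For small $V>0$ I would use the Damanik--Gorodetski perturbation scheme: the restriction of $T$ to $\mathbb{S}\subset S_0$ is semiconjugate to the cat map $\mathcal{A}$ via $\mathcal{F}$ (diagram \eqref{eq:semiconjugacy-cd}), which is Anosov on $\mathbb{T}^2$, so $\mathbb{S}$ minus the four singularities $P_i$ is a hyperbolic set for $T$.  The singularities are handled by passing to $\widetilde T=T^6$, which fixes each $P_i$, and using the normal hyperbolicity of the curves $\rho_i$ established in Lemma~\ref{lem:type-b-on-cones} to obtain hyperbolic behavior in a full neighborhood of $\mathbb{S}\cup\bigcup_i\rho_i$.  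Persistence of compact locally maximal hyperbolic sets under $C^1$-small perturbation, applied to the analytic family $V\mapsto T|_{S_V}$, then produces the desired $\Lambda_V$ for all sufficiently small $V>0$, varying continuously in the Hausdorff topology.  The interval of $V$ not covered by these two regimes is closed by Cantat's global argument, which exploits the algebraic/complex-analytic nature of $T$ acting on the family $\{S_V\}$ (viewed as relative character varieties) to propagate hyperbolicity across all $V>0$.

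Once hyperbolicity of $\Lambda_V$ is in hand, the remaining conclusions are standard consequences.  Local maximality is built into each of the three constructions (the intersection definition $\Lambda_V=\bigcap_n T_V^n(U)$).  Density of periodic points follows from the Anosov closing lemma together with transitivity: any nonwandering point in a hyperbolic set can be shadowed by a periodic orbit.  Transitivity on $\Lambda_V$ is established either by checking irreducibility of the Markov partition (in the Casdagli regime), or by transporting transitivity of the cat map through the semiconjugacy $\mathcal{F}$ and then using persistence under perturbation (in the Damanik--Gorodetski regime), and Cantat's argument handles the remaining parameters.  Finally, $\Lambda_V$ is a Cantor set because it is a locally maximal hyperbolic set on a surface with both nontrivial stable and unstable directions, hence locally homeomorphic to a product of two Cantor sets, and in particular totally disconnected, compact, perfect, and metrizable without isolated points.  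The real labor is concentrated in verifying the uniform cone conditions in each regime and in gluing the three regimes together across all $V>0$; that gluing, accomplished by Cantat, is the step I expect to be most delicate.
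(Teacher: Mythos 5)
The paper itself does not prove Theorem~\ref{thm:hyperbolicity}; it quotes it as a known result, with the footnote splitting the credit exactly along the lines of your proposal: Casdagli for $V\geq 16$ via an explicit Markov partition, Damanik--Gorodetski for small $V>0$ by perturbing off the Cayley cubic, and Cantat for all $V>0$ by algebro-geometric methods. Your outline of each of those three regimes and of how they fit together is broadly faithful to what the cited papers actually do, and the discussion of density of periodic points, transitivity, and local maximality as consequences of the horseshoe/basic-set structure is correct.

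There is, however, a genuine logical gap in your ``soft'' preliminary step. You write that the inclusion $B_V\subset\Omega(T_V)$ ``is trivial once one shows $B_V$ is nonempty and $T_V$-invariant.'' A compact, nonempty, $T_V$-invariant set need \emph{not} be contained in the nonwandering set: take $f(x)=x^2$ on $[0,1]$, a compact invariant set in which every point of the open interval $(0,1)$ is wandering. The easy inclusion is in fact the one you call easy afterwards, namely $\Omega(T_V)\subset B_V$ (any escaping orbit is wandering, and you should argue this for both $T_V$ and $T_V^{-1}$ to control the full orbit). The hard direction, $B_V\subset\Omega(T_V)$, has no shortcut: one must first show via the escape dynamics that the bounded-orbit set $B_V$ is exactly the maximal invariant set $\bigcap_{n\in\Z}T_V^n(R)$ of the horseshoe rectangle $R$, and then conclude $B_V\subset\Omega(T_V)$ from density of periodic points in that set. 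Since in your proposal you derive density of periodic points only at the end, the dependency chain as written is circular. Also a minor point: local maximality plus hyperbolicity plus $(1,1)$ splitting alone does not force the Cantor topology (a hyperbolic attractor on a surface is locally a Cantor set times an interval); the Cantor structure is really read off from the symbolic conjugacy to a full shift, which your Markov partition step does provide, so the conclusion is right but the justification you give is too abstract.
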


A few remarks are in order here, before we continue. First, by \textit{nonwandering set} $\Lambda_V\subset S_V$ we mean the set of those points $p\in S_V$ that satisfy the following property. For every open neighborhood $U$ of $p$ and for any $N\in\N$, there exists $n > N$ such that $T_V^n(U)\cap U\neq\emptyset$. Second, Axiom A diffeomorphisms are those for which the nonwandering set is compact and hyperbolic, and periodic points form a dense subset of the nonwandering set. That the set of points with bounded orbits coincide with the nonwandering set is not part of Axiom A requirements (for instance, such a requirement would force every Axiom A diffeomorphism on a compact manifold to be Anosov; that is, hyperbolic on the entire manifold).

Now suppose that $p\in S_V$ is a type-$\mathbf{B}$ point. Then there exists a point $\widetilde{p}\in S_V$, such that $\lim_{i\rightarrow\infty}T^{n_i}(p) = \widetilde{p}$, with $n_i\in\N$, $n_i\uparrow\infty$. The point $\widetilde{p}$ is easily seen to be nonwandering. In fact, all limit points of $\mathcal{O}^+_T(p)$ are nonwandering. It follows that for any open neighborhood $U$ of $\Lambda_V$, for all sufficiently large $n\in\N$, $T^n(p)\in U$. Since $\Lambda_V$ is locally maximal, take $U$ as in \eqref{part2_eq1}. Now suppose that for infinitely many $k\in \N$, $T^k(p)\notin U$. Then there exists a limit point of $\mathcal{O}_T^+(p)$ outside of $U$, hence outside of $\Lambda_V$, which cannot be. Hence there exists $N_0\in\N$ such that for all $k\geq N_0$, $T^k(p)\in U$; in particular, $T^{N_0 + 1}(p)\in U$, and so $T^{N_0}(p)\in T^{-1}(U)$. By induction, for all $k\in\N$, $T^{N_0}(p)\in T^{-k}(U)$. On the other hand, $\bigcap_{k\in\N}T^{-k}(U)$ is precisely $W^s(\Lambda_V)\cap U$ (see \eqref{part2_eq5}
for the definition of $W^s(\Lambda_V)$)---this follows from the general theory (see, for example, the references given in the opening sentence of Section \ref{a_1}). It follows that $p$ must belong to the stable manifold of some $q\in\Lambda_V$. In the notation of Appendix \ref{a1}, we have $p\in W^s(q)$.

Conversely, if $p\in W^s(q)$ for some $q\in\Lambda_V$, then by definition of the stable manifold, we have $\lim_{n\rightarrow\infty}\norm{T^n(p) - T^n(q)} = 0$. Since $q\in\Lambda_V$, $\mathcal{O}_T^+(q)$ is bounded, hence $p$ is of type $\mathbf{B}$. In other words, we have proved
\begin{coro}\label{cor:stable-manifold}
A point $p$ is a type-$\mathbf{B}$ point in $S_V$ if and only if there exists $q\in\Lambda_V$, such that $p\in W^s(q)$.
\end{coro}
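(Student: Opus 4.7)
The plan is to prove the two implications separately, relying on Theorem \ref{thm:hyperbolicity} together with standard facts from hyperbolic dynamics that are recorded in the appendix.

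For the easier direction ($\Leftarrow$), suppose $p \in W^s(q)$ for some $q \in \Lambda_V$. By definition of the stable manifold, $\|T^n(p) - T^n(q)\| \to 0$. Because $\Lambda_V$ is compact and $T$-invariant (Theorem \ref{thm:hyperbolicity}), the forward orbit $\mathcal{O}_T^+(q)$ is bounded, and therefore $\mathcal{O}_T^+(p)$ is bounded as well, i.e., $p$ is of type $\mathbf{B}$.

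For the harder direction ($\Rightarrow$), suppose $p \in S_V$ is of type $\mathbf{B}$. First I would observe that because $\mathcal{O}_T^+(p)$ is bounded in $S_V$, it possesses at least one accumulation point, and a routine argument (using that the same point is visited arbitrarily late by the orbit, so small neighborhoods return to themselves) shows that every accumulation point of $\mathcal{O}_T^+(p)$ is nonwandering, hence lies in $\Lambda_V$. Next, invoking local maximality of $\Lambda_V$, I would choose an open neighborhood $U$ of $\Lambda_V$ of the type guaranteed by \eqref{part2_eq1}, i.e., with $\bigcap_{n \in \Z} T^n(U) = \Lambda_V$. I would then claim that $T^n(p) \in U$ for all sufficiently large $n$: otherwise the orbit would visit the complement of $U$ infinitely often, producing an accumulation point outside $\Lambda_V$, contradicting the previous step. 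Fix such an $N_0$.

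Once the forward orbit is trapped in $U$ past time $N_0$, I would appeal to the standard identification $\bigcap_{k \geq 0} T^{-k}(U) = W^s(\Lambda_V) \cap U$ (a consequence of the local product structure for locally maximal hyperbolic sets, as referenced at the start of Section \ref{a_1}). This yields $T^{N_0}(p) \in W^s(\Lambda_V)$, whence $p \in W^s(\Lambda_V)$ by invariance of the stable set under $T$. Since $W^s(\Lambda_V)$ is the disjoint union of the stable manifolds $W^s(q)$ for $q \in \Lambda_V$, there exists a unique $q \in \Lambda_V$ with $p \in W^s(q)$, completing the proof.

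The main obstacle is not any single computation but rather the bookkeeping around the hyperbolic dynamics machinery: the precise choice of the fundamental neighborhood $U$ witnessing local maximality, and the identification of the stable set of $\Lambda_V$ with the intersection of preimages of $U$. Both of these are classical ingredients that I would cite from the appendix and from standard references rather than reproduce in detail.
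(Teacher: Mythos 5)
Your proof is correct and matches the paper's argument essentially step for step: the backward direction uses compactness and invariance of $\Lambda_V$ to bound the orbit of $q$ and hence of $p$; the forward direction identifies accumulation points of $\mathcal{O}_T^+(p)$ as nonwandering (hence in $\Lambda_V$), traps the tail of the orbit in a fundamental neighborhood $U$ from \eqref{part2_eq1}, and invokes the identification $\bigcap_{k \geq 0} T^{-k}(U) = W^s(\Lambda_V) \cap U$ to conclude $p \in W^s(q)$ for some $q \in \Lambda_V$. No meaningful deviation from the paper's route.
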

Observe that as a consequence the set of type-$\mathbf{B}$ points on $S_V$ carries the following geometry. It is a disjoint union of smooth one-dimensional injectively immersed connected submanifolds of $S_V$.

The result of Corollary \ref{cor:stable-manifold} and the corresponding geometry of type-$\mathbf{B}$ points have been applied in a number of papers investigating spectra of quasiperiodic Hamiltonians, starting with the pioneering work of M. Casdagli in \cite{Casdagli1986} and the following work of A. S\"ut\H{o} in \cite{S87} (see also \cite{DG} and references to earlier works therein). Let us remark that an explicit proof of Corollary \ref{cor:stable-manifold} isn't found in the aforementioned papers,  so, while it easily follows from general principles, we included one here for completeness.

While it was enough in those papers to consider the dynamics of $T_V$ for a fixed value of $V > 0$ and the corresponding type-$\mathbf{B}$ points, in our case, much like in \cite{Y2}, we have to consider type-$\mathbf{B}$ points in $\mathcal{M}$; that is, type-$\mathbf{B}$ points of $S_V$, for all $V > 0$ at once.

\subsubsection{Geometry of Type-$\mathbf{B}$ Points in $\mathcal{M}$}\label{subsubsec:type-b-geometry}

Let us now discuss the geometry of type-$\mathbf{B}$ points in $\mathcal{M}$. We have already seen above that on $S_V$, type-$\mathbf{B}$ points form a disjoint union of injectively immersed smooth one-dimensional connected submanifolds of $S_V$. Since $\mathcal{M}$ is smoothly foliated by the surfaces $\set{S_V}_{V > 0}$, it is natural to inquire whether the aforementioned one-dimensional manifolds form a meaningful geometric structure in $\mathcal{M}$, when viewed simultaneously for all $V>0$. To this end we have the following result, that originally appeared in \cite{Y}.

Before stating the theorem, we need to define
\begin{align}\label{eq:whole-omega}
\Lambda := \bigcup_{V > 0}\Lambda_V.
\end{align}
\begin{theorem}\label{thm:cs-manifolds}
There exists a family, denoted by $\mathcal{W}^s$, of smooth $2$-dimensional connected injectively immersed submanifolds of $\mathcal{M}$, whose members we denote by $W^{cs}$ and call \upshape center-stable manifolds, \itshape with the following properties.
\begin{enumerate}

\item The family $\mathcal{W}^s$ is $T$-invariant; that is, for any $W^{cs}\in\mathcal{W}^s$, $T(W^{cs})\in\mathcal{W}^s$;

\item For every $x\in\Lambda$, there exists a unique $W^{cs}(x)\in\mathcal{W}^s$ containing $x$;

\item Conversely, for every $W^{cs}\in\mathcal{W}^s$, there exists {\rm (}in fact many!{\rm )} $x\in\Lambda$ such that $x\in W^{cs}$;

\item For any $V > 0$ and any $W^{cs}\in\mathcal{W}^s$, $W^{cs}\cap S_V$ is precisely the stable manifold, $W^s(q)$, for some $q\in\Lambda_V$;

\item For every $W^{cs}\in\mathcal{W}^s$, $W^{cs}$ intersects $S_V$ for every $V > 0$, and this intersection is transversal {\rm (}though the angle of intersection may depend on $V$ and on the points along $S_V\cap W^{cs}${\rm )};

\item The type-$\mathbf{B}$ points of $\mathcal{M}$ are precisely $\bigcup_{W\in\mathcal{W}^s} W$.

\end{enumerate}
\end{theorem}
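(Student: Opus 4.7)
The plan is to view $T$ as a diffeomorphism of $\mathcal{M}$ preserving the smooth codimension-one foliation $\set{S_V}_{V>0}$, to exhibit $\Lambda=\bigcup_{V>0}\Lambda_V$ as a partially hyperbolic invariant set, and then to invoke the Hirsch--Pugh--Shub center-stable manifold theorem. \emph{Step 1 (partial hyperbolicity).} At each $p\in\Lambda_V$, Theorem \ref{thm:hyperbolicity} supplies a $DT$-invariant splitting $T_p S_V=E^s_p\oplus E^u_p$ with uniform contraction/expansion rates $\lambda<1<\mu$ on compact sub-pieces of $\Lambda$. Since $T$ preserves $I$, $DT$ preserves the rank-two subbundle $TS_V$, and the induced action on the rank-one normal quotient is neutral (growth rate $1$ modulo bounded distortion), because $T_V$ is area-preserving on $S_V$ while $\abs{\det DT}=1$ on $\R^3$. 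Absence of resonance between this neutral rate and the hyperbolic rates $\lambda,\mu$ lets one integrate the graph transform on the bundle of one-dimensional complements, yielding a $DT$-invariant line bundle $E^c$ complementary to $TS_V$ and the partially hyperbolic splitting $T_p\mathcal{M}=E^s_p\oplus E^c_p\oplus E^u_p$.

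\emph{Step 2 (construction of $\mathcal{W}^s$).} The Hirsch--Pugh--Shub theorem then produces, for each $x\in\Lambda$, a smooth, injectively immersed, locally unique two-dimensional center-stable manifold $W^{cs}_{\mathrm{loc}}(x)$ tangent to $E^s_x\oplus E^c_x$ at $x$. Globalizing by $W^{cs}(x):=\bigcup_{n\ge 0}T^{-n}(W^{cs}_{\mathrm{loc}}(T^n x))$ and taking $\mathcal{W}^s$ to be the family of all such sets immediately yields properties (1) and (2). For (4), fix $x\in\Lambda_{V_0}$; structural stability of the hyperbolic Cantor set $\Lambda_V$ in the parameter $V$ produces an analytic continuation $V\mapsto q(V)\in\Lambda_V$ with $q(V_0)=x$, tangent at each point to $E^c$, so $W^{cs}(x)$ contains $q(V)$ for every $V>0$. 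The intersection $W^{cs}(x)\cap S_V$ is then a one-dimensional $T$-invariant smooth submanifold of $S_V$ through $q(V)$, tangent there to $E^s_{q(V)}$, and thus coincides with the stable leaf $W^s(q(V))$.

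\emph{Step 3 (remaining properties).} Property (5) follows from (4) and Step 1: $E^c_{q(V)}\not\subset T_{q(V)}S_V$ forces transversality, and (4) provides non-empty intersection for every $V>0$. Property (3) follows from the local product structure of $\Lambda_V$, which makes $W^s(q)\cap\Lambda_V$ a Cantor set whose uncountably many points all lie on a common $W^{cs}$ by the uniqueness in (2). Property (6) is immediate from Corollary \ref{cor:stable-manifold} together with (4): on each $S_V$, the set of type-$\mathbf{B}$ points equals $\bigcup_{q\in\Lambda_V}W^s(q)=\bigcup_{W^{cs}\in\mathcal{W}^s}(W^{cs}\cap S_V)$.

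\emph{Main obstacle.} The crux is Step 1: the domination estimates must be uniform enough as $V$ ranges over all of $(0,\infty)$ to justify the graph-transform construction of $E^c$. The hyperbolic rates of Theorem \ref{thm:hyperbolicity} degenerate as $V\to 0^+$ (the Cayley cubic being singular; compare Lemma \ref{lem:type-b-on-cones}), and care is needed as $V\to\infty$ to prevent the neutral rate from colliding with the hyperbolic rates. In practice one first establishes the splitting and the center-stable family on each compact piece $\bigcup_{V\in[a,b]}\Lambda_V$, $0<a<b<\infty$, where the HPS machinery applies cleanly, and then patches these families together using the uniqueness in (2) to obtain the global $\mathcal{W}^s$.
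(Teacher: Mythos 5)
Your proposal follows the same broad blueprint as the paper's sketch: reduce the theorem to establishing partial hyperbolicity of $\bigcup_{V_0\le V\le V_1}\Lambda_V$, invoke the Hirsch--Pugh--Shub center-stable manifold theorem, globalize, and use structural stability in $V$ together with the local product structure of $\Lambda_V$ to verify items (3)--(6). Where you diverge is in the justification of partial hyperbolicity (Step 1). The paper's route is to observe (citing \cite{Y2}) that $T$ restricted to a compact slab $\bigcup_{V_0\le V\le V_1}S_V$ is \emph{smoothly conjugate to a skew product} of a surface hyperbolic diffeomorphism with the identity on an interval; in the skew-product model the center subbundle is explicitly $\partial_V$, so transversality of $E^c$ to the leaves $S_V$ is built in, and the Lipschitz dependence of the conjugacy $H_{V_0,V}$ then delivers item (5). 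You instead argue from $\abs{\det DT}=1$, the $T$-invariance of the Gelfand--Leray area form on each $S_V$, and hence neutrality of the normal rate, to get domination and then construct $E^c$ by a graph transform. This is a legitimate and somewhat more self-contained alternative, but two details deserve more care. First, what you call ``absence of resonance'' is really just domination (the neutral normal rate sandwiched strictly between the hyperbolic rates), and the relevant graph transform is not directly on the bundle of complements to $TS_V$ (that iteration has Lipschitz constant $\sim\mu$ forward, $\sim\lambda^{-1}$ backward, hence is not a contraction either way); one must first construct $E^{cs}$ as a complement to $E^u$ by backward iteration and $E^{cu}$ as a complement to $E^s$ by forward iteration, and then set $E^c=E^{cs}\cap E^{cu}$, after which the identity $E^s\oplus E^u=TS_V$ forces $E^c\cap TS_V=\{0\}$. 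Second, in Step~2 you assert that $V\mapsto q(V)$ is an analytic continuation tangent to $E^c$; structural stability alone gives only a topological conjugacy, and the paper claims only smoothness (via the characterization of $q(V)$ as $W^{cs}\cap W^{cu}$) and Lipschitz dependence of $H_{V_0,V}$ on $V$ --- which is all that is needed for transversality. Your closing ``Main obstacle'' paragraph correctly identifies the compact-slab-then-patch strategy, which is exactly how the paper handles the degeneration of rates as $V\to 0^+$ and $V\to\infty$. Overall the argument is sound; the trade-off is that the skew-product conjugacy yields partial hyperbolicity and transversality in one stroke, while your volume/area argument is more hands-on but needs the $E^{cs}\cap E^{cu}$ construction spelled out.
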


Notice in particular that statement (6) of Theorem \ref{thm:cs-manifolds} describes completely the geometry of type-$\mathbf{B}$ points of $\mathcal{M}$: it is a disjoint union of smooth two-dimensional injectively immersed connected submanifolds of $\mathcal{M}$.

A detailed proof of this theorem appears in \cite{Y}. For completeness we give here a rough sketch of main ideas.

\begin{proof}[Proof of Theorem \ref{thm:cs-manifolds} {\rm (}sketch{\rm )}]
By the fundamental results of hyperbolic dynamics that are discussed in Appendix \ref{a1}, for any $V_1 > V_0 > 0$, the dynamics of $T|_{\Lambda_{V_0}}$ is conjugate to that of $T|_{\Lambda_{V_1}}$. That is, there exists a homeomorphism $H_{V_0, V_1}$ such that the following diagram commutes:
\begin{align*}
\begin{CD}
\Lambda_{V_1} @>T>> \Lambda_{V_1}\\
@VH_{V_0,V_1}VV                       @VVH_{V_0,V_1}V\\
\Lambda_{V_0} @>T>>                      \Lambda_{V_0}
\end{CD}
\end{align*}
Moreover, $H_{V_0,V_1}$ is a unique homeomorphism with this property.

To prove existence and smoothness of the center-stable manifolds, by the results of \cite[Section 6]{Hirsch1977} it is enough to show that for any $V_1 > V_0 > 0$,
\begin{align*}
\bigcup_{V_0\leq V\leq V_1}\Lambda_V
\end{align*}
is partially hyperbolic (see Appendix \ref{a1} for definitions). But this follows since the dynamics of $T$ on
\begin{align*}
\bigcup_{V_0\leq V\leq V_1}S_V
\end{align*}
is smoothly conjugated to a skew product of a hyperbolic diffeomorphism on a surface with the identity map on an interval (for details, see \cite{Y2}).

To prove that the center-stable manifolds are transversal to the surfaces $\set{S_V}_{V > 0}$, notice that, for any fixed $x\in \Lambda_{V_0}$, and any $V\neq 0$, the curve
\begin{align*}
V\mapsto H_{V_0, V}(x)
\end{align*}
is smooth (indeed, it is the intersection of the center-stable manifold containing $x$, with the center-unstable manifold containing $x$, where the center-unstable manifolds are defined analogously to the center-stable ones for $T^{-1}$). Hence it is enough to show transversality of this curve with the invariant surfaces. This follows, since the homeomorphism $H_{V_0, V}$ depends Lipschitz-continuously on $V$ (see \cite{Y2} for details).
\end{proof}

\begin{figure}\label{Wcs}
\input{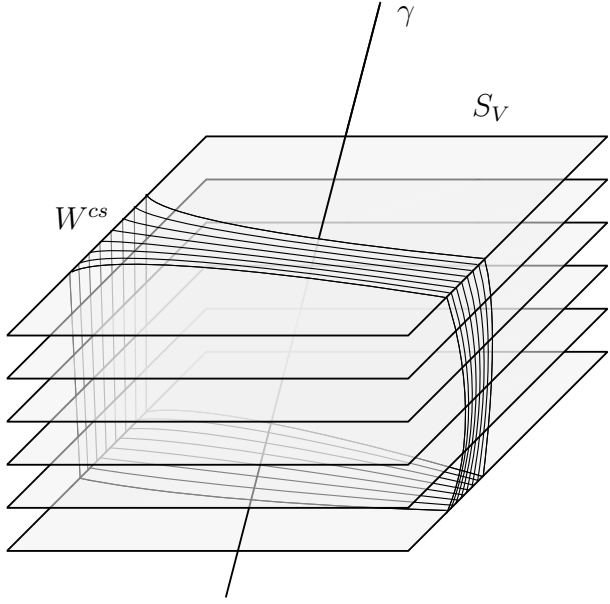}
\caption{Schematic illustration of the invariant surfaces, the curve of initial conditions, and the center-stable manifolds.}
\label{figure:wcs}
\end{figure}

The following (at first glance rather unmotivated) result will play a role in the spectral analysis of CMV matrices later.

\begin{prop}\label{prop:Barry-points}
There are no type-$\mathbf{B}$ points $(x,y,z)\in S_V$, such that for all $k$, all three coordinates of $T^k(x,y,z)$ remain greater than one in absolute value {\rm (}recall that we are assuming $V > 0${\rm )}.
\end{prop}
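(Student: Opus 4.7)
My plan is to argue by contradiction. Suppose $p \in S_V$ with $V > 0$ is a type-$\mathbf{B}$ point whose every forward iterate has all coordinates greater than $1$ in absolute value. By Corollary~\ref{cor:stable-manifold}, $p \in W^s(q)$ for some $q \in \Lambda_V$; since $d(T^k(p), T^k(q)) \to 0$ and $T^k(p)$ lies in the closed region $\overline{R} := \{(x,y,z) : |x|, |y|, |z| \geq 1\}$ for all $k \geq 0$, the $\omega$-limit set $\omega(q)$ is a nonempty compact $T$-invariant subset of $\Lambda_V \cap \overline{R}$. Picking any $r \in \omega(q)$, its full bi-infinite orbit lies in $\overline{R}$ and is bounded; reading off trace coordinates yields a bi-infinite sequence $(x_n)_{n \in \Z}$ satisfying $x_{n+1} = 2 x_n x_{n-1} - x_{n-2}$, with $|x_n| \geq 1$ and $I(x_{n+1}, x_n, x_{n-1}) = V > 0$ for every $n$. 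Ruling out such a sequence suffices.

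Set $X_n = |x_n|$, $\epsilon_n = \mathrm{sgn}(x_n)$, and $\tau_n = \epsilon_{n-2}\epsilon_{n-1}\epsilon_n$. A short sign analysis of the recursion shows that whenever $\tau_n = -1$ the two summands combine constructively in magnitude, giving $X_{n+1} = 2 X_n X_{n-1} + X_{n-2} \geq 2 X_n + 1$ while forcing $\tau_{n+1} = +1$. Combining this inflation estimate with bi-infinite boundedness of $(X_n)$ and the lower bound $|Q_n| \geq \sqrt{V}$ for the signed quantity $Q_n := x_n x_{n-1} - x_{n-2}$ (which follows from the general form $V = Q_n^2 - (x_n^2-1)(x_{n-1}^2-1)$ and controls how much $X_n$ can decay on intervening $\tau = +1$ stretches), I expect to show that $\tau_n = -1$ occurs only finitely often. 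Passing to a tail, I may thus assume $\tau_n \equiv +1$ and hence $X_{n+1} = 2 X_n X_{n-1} - X_{n-2}$. \emph{This sign reduction is the main technical obstacle I anticipate in the plan.}

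Under $\tau_n \equiv +1$, define $Y_n := X_n X_{n-1} - X_{n-2}$. Completing the square in the invariant identity gives
\begin{equation*}
V = Y_n^2 - (X_n^2 - 1)(X_{n-1}^2 - 1) \qquad \text{for every } n \in \Z,
\end{equation*}
and a direct calculation yields the forward recursion $Y_{n+1} = X_n Y_n + X_{n-1}(X_n^2 - 1)$; the second summand being non-negative (since $X_n \geq 1$), the condition $Y_n \geq 0$ is forward-invariant, producing a dichotomy. In case (i), $Y_n \geq 0$ for all large $n$; then $X_{n+1} = X_n X_{n-1} + Y_n \geq X_n X_{n-1}$ gives $\log X_{n+1} \geq \log X_n + \log X_{n-1}$, and since $V > 0$ excludes the flat solution $X_n \equiv 1$, the sequence $(\log X_n)$ grows at least at Fibonacci rate, contradicting boundedness. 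In case (ii), $Y_n < 0$ for every $n$; then $X_n X_{n-1} < X_{n-2}$ and in particular $X_n < X_{n-2}$, so the even- and odd-indexed subsequences of $(X_n)$ are strictly decreasing and bounded below by $1$, with respective limits $A, B \geq 1$. Passing to the limit in $Y_n < 0$ along each parity gives $A(B-1) \leq 0$ and $B(A-1) \leq 0$, forcing $A = B = 1$. Letting $n \to \infty$ in the displayed identity then yields $V = 0$, contradicting $V > 0$. Either case produces the desired contradiction.
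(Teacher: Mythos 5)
Your plan takes a genuinely different route from the paper. The paper's proof works directly on the nonwandering set: it quotes \cite[Corollary~4.1]{Rob} to the effect that the open region $\mathcal{R}=\set{|x|,|y|,|z|>1}$ contains no periodic points, uses density of periodic points in $\Lambda_V$ to place $\Lambda_V\subset\mathcal{R}^c$, eliminates the boundary cases (a coordinate equal to $\pm 1$) by hand via Lemma~\ref{lem:escape-cond}, and then invokes Corollary~\ref{cor:stable-manifold}. Your reduction via an $\omega$-limit orbit to a bi-infinite bounded sequence $(x_n)$ with $|x_n|\ge 1$, followed by the sign bookkeeping $\tau_n$ and the identity $V=Y_n^2-(X_n^2-1)(X_{n-1}^2-1)$, is a self-contained alternative; the $\omega$-limit step, the identity, the forward-invariance of $\set{Y_n\ge 0}$, and both terminal cases (i) and (ii) are correct as stated.

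However, the sign reduction you flag is a real gap, and the sketch offered does not close it: the inflation estimate and $\tau_n=-1\Rightarrow\tau_{n+1}=+1$ are right, but bi-infinite boundedness together with $|Q_n|\ge\sqrt V$ do not obviously preclude $\tau_n=-1$ recurring at sparse times with sufficient decay on the intervening $\tau\equiv+1$ stretches; some quantitative control of that decay is needed and not supplied. A clean way to bypass the issue, staying close to your calculation, is to apply Lemma~\ref{lem:escape-cond} along the original forward orbit: since every iterate lies in $\mathcal{R}$ and the orbit does not escape, $X_{k+1}X_k<X_{k-1}$ for all $k\ge 0$. This directly produces the monotone-in-each-parity decay of your case~(ii) with limits $A=B=1$. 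Since $\tau_k=-1$ forces $\tau_{k+1}=+1$, there are infinitely many $k$ with $\tau_{k+1}=+1$, and along those indices $V=X_{k+1}^2+X_k^2+X_{k-1}^2-2X_{k+1}X_kX_{k-1}-1\to 0$, contradicting $V>0$. This removes the need for both the sign reduction and the case split.
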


\begin{proof}
For convenience, let us call the region of interest $\mathcal{R}$:
\begin{align*}
\mathcal{R} = \set{(x,y,z)\in\R^3: \abs{x},\abs{y},\abs{z}> 1}.
\end{align*}
We wish to show that if $p$ is a type-$\mathbf{B}$ point in $S_V$, then for some $k\geq 1$, $T^k(p)\in \mathcal{R}^c$.

Observe that, according to \cite[Corollary 4.1]{Rob}, $\mathcal{R}$ does not contain any periodic points of $S_V$. Since periodic points are dense in $\Lambda_V$, we have $\Lambda_V\subset \mathcal{R}^c$. Let us now prove that for every $(x,y,z)\in\Lambda_V$, at least one of the three components is strictly smaller than one in absolute value. This amounts to checking a few cases.

First observe that the singularities $\set{P_i}_{i=1,\dots,4}$ do not qualify for consideration, since they lie on the Cayley cubic $S_0$. Before we continue checking the other cases, let us mention the following sufficient condition for escape, which we shall use here and later.

\begin{lemma}[{Sufficient Condition for Escape---see \cite[Proof of Proposition 5]{BR2}}]\label{lem:escape-cond}

If $\abs{x},\abs{y} > 1$ and $\abs{xy}\geq \abs{z}$, then the point $(x,y,z)$ is not of type $\mathbf{B}$. Hence, by the reversing symmetry of $T$ {\rm (}see \eqref{eq:rev-sym}{\rm )}, if $\abs{y},\abs{z}>1$ and $\abs{yz}\geq\abs{x}$, then $(x,y,z)$ escapes under the action of $T^{-1}$.
\end{lemma}

Let us now consider the points $(1,1,-1)$, $(1,-1,1)$ and $(-1,1,1)$. After iterating each of the three points forward or backward, we get:
\begin{align*}
T^2(1,1,-1) = (5,3,1),\hspace{2mm} T^2(1,-1,1) = (-7,-3,1),\hspace{2mm} T^{-2}(-1,1,1) = (1,3,5),
\end{align*}
all satisfying sufficient condition for escape either in forward or in backward time. Hence these three points do not belong to $\Lambda_V$. It remains to check for points of the form $(a, b, \pm 1)$, $(a, \pm 1, b)$ and $(\pm 1, a, b)$, where at least one of $\abs{a}, \abs{b}$ is larger than one, and neither is smaller than one.

If $\abs{a},\abs{b}>1$, then $(a,b,\pm 1)$ and $(\pm 1, a, b)$ cannot belong to $\Lambda_V$ by Lemma~\ref{lem:escape-cond}. Similarly, depending on whether $\abs{a}\geq\abs{b}$ or $\abs{b} \geq \abs{a}$, either $T(a, \pm 1, b)$ or $T^{-1}(a, \pm 1, b)$ escapes by Lemma~\ref{lem:escape-cond}, so $(a, \pm 1, b)\notin\Lambda_V$.

It remains to check for points of the form $(a, \pm 1, \pm 1)$, $(\pm 1, a, \pm 1)$ and $(\pm 1, \pm 1, a)$ with $\abs{a}>1$. We omit the necessary computations here, remarking that one follows exactly the same procedure as above.

Now, let $U$ be a neighborhood of $\Lambda_V$ such that every point of $U$ has at least one coordinate strictly smaller than one in absolute value. Say $p$ is type-$\mathbf{B}$ in $S_V$. Since $p$ lies on the stable manifold to some $q\in\Lambda_V$, for all sufficiently large $n\in\N$, $T^n(p)\in U$. This completes the proof.
\end{proof}

\subsection{Model-Independent Implications}\label{subsec:model-independent}

In this section we give a generalized, model-independent discussion of techniques that have been useful in spectral analysis of quasiperiodic Schr\"odinger and Jacobi operators (and now also CMV matrices). Indeed, this generalization is motivated by a rather persistent geometric scheme (see \cite{Casdagli1986}, \cite{DG} and references therein, \cite{Y2} and references therein). We then apply the results of this section to derive a topological, measure-theoretic and fractal-dimensional description of the essential support of the spectra of CMV matrices with Fibonacci Verblunsky coefficients.

\subsubsection{Dynamical Spectrum: Definitions and Basic Results}\label{subsubsec:ds-basic}

Given a subset $D$ of $\R^3$, we define the \textit{dynamical spectrum} of $D$, denoted by $B_\infty(D)$, as the set of those points of $D$ that have property $\mathbf{B}$:
\begin{align}\label{eq:ds}
B_\infty(D) := \set{p\in D: p\text{ is a type-}\mathbf{B}\text{ point}}.
\end{align}
In the rest of this section and in the next section, we shall be concerned with topological, measure-theoretic and (rather nontrivial) fractal-dimensional properties of dynamical spectra of compact analytic curves in $\mathcal{M}$.

For the rest of this section (and, in fact, for the rest of the paper) we assume that $\gamma$ is a compact analytic curve injectively immersed in $\R^3$.

We begin with the following fundamental result.

\begin{lemma}\label{lem:finite-tangencies}
Suppose $\gamma \subset \mathcal{M}$. If $\gamma$ does not lie entirely in a single center-stable manifold, then it has at most finitely many tangential intersections with the center-stable manifolds, while all other intersections are transversal.
\end{lemma}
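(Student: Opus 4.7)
The plan is to argue by contradiction. Suppose $\gamma$ is not contained in a single center-stable manifold yet has infinitely many tangential intersections with members of $\mathcal{W}^s$. By compactness of $\gamma$, these tangency points accumulate at some $p^{\ast}\in\gamma$. I would first verify that $p^{\ast}$ is itself a type-$\mathbf{B}$ point, and in fact lies on a specific center-stable manifold $W^{cs}_{\ast}=W^{cs}(q^{\ast})$ with $q^{\ast}\in\Lambda$; this follows from Theorem~\ref{thm:cs-manifolds}(6) together with the continuous dependence of the conjugacies $H_{V_0,V}$ on $V$ sketched in the proof of that theorem.

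Next I would pass to a small open neighborhood $U\subset\mathcal{M}$ of $p^{\ast}$ and invoke the skew-product structure used in the sketch of Theorem~\ref{thm:cs-manifolds}: the center-stable manifolds meeting $U$ extend to a smooth foliation $\mathcal{F}$ of $U$ by $2$-dimensional leaves (obtained by pulling back the smooth stable foliation of the uniformly hyperbolic surface factor, and crossing with the $V$-interval). Choosing coordinates $(u,v,V)$ on $U$ in which $S_V=\{V=\mathrm{const}\}$ and the leaves of $\mathcal{F}$ are graphs $\{v=\phi(u;V_0)\}$ with $\phi$ smooth in both arguments, I would parameterize $\gamma\cap U$ analytically as $t\mapsto(u(t),v(t),V(t))$. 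The tangency condition at $t$ then becomes
\begin{equation*}
v(t)=\phi\bigl(u(t);V(t)\bigr),\qquad v'(t)=\phi_u\bigl(u(t);V(t)\bigr)\,u'(t),
\end{equation*}
and by assumption these equations hold at infinitely many $t$ accumulating at $t^{\ast}:=\gamma^{-1}(p^{\ast})$.

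The crucial step is to deduce from this accumulation that $\gamma$ coincides with $W^{cs}_{\ast}$ on an open arc about $p^{\ast}$. This combines two ingredients: first, each fixed center-stable manifold is a real-analytic submanifold of $\mathcal{M}$, because stable manifolds of hyperbolic sets for the polynomial diffeomorphism $T$ are real-analytic and the assembly $\bigcup_V W^s\bigl(H_{V_0,V}(q^{\ast})\bigr)$ is analytic in $V$; second, the foliation $\mathcal{F}$ is Lipschitz (in fact better) in the transverse direction. Combined, they force the accumulating tangency leaves $W^{cs}_n$ to converge to $W^{cs}_{\ast}$ strongly enough that the analytic jets of $\gamma$ and $W^{cs}_{\ast}$ match to infinite order at $p^{\ast}$; by real-analyticity of both, $\gamma$ agrees with $W^{cs}_{\ast}$ on the arc-component through $p^{\ast}$, and analytic continuation along the compact connected curve $\gamma$ upgrades this to $\gamma\subset W^{cs}_{\ast}$, contradicting the hypothesis. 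The remaining intersections of $\gamma$ with members of $\mathcal{W}^s$ are automatically transversal by dimension count, since $\dim\gamma+\dim W^{cs}=1+2=3=\dim\mathcal{M}$.

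The main obstacle is this last step, namely upgrading the accumulation of zeros of a merely smooth equation to containment of an analytic curve in a single analytic leaf. The local foliation $\mathcal{F}$ need not be real-analytic in the transverse parameter $V$; only each individual leaf is. The resolution exploits precisely this leaf-wise analyticity, which guarantees that a tangency of $\gamma$ with a single fixed $W^{cs}$ is either of finite order (hence isolated along $\gamma$) or forces $\gamma\subset W^{cs}$; the accumulation argument must then be leveraged into infinite-order contact with the one leaf $W^{cs}_{\ast}$, rather than scattering low-order tangencies across many distinct leaves, and it is exactly here that the regularity of $\mathcal{F}$ in the transverse direction is needed.
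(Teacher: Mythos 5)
Your argument identifies the right obstruction but does not overcome it, and you acknowledge as much in your closing paragraph. The paper's proof is essentially a citation: it invokes Lemma~6.4 of Bedford--Lyubich--Smillie \cite{Bedford1993} to conclude that tangencies of the analytic curve $\gamma$ with the (analytic) center-stable manifolds are isolated, and then compactness of $\gamma$ gives finiteness; the analyticity of the individual center-stable manifolds, on which you also rely, is itself nontrivial and is deferred to the forthcoming work \cite{GorodetskiXXXX}. Your proof is an attempt to reconstruct the content of that cited lemma rather than an alternative route around it, and it stops exactly at the point where the lemma is needed.

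The concrete gap is the passage from accumulating tangencies with \emph{distinct} leaves $W^{cs}_n$ to infinite-order contact of $\gamma$ with the limiting leaf $W^{cs}_*$. If infinitely many of the tangency points $p_n$ lay on a single fixed leaf $W^{cs}_*$, leaf-wise real-analyticity would finish the job: the analytic set $\gamma\cap W^{cs}_*$ would have an accumulation point in $\gamma$, forcing $\gamma\subset W^{cs}_*$. But when the $W^{cs}_n$ are pairwise distinct, $C^1$-convergence $W^{cs}_n\to W^{cs}_*$ only yields first-order contact of $\gamma$ with $W^{cs}_*$ at $p^*$; upgrading this to infinite-order contact requires genuine higher-order control of the foliation in the transverse direction, which the merely Lipschitz dependence of $H_{V_0,V}$ on $V$ --- the only transverse regularity the paper establishes --- does not supply. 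You flag this as ``the main obstacle,'' but the paragraph you present as a resolution only restates it (``it is exactly here that the regularity of $\mathcal{F}$ in the transverse direction is needed'') without specifying what regularity is available or how it closes the gap. The side claim that the assembly $\bigcup_V W^s\bigl(H_{V_0,V}(q^*)\bigr)$ is ``analytic in $V$'' is likewise asserted without justification and is precisely the delicate point relegated to \cite{GorodetskiXXXX}. Without Bedford--Lyubich--Smillie's Lemma~6.4, or an argument actually reproducing its content, the proof is incomplete.
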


\begin{proof}
This result follows by application of \cite[Lemma 6.4]{Bedford1993} to guarantee that tangential intersections are isolated and hence, by compactness of $\gamma$, there cannot be infinitely many of them. Indeed, to apply \cite[Lemma 6.4]{Bedford1993}, one only needs analyticity of $\gamma$ and of the center-stable manifolds; the former is one of our current hypotheses, while the latter will be proved in the forthcoming paper \cite{GorodetskiXXXX} (for a similar result in the context of Anosov diffeomorphisms, see \cite{delaLlave1986}).
\end{proof}

From Section \ref{subsec:dynamics-prelims}, we know that the dynamical spectrum of $\gamma$ is precisely the set of intersections of $\gamma$ with the center-stable manifolds:

\begin{align*}
B_\infty(\gamma) = \bigcup_{W^{cs}\in\mathcal{W}^s}\gamma\cap W^{cs}.
\end{align*}

If $\gamma$ lies entirely on a center-stable manifold, then $B_\infty(\gamma) = \gamma$. On the other hand, if $\gamma$ intersects center-stable manifolds only tangentially, then, by Lemma \ref{lem:finite-tangencies}, $B_\infty$ is a finite set. Also, $B_\infty$ is finite if $\gamma$ intersects the center-stable manifolds only at the endpoints of $\gamma$. In this case all is known about $B_\infty$. Our next result handles the other, far less trivial case:

\begin{theorem}\label{thm:cantor-geometric}
Suppose $\gamma$ contains a transversal intersection with a center-stable manifold away from the endpoints of $\gamma$. Then $B_\infty(\gamma)$ is a Cantor set, together with {\rm (}possibly{\rm )} finitely many isolated points. If $B_\infty(\gamma)$ contains these isolated points, then they necessarily arise as tangential intersections of $\gamma$ with the center-stable manifolds.
\end{theorem}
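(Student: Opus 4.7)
The strategy is to decompose $B_\infty(\gamma)$ as a (nonempty) Cantor set together with a finite set of isolated points coming from the tangencies of $\gamma$ with $\mathcal{W}^s$. Concretely, I will verify (i) $B_\infty(\gamma)$ is compact, (ii) the set $F$ of tangential intersections of $\gamma$ with $\mathcal{W}^s$ is finite and absorbs all isolated points of $B_\infty(\gamma)$, (iii) $B_\infty(\gamma)\setminus F$ is perfect, (iv) $B_\infty(\gamma)$ has empty interior in the $1$-manifold $\gamma$. Since a closed subset of a $1$-manifold with empty interior is totally disconnected, (i)--(iv) together identify $B_\infty(\gamma)\setminus F$ as a Cantor set.

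For (i), I show the escape set is open in $\gamma$: if $p\in\gamma$ is not type-$\mathbf{B}$, Proposition \ref{prop:escape} drives $T^n(p)$ to infinity superexponentially in every coordinate, so some iterate $T^N(p)$ verifies the escape criterion of Lemma \ref{lem:escape-cond} with open margin, and continuity of $T^N$ then propagates escape to a whole $\gamma$-neighborhood of $p$.

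For (ii) and (iii), the hypothesized transversal intersection forbids $\gamma$ from lying inside a single center-stable manifold, so Lemma \ref{lem:finite-tangencies} delivers the finite set $F$. Now take $p\in B_\infty(\gamma)\setminus F$; by Corollary \ref{cor:stable-manifold} and Theorem \ref{thm:cs-manifolds}, $p\in \gamma\pitchfork W^{cs}(q)$ for some $q\in\Lambda_V$, $V>0$. Theorem \ref{thm:hyperbolicity} says $\Lambda_V$ is a perfect Cantor set, so I pick $q_n\to q$ with $q_n\in\Lambda_V\setminus\{q\}$; $C^1$-continuity of the stable lamination in the base point plus stability of transversal intersections yields $p_n\in\gamma\cap W^{cs}(q_n)\subset B_\infty(\gamma)$ with $p_n\to p$. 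Hence no transversal intersection point of $B_\infty(\gamma)$ is isolated, which proves (iii) and forces every isolated point of $B_\infty(\gamma)$ to lie in $F$, establishing the last sentence of the theorem.

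For (iv), I exploit the \emph{gaps} of the Cantor set $\Lambda_V$: between the $q_n$ and $q$ there are open unstable sub-arcs of $S_V$ disjoint from $\Lambda_V$; by the local product structure of the hyperbolic set $\Lambda_V$ and the smooth skew-product structure on $\bigcup_{V\in[V_1,V_2]}\Lambda_V$ from the proof sketch of Theorem \ref{thm:cs-manifolds}, these gaps lift to open slabs in $\mathcal{M}$ that meet no $W^{cs}$, i.e., consist entirely of escape points of $\mathcal{M}$. Transversality of $\gamma$ to $W^{cs}(q)$ at $p$ then forces $\gamma$ to cross such slabs in open sub-arcs accumulating at $p$, supplying escape points of $\gamma$ arbitrarily close to $p$. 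Together with (i)--(iii) and the obvious nonemptiness ($p$ itself is already type-$\mathbf{B}$ by Theorem \ref{thm:cs-manifolds}(6)), this completes the Cantor-set verification. The main obstacle is step (iv): rigorously passing from ``gap of $\Lambda_V$'' to ``open escape slab of $\mathcal{M}$ meeting $\gamma$ in an arc'' requires combining the local product structure on each $S_V$ with the smooth $V$-dependence of the hyperbolic set, rather than invoking anything that earlier sections have already packaged as a black box; everything else in the argument is standard once this transverse-Cantor geometry is in hand.
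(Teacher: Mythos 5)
Your overall strategy (compact $+$ perfect $+$ empty interior $\Rightarrow$ Cantor) is a legitimate alternative to the paper's route, which instead builds an auxiliary transversal $\tau \subset S_{V_p}$ through $p$ and a holonomy homeomorphism $\mathfrak{h}\colon B_\infty(\gamma)\cap U_p \to B_\infty(\tau)\cap U_p$ along the center-stable lamination. The advantage of the paper's argument is that the homeomorphism transfers the Cantor structure of $B_\infty(\tau)\cap U_p$ (inherited directly from the Cantor set $\Lambda_{V_p}$ on a single slice $S_{V_p}$) in one step, so perfectness and total disconnectedness come for free, and the $V$-varying ``gap slab'' construction you flag as your hardest step (iv) is completely avoided. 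Your step (iii) is essentially the same density/continuity argument the paper uses, and your step (i) is the paper's observation that type-$\mathbf{B}$ points form a closed set.

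There is, however, a genuine bookkeeping gap in your decomposition. You set $F$ equal to \emph{all} tangential intersections and claim that $B_\infty(\gamma)\setminus F$ is a Cantor set. But $F$ may contain tangencies that are \emph{not} isolated points of $B_\infty(\gamma)$, and removing such a non-isolated point leaves a set that fails to be closed (its accumulation point has been deleted), hence cannot be a Cantor set. The theorem requires you to remove only the set $I$ of \emph{isolated} points, which is a (possibly proper) subset of $F$; the Cantor piece is $B_\infty(\gamma)\setminus I$, and it contains the non-isolated tangencies. The paper treats this case explicitly: if $q$ is a non-isolated tangency, then by Lemma~\ref{lem:finite-tangencies} a punctured neighborhood of $q$ in $\gamma$ meets $\mathcal{W}^s$ only transversally, so the Cantor structure established at transversal points extends by taking a closure that picks up $q$. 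Once you replace $F$ by $I$ and add this closure step, your argument goes through; as written, the decomposition is incorrect whenever $B_\infty(\gamma)$ has a non-isolated tangency.
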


As an immediate corollary of Lemma \ref{lem:finite-tangencies} together with Theorem \ref{thm:cantor-geometric}, we obtain:

\begin{coro}\label{cor:cantor-geometric}
If $\gamma\subset\mathcal{M}$ does not lie entirely in a center-stable manifold, and if $B_\infty(\gamma)$ is infinite, then it is a Cantor set, together with {\rm (}possibly{\rm )} finitely many isolated points; these isolated points, if they exist, are necessarily points of tangency of $\gamma$ with the center-stable manifolds.
\end{coro}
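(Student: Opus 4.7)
The plan is to derive the corollary as a direct combination of the two preceding results, the only point requiring care being the verification that the hypothesis of Theorem \ref{thm:cantor-geometric} (an interior transversal intersection) follows from the corollary's hypothesis (non-containment in a single center-stable manifold together with infinitude of $B_\infty(\gamma)$).

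First, since $\gamma \subset \mathcal{M}$ is a compact analytic curve that does not lie entirely in any single center-stable manifold, Lemma \ref{lem:finite-tangencies} applies. It gives that the set of points of tangential intersection of $\gamma$ with center-stable manifolds is finite, while all remaining intersections are transversal. Recall also from the discussion preceding the corollary that
\[
B_\infty(\gamma) \;=\; \bigcup_{W^{cs} \in \mathcal{W}^s} \gamma \cap W^{cs},
\]
so $B_\infty(\gamma)$ is exactly the union of the tangential and transversal intersections.

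Next, since $B_\infty(\gamma)$ is infinite by assumption, and the tangential intersections form a finite subset of it, there must be infinitely many transversal intersections. As $\gamma$ has only two endpoints, the pigeonhole principle forces infinitely many, and in particular at least one, transversal intersection with a center-stable manifold to occur at an interior point of $\gamma$. This is precisely the hypothesis of Theorem \ref{thm:cantor-geometric}.

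Applying Theorem \ref{thm:cantor-geometric} therefore yields that $B_\infty(\gamma)$ is a Cantor set together with (possibly) finitely many isolated points, and that any such isolated points necessarily occur as tangential intersections of $\gamma$ with the center-stable manifolds, completing the proof. The only step that is not entirely automatic is the pigeonhole observation just noted; everything else is bookkeeping from the two cited results.
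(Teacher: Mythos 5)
Your proof is correct and takes essentially the same approach as the paper, which simply asserts the corollary as an immediate consequence of Lemma~\ref{lem:finite-tangencies} and Theorem~\ref{thm:cantor-geometric}; you have merely made explicit the (short) pigeonhole step that the paper leaves implicit.
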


\begin{remark}
Before we continue, let us remark that while isolated points necessarily arise as tangencies, a tangency does not necessarily produce an isolated point; we shall comment further on this in the proof of Theorem \ref{thm:cantor-geometric} below.
\end{remark}

\begin{proof}[Proof of Theorem \ref{thm:cantor-geometric}]

We first show that isolated points, if such exist, arise necessarily as tangential intersections. Indeed, suppose $p\in B_\infty(\gamma)$ is a point of transversal intersection of $\gamma$ with a center-stable manifold, and $p$ is not an endpoint of $\gamma$. Let us call the center stable manifold intersecting $\gamma$ at $p$, $\mathcal{W}(p)$. Assume also that $p\in S_{V_p}$, $V_p > 0$. Take a smooth curve $\tau\subset S_{V_p}$ passing through $p$ and transversal to $\mathcal{W}(p)\cap S_{V_p}$ (recall: $\mathcal{W}(p)\cap S_{V_p}$ is a stable manifold to some $q\in\Lambda_{V_p}$). Since the stable manifolds to points in $\Lambda_{V_p}$ form a continuous lamination (continuous in the sense of $C^1$-topology---see Section \ref{a_1_1}), $\tau$ intersects all stable manifolds transversally in a sufficiently small neighborhood $U_p$ of $p$, and hence is transversal to the center-stable manifolds in $U_p$.

With $\mathcal{W}^s$ denoting the family of center-stable manifolds, as in Theorem \ref{thm:cs-manifolds}, and $\mathcal{W}^s\cap U_p$ - the center-stable manifolds inside $U_p$, define the following map
\begin{align}\label{eq:points-to-cs}
\mathcal{G}: B_\infty(\tau)\cap U_p \longrightarrow\mathcal{W}^s\cap U_p
\end{align}
by
\begin{align*}
\mathcal{G}(x) = \mathcal{W}(x).
\end{align*}
Here $\mathcal{W}(x)$ is the part inside $U_p$ of the center-stable manifold that contains $x$.

The map $\mathcal{G}$ is continuous (see the discussion and references in Section \ref{a_1_1}) in the sense that there exists a family of smooth embeddings $\set{\mathcal{E}_{x}}_{x\in B_\infty(\tau)\cap U_p}$ of the unit disc $\mathbb{D}$ into $\R^3$, such that $\mathcal{E}_x(\mathbb{D}) = \mathcal{W}(x)$, and these embeddings depend continuously on $x$ in the $C^1$-topology (actually, in the $C^k$-topology for any $k\in\N$, but $k = 1$ is sufficient for our means). Since $\gamma$ intersects $\mathcal{W}(p)$ transversally, it follows that if $U_p$ is sufficiently small, for all $x\in B_\infty(\tau)\cap U_p$, $\gamma$ intersects $\mathcal{G}(x)$ (and hence \textit{all} of the center-stable manifolds inside $U_p$) transversally. It follows that the holonomy map
\begin{align}\label{eq:holonomy-along-cs}
\mathfrak{h}: B_\infty(\gamma)\cap U_p\longrightarrow B_\infty(\tau)\cap U_p
\end{align}
defined by projecting points along the center-stable manifolds is well-defined and is in fact a homeomorphism. On the other hand, since $\Lambda_{V_p}$ is a Cantor set (see Theorem \ref{thm:hyperbolicity}), it follows that $B_\infty(\tau)\cap U_p$ is also a Cantor set; hence
\begin{align}\label{eq:sentence}
B_\infty(\gamma)\cap U_p\hspace{2mm}\text{ contains neither isolated points nor connected components.}
\end{align}

Now let us show that away from isolated points, $B_\infty(\gamma)$ is a Cantor set.

Notice that the set of type-$\mathbf{B}$ points in $\mathcal{M}$ is a closed set. For details see, for example, \cite[Lemma 3.4]{Y}. Hence away from the finitely many isolated points, $B_\infty(\gamma)$ is compact. At points of transversal intersection, \eqref{eq:sentence} holds. Now, if $q$ is a point of tangential intersection (but not an isolated point in $B_\infty(\gamma)$), then on a sufficiently small neighborhood of $q$, $q$ is the only tangential intersection, by Lemma \ref{lem:finite-tangencies}. Hence for all points $p$ sufficiently close to $q$, \eqref{eq:sentence} holds, and we are done.
\end{proof}

As we mentioned earlier, the discussion above of dynamical spectra of analytic curves is inspired by a recurrent geometric scheme in spectral analysis of quasiperiodic (particularly, Fibonacci) Hamiltonians. It turns out that spectra of those Hamiltonians (or essential spectra of CMV matrices) correspond to dynamical spectra of analytic curves that satisfy the hypothesis of Theorem \ref{thm:cantor-geometric} (or, equivalently, of Corollary \ref{cor:cantor-geometric}). In this case, $B_\infty$ is a fractal, and more fine tuned analysis of $B_\infty$ is required. We do this next.

\subsubsection{Dynamical Spectrum: Fractal Dimensions}\label{subsubsec:ds-fd}

For the remainder of this section, we assume that $\gamma\subset\mathcal{M}$ satisfies the hypothesis of Theorem \ref{thm:cantor-geometric}.

The following three theorems give a further qualitative description of the fractal nature of $B_\infty(\gamma)$.

Before we continue, let us set up and fix for the remainder of this paper the following notation. The Hausdorff dimension of a set $A$ will be denoted by $\hdim(A)$. The local Hausdorff dimension of $A\subset\R$ at $a\in A$ is defined and denoted by
\begin{align*}
\lhdim(A, a) := \lim_{\epsilon\rightarrow 0}\hdim((a - \epsilon, a + \epsilon)\cap A).
\end{align*}
The box-counting dimension of $A$ (when it exists) is denoted by $\bdim(A)$, and the local box-counting dimension of $A\subset\R$ at $a\in A$ is defined analogously, and is denoted by $\lbdim(A, a)$. We denote the lower and upper box-counting dimensions of $A$ by, respectively, $\underline{\bdim}(A)$ and $\overline{\bdim}(A)$.

\begin{theorem}\label{thm:horseshoe-dim}
Take $a\in B_\infty(\gamma)$ and assume that $a\in S_{V_a}$. Suppose that $a$ is not an isolated point of $B_\infty(\gamma)$. Then
\begin{align}\label{eq:horseshoe-dim}
\lhdim(B_\infty(\gamma), a) = \frac{1}{2}\hdim(\Lambda_{V_a}),
\end{align}
where $\Lambda_{V_a}$ is the nonwandering set on $S_{V_a}$ from Theorem \ref{thm:hyperbolicity}.
\end{theorem}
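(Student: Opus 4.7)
The plan is to reduce the one-dimensional Hausdorff dimension computation on $\gamma$ to a purely two-dimensional problem on $S_{V_a}$ via holonomy along the center-stable foliation $\mathcal{W}^s$, and then to apply the Manning--McCluskey dimension formula together with the conservative nature of $T_{V_a}$.

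First, since $a$ is not an isolated point of $B_\infty(\gamma)$, the argument from the proof of Theorem \ref{thm:cantor-geometric} shows that in some small neighborhood $U_a$ of $a$, every point of $B_\infty(\gamma)\cap U_a$ (except possibly $a$ itself, if it happens to be a tangency) is a transversal intersection of $\gamma$ with a center-stable leaf. Let $q\in\Lambda_{V_a}$ be the unique point with $a\in W^s(q) = W^{cs}(a)\cap S_{V_a}$, and choose a smooth curve $\tau\subset S_{V_a}$ through $a$ transversal to $W^s(q)$. By continuous dependence of the lamination $\mathcal{W}^s$ on the base point (Theorem \ref{thm:cs-manifolds} and the references in Section \ref{subsubsec:ds-basic}), shrinking $U_a$ allows $\tau$ also to meet each $W^{cs}$ that intersects $U_a$ transversally, so the holonomy $\mathfrak{h} : B_\infty(\gamma)\cap U_a \to B_\infty(\tau)\cap U_a$ along the center-stable leaves, as in \eqref{eq:holonomy-along-cs}, is well defined.

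Granting that $\mathcal{W}^s$ is $C^{1+\epsilon}$ in the transverse direction---the regularity statement to be established in the forthcoming \cite{GorodetskiXXXX}---the holonomy $\mathfrak{h}$ is bi-Lipschitz, and in particular preserves local Hausdorff dimension:
$$
\lhdim(B_\infty(\gamma), a) = \lhdim(B_\infty(\tau), a).
$$
Since the center-stable manifolds intersect $S_{V_a}$ precisely in the stable manifolds of points of $\Lambda_{V_a}$, we have $B_\infty(\tau)\cap U_a = \tau \cap W^s(\Lambda_{V_a})\cap U_a$, and transversality of $\tau$ to this stable foliation identifies the set, up to bi-Lipschitz equivalence, with the canonical ``stable slice'' of $\Lambda_{V_a}$ in any small Markov box. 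Because by Theorem \ref{thm:hyperbolicity} the map $T_{V_a}$ is a (real-analytic) Axiom~A diffeomorphism with locally maximal, transitive hyperbolic basic set $\Lambda_{V_a}$, the Manning--McCluskey formula applies and gives
$$
\lhdim(\tau\cap W^s(\Lambda_{V_a}), a) = d^s, \qquad \hdim(\Lambda_{V_a}) = d^s + d^u,
$$
where $d^s$ and $d^u$ denote the Hausdorff dimensions of the intersection of $\Lambda_{V_a}$ with a transversal to the stable, respectively unstable, foliation.

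The factor $\tfrac{1}{2}$ then comes from the fact that $T$ preserves a smooth area form on each $S_V$ (the Goldman-type symplectic form used by Cantat in \cite{C}), making $T_{V_a}$ a conservative surface diffeomorphism on $\Lambda_{V_a}$. Under this hypothesis one has $d^s = d^u$, either by the standard conformal-measure argument, or by noting that the reversing symmetry \eqref{eq:rev-sym} is a rigid diffeomorphism conjugating $T$ to $T^{-1}$, which maps the stable foliation of $\Lambda_{V_a}$ to its unstable foliation while preserving Hausdorff dimension. Thus $d^s = d^u = \tfrac{1}{2}\hdim(\Lambda_{V_a})$, which together with the two displays above yields \eqref{eq:horseshoe-dim}. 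The main obstacle is the transverse $C^{1+\epsilon}$ regularity of $\mathcal{W}^s$ across different values of $V$: the leafwise smoothness supplied by Theorem \ref{thm:cs-manifolds} is not sufficient on its own to guarantee that $\mathfrak{h}$ preserves Hausdorff dimension, and the whole argument rests on borrowing this regularity from \cite{GorodetskiXXXX}.
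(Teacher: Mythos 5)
Your overall strategy is the same as the paper's: reduce the local dimension computation on $\gamma$ to a transversal $\tau$ inside a single surface $S_{V_a}$ via holonomy along the center-stable lamination, then invoke the identity $\lhdim(B_\infty(\tau),a)=\tfrac12\hdim(\Lambda_{V_a})$. Your spelling-out of that last identity (Manning--McCluskey $\hdim(\Lambda_{V_a})=d^s+d^u$, plus $d^s=d^u$ either because $T|_{S_V}$ is area-preserving or via the rigid reversing symmetry $s(x,y,z)=(z,y,x)$) is correct and is more explicit than the paper, which simply cites \cite{DG, Casdagli1986, C} for this step.

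There are, however, two real problems. First, you reduce to a \emph{bi-Lipschitz} holonomy by asserting transverse $C^{1+\epsilon}$ regularity of the center-stable lamination and attributing it to \cite{GorodetskiXXXX}. That attribution is misplaced: \cite{GorodetskiXXXX} is cited in this paper only for the \emph{analyticity} of the center-stable leaves, which is used in Lemma~\ref{lem:finite-tangencies} to rule out infinitely many tangencies, not for any transverse regularity of the holonomy. In fact bi-Lipschitz regularity of the holonomy is more than is available and more than is needed: the paper's actual mechanism, proved in \cite{Y2}, is that $\mathfrak{h}$ is Hölder continuous with exponent that can be pushed arbitrarily close to $1$ by shrinking $U_p$. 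Since a $\theta$-Hölder map only inflates Hausdorff dimension by a factor $1/\theta$, and the local dimension is defined as a limit over shrinking neighborhoods, the Hölder-exponent-close-to-$1$ estimate already forces $\lhdim(B_\infty(\gamma),a)=\lhdim(B_\infty(\tau),a)$. You flag this as your ``main obstacle,'' and indeed it is one with your formulation; but it evaporates once you replace bi-Lipschitz by the weaker Hölder statement that is actually established.

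Second, you do not handle the case where $a$ itself is a (non-isolated) tangential intersection of $\gamma$ with a center-stable leaf. Near such a point the holonomy along center-stable leaves is not even locally injective with the right regularity (a quadratic tangency gives Hölder exponent $\tfrac12$, which does \emph{not} preserve dimension), so the transversal reduction breaks down at $a$ itself. The paper handles this by using Lemma~\ref{lem:finite-tangencies} to isolate the tangency, applying the transversal case to nearby points $p_n\to a$ to get $\lhdim(B_\infty(\gamma),p_n)=\tfrac12\hdim(\Lambda_{V_{p_n}})$, and then passing to the limit using continuity (indeed analyticity) of $V\mapsto\hdim(\Lambda_V)$. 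This limiting argument is an essential part of the proof under the stated hypothesis (``$a$ is not isolated,'' not ``$a$ is transversal'') and should be added to your write-up.
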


As a consequence of the preceding theorem we obtain the following two results.

\begin{theorem}\label{thm:haus-dim}
The Hausdorff dimension of $B_\infty(\gamma)$ is strictly between zero and one. Consequently, the Lebesgue measure of $B_\infty(\gamma)$ is zero. If $\gamma$ lies entirely in some $S_V$, then for every $a\in B_\infty(\gamma)$, $\lhdim(B_\infty(\gamma), a) = \hdim(B_\infty(\gamma))$; otherwise:
\begin{enumerate}

\item $B_\infty(\gamma)\ni a\mapsto \lhdim(B_\infty(\gamma), a)$ is continuous;

\item For a non-isolated point $b\in B_\infty(\gamma)$ and any $\epsilon > 0$, the local Hausdorff dimension along $(b-\epsilon, b+\epsilon)\cap B_\infty(\gamma)$ is nonconstant.

\end{enumerate}
\end{theorem}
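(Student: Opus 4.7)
My approach is to reduce everything to the local dimension identity
\begin{equation*}
\lhdim(B_\infty(\gamma), a) = \tfrac{1}{2}\hdim(\Lambda_{V_a})
\end{equation*}
from Theorem \ref{thm:horseshoe-dim}, where $V_a = I(a)$ is the unique value with $a \in S_{V_a}$, and to exploit that $a \mapsto V_a$ is real-analytic along $\gamma$ (since $I$ is analytic and $\gamma$ is analytically immersed). By Corollary \ref{cor:cantor-geometric}, $B_\infty(\gamma)$ contains only finitely many isolated points; these contribute nothing to Hausdorff dimension or Lebesgue measure, so I set them aside throughout. Compactness of $\gamma \subset \mathcal{M}$ keeps the set $V_a(\gamma)$ compact and bounded away from $0$ in $(0, \infty)$.

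For the bounds $0 < \hdim(B_\infty(\gamma)) < 1$, I pick any non-isolated $a_0 \in B_\infty(\gamma)$ and use
\begin{equation*}
\hdim(B_\infty(\gamma)) \geq \lhdim(B_\infty(\gamma), a_0) = \tfrac{1}{2}\hdim(\Lambda_{V_{a_0}}) > 0,
\end{equation*}
since $\Lambda_{V_{a_0}}$ is a nontrivial hyperbolic Cantor set by Theorem \ref{thm:hyperbolicity}. For the upper bound, continuity of $V \mapsto \hdim(\Lambda_V)$ for the Axiom A family, combined with compactness of $V_a(\gamma)$ and the pointwise bound $\hdim(\Lambda_V) < 2$ (each $\Lambda_V$ is a proper hyperbolic Cantor subset of the surface $S_V$), yields $\sup_{a \in B_\infty(\gamma)}\lhdim(B_\infty(\gamma), a) < 1$. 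The standard identity $\hdim(K) = \sup_{a \in K}\lhdim(K, a)$ for compact $K \subset \mathbb{R}$ then forces $\hdim(B_\infty(\gamma)) < 1$, and Lebesgue measure zero is immediate.

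If $\gamma \subset S_V$ for some $V$, the local dimension is identically $\tfrac{1}{2}\hdim(\Lambda_V)$, and the same sup identity gives $\hdim(B_\infty(\gamma)) = \tfrac{1}{2}\hdim(\Lambda_V)$, proving the equality claim. When $\gamma$ is not contained in any $S_V$, part (1) follows by composing the analytic map $a \mapsto V_a$ with the continuous function $V \mapsto \tfrac{1}{2}\hdim(\Lambda_V)$. For part (2), the analytic function $V_a$ is nonconstant on $\gamma$, so by the identity theorem it cannot be constant on any subset of $\gamma$ with a limit point; in particular it is nonconstant on $B_\infty(\gamma) \cap (b - \epsilon, b + \epsilon)$, which accumulates at the non-isolated $b$. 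Real-analyticity of $V \mapsto \hdim(\Lambda_V)$ from thermodynamic formalism for the analytic Axiom A family, together with its global nonconstancy (for instance $\hdim(\Lambda_V) \to \hdim(\mathbb{S}) = 2$ as $V \to 0^+$ whereas $\hdim(\Lambda_V) \to 0$ in the large-$V$ Casdagli regime), implies $V \mapsto \hdim(\Lambda_V)$ is nowhere locally constant; composing with the nonconstant $V_a$ yields (2).

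The main obstacle I anticipate is the regularity input for $V \mapsto \hdim(\Lambda_V)$---namely its continuity uniformly in $V > 0$ and, more subtly, its real-analyticity (or at least nowhere-local-constancy). Both rest on the hyperbolicity framework behind Theorem \ref{thm:hyperbolicity} (Casdagli--Cantat--Damanik--Gorodetski) together with thermodynamic formalism for analytic families of Axiom A diffeomorphisms; the degeneration of the invariant surfaces as $V \to 0^+$ is where uniformity becomes most delicate.
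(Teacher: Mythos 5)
Your argument is correct and runs along essentially the same lines as the paper's outline: reduce everything to the identity $\lhdim(B_\infty(\gamma),a)=\tfrac12\hdim(\Lambda_{V_a})$ and exploit compactness of $\gamma\subset\mathcal M$ together with continuity (resp.\ analyticity) of $V\mapsto\hdim(\Lambda_V)$. You are in fact a bit more careful than the published outline in two respects: you correctly keep track that $\Lambda_V$ is a two-dimensional hyperbolic Cantor set with $\hdim(\Lambda_V)\in(0,2)$ and $\hdim(\Lambda_V)\to 2$ as $V\to 0^+$ (so that $\tfrac12\hdim(\Lambda_V)\in(0,1)$ is what is really meant where the paper loosely writes ``$\hdim(\Lambda_V)\in(0,1)$'' and ``$\lim_{V\to0}\hdim(\Lambda_V)=1$''), and you make explicit both the role of the sup identity $\hdim(K)=\sup_{a\in K}\lhdim(K,a)$ for compact $K\subset\R$ and the need for $V_a(\gamma)$ to be bounded away from $0$ in order to get the strict upper bound $\hdim(B_\infty(\gamma))<1$, a point the paper leaves implicit.
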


\begin{remark}
To be completely clear, let us remark that by $(b-\epsilon, b+\epsilon)$ we mean an $\epsilon$ neighborhood of $b$ along $\gamma$; equivalently, if $\gamma$ is parameterized on $[c, d]$ and $B\in (c, d)$ is such that $\gamma(B) = b$, then we can speak of $\gamma(B - \epsilon, B+\epsilon)$.
\end{remark}

Regarding the box-counting dimension of $B_\infty(\gamma)$, we have

\begin{theorem}\label{thm:box-dim}
Say $a\in B_\infty(\gamma)$ and $U_a$ a neighborhood {\rm (}in $\gamma${\rm )} of $a$, such that $\gamma\cap U_a$ intersects the center-stable manifolds transversally. Then the box-counting dimension of $U_a\cap B_\infty(\gamma)$ exists and $\bdim(U_a\cap B_\infty(\gamma)) = \hdim(U_a\cap B_\infty(\gamma))$.
\end{theorem}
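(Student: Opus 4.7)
The plan is to use the transversality hypothesis to identify $U_a\cap B_\infty(\gamma)$, up to a bi-Lipschitz map, with a local unstable Cantor slice of the Axiom A basic set $\Lambda_{V_a}$ on the surface $S_{V_a}$, and then invoke the classical theorem that Hausdorff and box-counting dimensions coincide for such slices of basic sets of Axiom A surface diffeomorphisms (this is where the actual chaos--dimensional content of the argument lives).

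Concretely, I would fix $q_0\in\Lambda_{V_a}$ such that $a\in W^{cs}(q_0)$; such $q_0$ exists because $W^{cs}(a)\cap S_{V_a}$ is the local stable manifold $W^s(q_0)$ for some $q_0\in\Lambda_{V_a}$ by Theorem \ref{thm:cs-manifolds}(4). Choose a short arc $\tau$ through $q_0$ on the local unstable manifold $W^u_{\mathrm{loc}}(q_0)\subset S_{V_a}$ and, after shrinking $U_a$ so that transversality of $\gamma$ to the center-stable foliation is uniform on $U_a$, define the holonomy $\mathfrak{h}:U_a\cap B_\infty(\gamma)\to\tau$ by $\mathfrak{h}(p):=W^{cs}(p)\cap\tau$. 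Transversality makes $\mathfrak{h}$ well defined, and by the local product structure of the hyperbolic set $\Lambda_{V_a}$ its image lies in a neighborhood of $q_0$ in the local unstable Cantor slice $\Lambda_{V_a}\cap W^u_{\mathrm{loc}}(q_0)$. The same argument used in the proof of Theorem \ref{thm:cantor-geometric}, relying on continuity of the center-stable lamination in the $C^1$ topology, shows that $\mathfrak{h}$ is a homeomorphism onto its image.

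The crux is to upgrade $\mathfrak{h}$ from a homeomorphism to a bi-Lipschitz map on the Cantor set in question. Here I would exploit the smooth skew-product coordinates invoked in the proof of Theorem \ref{thm:cs-manifolds}: in those coordinates (after straightening) the center-stable foliation has flat two-dimensional leaves transverse to a single \emph{unstable} axis, the local product set of $\Lambda_V$ takes the form $K\times L_V$ for a fixed Cantor set $K$ and Cantor sets $L_V$ depending smoothly on $V$, and the holonomy $\mathfrak{h}$ factors as a smooth slide in the $V$-parameter followed by the classical stable holonomy inside $S_{V_a}$. The $V$-slide is $C^\infty$ (hence bi-Lipschitz) by smoothness of the skew-product conjugacy, while the stable holonomy inside $S_{V_a}$ is Lipschitz on the one-dimensional unstable Cantor slice by the classical regularity theory for Axiom A diffeomorphisms of surfaces (one-dimensional stable contraction automatically yields Lipschitz holonomy on the basic set). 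Composing these two factors yields the desired bi-Lipschitz bound for $\mathfrak{h}$.

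Once bi-Lipschitz regularity of $\mathfrak{h}$ is in hand the theorem follows quickly: by the classical equality for basic sets of Axiom A surface diffeomorphisms one has $\bdim(\Lambda_{V_a}\cap W^u_{\mathrm{loc}}(q_0))=\hdim(\Lambda_{V_a}\cap W^u_{\mathrm{loc}}(q_0))$, both dimensions being invariant under bi-Lipschitz maps, and the equality transports back via $\mathfrak{h}^{-1}$ to give $\bdim(U_a\cap B_\infty(\gamma))=\hdim(U_a\cap B_\infty(\gamma))$. The main obstacle is precisely the Lipschitz-regularity step: standard Axiom A surface theory provides Lipschitz holonomy only within a single invariant surface, so the extension of this regularity across the parameter $V$ has to be justified carefully from the partially hyperbolic skew-product structure rather than taken for granted; this is the technical linchpin on which everything else rests.
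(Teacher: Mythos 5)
Your proposal rests on upgrading the holonomy $\mathfrak{h}$ from Hölder to bi-Lipschitz, and that step is not just unjustified but actually false, for a structural reason that you can read off from the paper's own theorems. A bi-Lipschitz map preserves local Hausdorff dimension. The target of your proposed $\mathfrak{h}$ is a local unstable Cantor slice $\Lambda_{V_a}\cap W^u_{\mathrm{loc}}(q_0)$ inside a single invariant surface: this is a dynamically defined Cantor set, and its local Hausdorff dimension is the same constant $\tfrac12\hdim(\Lambda_{V_a})$ at every point. The source, however, is $U_a\cap B_\infty(\gamma)$, and by Theorem~\ref{thm:horseshoe-dim} the local dimension at a point $p$ is $\tfrac12\hdim(\Lambda_{V_p})$; Theorem~\ref{thm:haus-dim}(2) tells you that this quantity is nonconstant on every neighborhood of every non-isolated point whenever $\gamma$ is not contained in a single $S_V$. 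A bi-Lipschitz $\mathfrak{h}$ would force these two local-dimension functions to agree, contradiction. So no identification of $U_a\cap B_\infty(\gamma)$ with a slice of a single basic set $\Lambda_{V_a}$ can be bi-Lipschitz, and the whole reduction collapses.

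The flaw in your regularity argument is the ``$V$-slide.'' You assert it is $C^\infty$ ``by smoothness of the skew-product conjugacy,'' but the conjugacy $H_{V_0,V}$ from Theorem~\ref{thm:cs-manifolds}'s proof sketch is only a homeomorphism; what is smooth is each individual curve $V\mapsto H_{V_0,V}(x)$ for fixed $x$ (it is a transverse intersection of a center-stable and a center-unstable leaf), not the family as a whole, and the Lipschitz constants along these curves degenerate as one moves transversally through the lamination. Transverse regularity of the center-stable lamination in a partially hyperbolic skew product is generically only Hölder; this is exactly why the paper's proof of Theorem~\ref{thm:horseshoe-dim} is phrased as ``$\mathfrak{h}$ is Hölder continuous, and the Hölder exponent can be taken arbitrarily close to one by making $U_p$ sufficiently small.'' If $\mathfrak{h}$ were Lipschitz there would be no need to shrink. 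Hölder-with-exponent-near-one is enough for local Hausdorff dimension (which is stable under shrinking the neighborhood), but it does not give you existence and equality of box-counting dimension for the fixed set $U_a\cap B_\infty(\gamma)$: upper and lower box dimensions are only distorted by the Hölder exponents, which never reach one for any fixed $U_a$. The actual argument in \cite{Y2} has to control covering numbers at all scales directly, working with the varying dynamically defined Cantor sets $\Lambda_V$ as $V$ ranges over an interval, rather than transporting everything to a single $\Lambda_{V_a}$.

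The piece of your argument that is sound, and which you should keep, is the surface-level part: inside a fixed $S_V$, for $C^2$ (here analytic) Axiom~A surface diffeomorphisms the stable foliation is $C^1$, holonomies between transversals are $C^1$, and the one-dimensional unstable Cantor slices of a basic set have box-counting equal to Hausdorff dimension (Manning--McCluskey, Takens, as referenced in Appendix~\ref{a_1_4}). The genuine difficulty of the theorem is precisely that $\gamma$ threads transversally through a one-parameter family of these surfaces, and no single-surface reduction can be Lipschitz.
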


Detailed proofs of these theorems appear in \cite{Y2} for the special case when $\gamma$ is a line. Those proofs carry over essentially verbatim to the presently considered general case. We outline the main ideas below.

\begin{proof}[Proof of Theorem \ref{thm:horseshoe-dim} {\rm (}outline{\rm )}]
Assume $p\in B_\infty(\gamma)$ is a point of transversal intersection. Let $S_{V_p}$, $\tau$ and $U_p$ be as in the proof of Theorem \ref{thm:cantor-geometric} above. It is proved in \cite{Y2} that the map $\mathfrak{h}$, as defined in \eqref{eq:holonomy-along-cs}, is H\"older continuous. Moreover, the H\"older exponent can be taken arbitrarily close to one by making $U_p$ sufficiently small. It follows that the local Hausdorff dimension at $p$ of $B_\infty(\gamma)$ coincides with that of $B_\infty(\tau)$. On the other hand,
\begin{align*}
\lhdim(B_\infty(\tau), p) = \frac{1}{2}\hdim(\Lambda_{V_p}).
\end{align*}
(The last equality follows from results in hyperbolic dynamics; for details see, for example, \cite{DG, Casdagli1986, C}).

Now, if $q\in B_\infty(\gamma)$ is a point of tangential intersection which is not an isolated point of $B_\infty(\gamma)$, let $U_\epsilon$ be an $\epsilon$-neighborhood of $q$ along $\gamma$, such that for every $p\in B_\infty(\gamma)\cap U_\epsilon\setminus\set{q}$, $p$ is a point of transversal intersection (which, again, is made possible by Lemma \ref{lem:finite-tangencies}). We have
\begin{align*}
\lhdim(B_\infty(\gamma),q) &= \lim_{\epsilon\rightarrow 0}\hdim(B_\infty(\gamma)\cap U_\epsilon\setminus\set{q}) \\
&= \lim_{\epsilon\rightarrow 0}\left(\sup_{p\in B_\infty(\gamma)\cap U_\epsilon\setminus\set{q}}\set{\lhdim(B_\infty(\gamma), p)}\right) \\
&=\lim_{\epsilon\rightarrow 0}\left(\sup_{p\in B_\infty(\gamma)\cap U_\epsilon\setminus\set{q}}\set{\frac{1}{2}\hdim(\Lambda_{V_p})}\right).
\end{align*}
On the other hand, the map
\begin{align}\label{eq:dim-horseshoes}
V\mapsto \hdim(\Lambda_V)\hspace{2mm}\text{ is continuous }
\end{align}
(in fact smooth -- see \cite{Mane1990}, and in our case even analytic -- see \cite{C}). Hence
\begin{align*}
\lim_{\epsilon\rightarrow 0}\left(\sup_{p\in B_\infty(\gamma)\cap U_\epsilon\setminus\set{q}}\set{\frac{1}{2}\hdim(\Lambda_{V_p})}\right) = \frac{1}{2}\hdim(\Lambda_{V_q}),
\end{align*}
where $V_q$ is such that $q\in S_{V_q}$.
\end{proof}

\begin{proof}[Proof of Theorem \ref{thm:haus-dim} {\rm (}outline{\rm )}]
The first statement of the theorem follows from the fact that $\hdim(\Lambda_V)$, $V > 0$, is strictly between zero and one (see \cite{C}). If $\gamma$ lies entirely in some $S_V$, $V > 0$, then away from (finitely many) tangential intersections, $B_\infty$ forms a so-called \textit{dynamically defined Cantor set} (see \cite{Casdagli1986, DG}), one of the properties of which is independence of local Hausdorff dimension on the point (see \cite[Chapter 4]{Palis1993} for definitions and results).

Finally, (1) follows from \eqref{eq:dim-horseshoes}, while (2) follows from analyticity of the map in \eqref{eq:dim-horseshoes}, together with the fact that
\begin{align*}
\lim_{V\rightarrow 0}\hdim(\Lambda_V) = 1,
\end{align*}
while for $V > 0$, $\hdim(\Lambda_V)\in(0,1)$ (notice that $(b-\epsilon, b+\epsilon)\cap B_\infty(\gamma)$ contains limit points).
\end{proof}

The proof of Theorem \ref{thm:box-dim} is rather technical (even in outline form). We invite the reader to see \cite[Proof of Theorem 2.5]{Y2} for details.

So far we have been concentrating on $\gamma\in\mathcal{M}$. Occasionally, however, one needs to consider $\gamma$ with a point on $S_0$ which is of type $\mathbf{B}$ (for example, see \cite[Theorem 2.3-ii]{Y2}). In this case, we have the following addition to our existing results.

We have already classified all type-$\mathbf{B}$ points in $S_0$ in Section \ref{subsubsec:dynamics-cayley-cubic}, hence if $\gamma$ lies entirely in $S_0$, then $B_\infty(\gamma)$ can be easily described by appealing to the results of the aforementioned section. Now let us handle the case where $\gamma\subset \mathcal{M}\cup S_0$, but does not lie entirely in $S_0$.

\begin{theorem}\label{thm:point-on-s0}
Suppose $\gamma$ is a compact analytic curve in $\mathcal{M}\cup S_0$ with no self-intersections. Assume also that $\gamma$ does not lie entirely in $S_0$. Assume that $\gamma$ satisfies the hypothesis of Theorem \ref{thm:cantor-geometric} {\rm (}or, equivalently, those of Corollary \ref{cor:cantor-geometric}{\rm )}. Then $B_\infty(\gamma)$ is of type described in Theorem \ref{thm:cantor-geometric}. Moreover, if $\gamma\cap S_0$ contains type-$\mathbf{B}$ points, and at least one of these points is not an isolated point of $B_\infty(\gamma)$, then at that point the local Hausdorff dimension of $B_\infty(\gamma)$ is equal to one; hence in this case the global Hausdorff dimension of $B_\infty(\gamma)$ is also equal to one.
\end{theorem}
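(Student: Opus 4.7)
The plan is to treat this as a finite-set perturbation of the situation already handled by Theorems \ref{thm:cantor-geometric} and \ref{thm:horseshoe-dim}.

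First I would establish that $F := \gamma \cap S_0$ is finite. Indeed, $I \circ \gamma$ is an analytic function on the compact parameter interval of $\gamma$, which by hypothesis ($\gamma \not\subset S_0$) does not vanish identically, so its zero set is finite. Thus $\gamma \setminus F \subset \mathcal{M}$.

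For the topological statement, I would apply Corollary \ref{cor:cantor-geometric} to the connected arc(s) of $\gamma \setminus F$ containing the hypothesized transversal intersection, obtaining a Cantor set together with finitely many isolated points in $B_\infty(\gamma\setminus F)$. The full set $B_\infty(\gamma)$ then differs from $B_\infty(\gamma \setminus F)$ by at most the finite set $F \cap B_\infty(\gamma)$. Using closedness of the type-$\mathbf{B}$ set in $\mathcal{M}\cup S_0$ (which follows from Proposition \ref{prop:escape}: super-exponential escape in every coordinate is an open condition), each point of $F \cap B_\infty(\gamma)$ is either isolated in $B_\infty(\gamma)$ (joining the finite isolated part) or is accumulated by the Cantor structure (and is absorbed into it). In either case the topological type of Theorem \ref{thm:cantor-geometric} is preserved.

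For the dimension statement, let $p \in F \cap B_\infty(\gamma)$ be non-isolated; by Lemma \ref{lem:type-b-on-cones}, $p$ lies in $\mathbb{S}$, on some $\mathcal{W}_i$, or at a singularity $P_i$. Choose a sequence $p_n \in B_\infty(\gamma) \setminus \{p\}$ with $p_n \to p$. Since $F$ is finite, for large $n$ we have $p_n \in \mathcal{M} \cap S_{V_n}$ with $V_n \to 0^+$ by continuity of $I$; discarding finitely many isolated points, I may assume each $p_n$ is non-isolated in $B_\infty(\gamma)$. Theorem \ref{thm:horseshoe-dim} then gives
\[
\lhdim(B_\infty(\gamma), p_n) \;=\; \tfrac{1}{2}\hdim(\Lambda_{V_n}).
\]
Passing to the limit and combining with the behavior of $\hdim(\Lambda_V)$ as $V \to 0^+$, together with the fact that near $p$ the dynamics accumulates onto the invariant set on $\mathbb{S}$—where via the semiconjugacy \eqref{eq:semiconjugacy-cd} to the Anosov automorphism \eqref{eq:torus-auto} the stable foliation is dense and the invariant set is two-dimensional—produces $\lhdim(B_\infty(\gamma), p) = 1$. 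Since $B_\infty(\gamma) \subset \gamma$ is contained in a one-dimensional analytic arc, $\hdim(B_\infty(\gamma)) \leq 1$, and the local-to-global inequality $\hdim(B_\infty(\gamma)) \geq \lhdim(B_\infty(\gamma), p) = 1$ closes the argument.

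The main obstacle is the final dimension estimate at $p$. A naive invocation of Theorem \ref{thm:horseshoe-dim} along the approximating sequence $\{p_n\}$ only yields $\lhdim(B_\infty(\gamma), p) \geq \tfrac{1}{2}\limsup_{n\to\infty}\hdim(\Lambda_{V_n})$, which in general falls short of one. One must therefore exploit the qualitatively different nature of the dynamics on $S_0$: as $V \to 0^+$, the horseshoes $\Lambda_V$ accumulate on the full two-dimensional Anosov-type invariant set inside $\mathbb{S}$, and the center-stable lamination of $\mathcal{M}$ limits to the dense stable foliation of $\mathbb{S}$ coming from \eqref{eq:semiconjugacy-cd}. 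Making this accumulation precise enough to upgrade the bound to a full local Hausdorff dimension of one at $p$ is the technical heart of the argument.
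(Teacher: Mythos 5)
Your proposal follows the paper's argument step by step: finiteness of $\gamma\cap S_0$ via analyticity of $I\circ\gamma$, choice of a sequence $p_n\to p$ in $B_\infty(\gamma)$ with $V_n := I(p_n)\to 0^+$ (with isolated points discarded using the finiteness from Theorem~\ref{thm:cantor-geometric}), and application of Theorem~\ref{thm:horseshoe-dim} to get $\lhdim(B_\infty(\gamma),p_n)=\tfrac{1}{2}\hdim(\Lambda_{V_n})$. Your supplementary argument for the topological part (closedness of the type-$\mathbf{B}$ set, finite exceptional set absorbed into either the Cantor part or the finite isolated part) is a bit more explicit than what the paper writes, which simply asserts that $B_\infty(\gamma)$ is of the type described by Theorem~\ref{thm:cantor-geometric}; this is fine.

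The ``main obstacle'' you flag, however, is not a genuine gap. The quantitative input the paper invokes at exactly this step, citing Damanik--Gorodetski \cite{DG}, is that $\hdim(\Lambda_V)\to 2$ as $V\to 0^+$ (equivalently, the one-sided stable dimension $\tfrac{1}{2}\hdim(\Lambda_V)\to 1$). This is precisely the rigorous form of the heuristic you state in your final paragraph about the horseshoes $\Lambda_V$ accumulating on the two-dimensional Anosov invariant set inside $\mathbb{S}$. With this fact, your lower bound $\lhdim(B_\infty(\gamma),p)\geq \tfrac{1}{2}\limsup_n\hdim(\Lambda_{V_n})$ evaluates to $1$, and the reverse inequality is trivial since $B_\infty(\gamma)$ lies in a smooth one-dimensional curve. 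You were one citation short of completing the argument; there is no additional technical work ``to make the accumulation precise.'' A likely source of your hesitation is the sentence in the proof of Theorem~\ref{thm:haus-dim} asserting $\lim_{V\to 0}\hdim(\Lambda_V)=1$ and $\hdim(\Lambda_V)\in(0,1)$, which (given the factor $\tfrac{1}{2}$ in Theorem~\ref{thm:horseshoe-dim}) is a slip in the paper's bookkeeping: the quantity that lies in $(0,1)$ and tends to $1$ is the stable-slice dimension $\tfrac{1}{2}\hdim(\Lambda_V)$, while the full horseshoe dimension $\hdim(\Lambda_V)$ tends to $2$.
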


\begin{proof}[Proof of Theorem \ref{thm:point-on-s0}]
Suppose that $p\in B_\infty$ lies in $S_0$ and is not an isolated point of $B_\infty$. Since by assumption $\gamma$ does not lie entirely in $S_0$, analyticity of $\gamma$ and of the Fricke-Vogt invariant $I$ implies that $\gamma$ must intersect $S_0$ in at most finitely many points. Hence there exists a neighborhood of $p$, say $U_p$, such that for all $q\in U_p$, with $q\neq p$, $q\notin S_0$. Since $p$ is not an isolated point, $U_p\cap B_\infty(\gamma)$ contains a sequence $p_n$ converging to $p$, and Theorem \ref{thm:horseshoe-dim} applies; that is:
\begin{align*}
\lhdim(B_\infty(\gamma),p_n) = \frac{1}{2}\hdim(\Lambda_{V_n}),
\end{align*}
where $V_n > 0$ is such that $p_n\in S_{V_n}$. Hence $V_n\longrightarrow 0$, and by the results of \cite{DG} we conclude that
\begin{align*}
\lim_{n\rightarrow\infty}\lhdim(B_\infty(\gamma), p_n) = 1.
\end{align*}
Hence $\hdim(B_\infty(\gamma)\cap U_p) = 1$, and therefore $\hdim(B_\infty(\gamma)) = \lhdim(B_\infty(\gamma),p) = 1$.
\end{proof}

Let us conclude this section with the following result, which describes the dependence of the Hausdorff dimension of $B_\infty(\gamma)$ on $\gamma$.

\begin{theorem}\label{thm:haus-cont}
Suppose $\gamma$ is such that $B_\infty(\gamma)$ is nonempty and does not contain any isolated points. Then $\hdim(B_\infty(\gamma))$ depends continuously on $\gamma$ in the $C^1$-topology.
\end{theorem}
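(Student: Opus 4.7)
The plan is to reduce the continuity statement to the (cited) continuity of $V \mapsto \hdim(\Lambda_V)$ on $(0,\infty)$ from \eqref{eq:dim-horseshoes}, via the identity
\begin{align*}
\hdim(B_\infty(\gamma)) \;=\; \tfrac{1}{2}\sup\bigl\{\hdim(\Lambda_{V(p)}) : p\in B_\infty(\gamma)\bigr\}.
\end{align*}
To derive this identity I would combine Theorem \ref{thm:horseshoe-dim} (applicable at every point of $B_\infty(\gamma)$ by the no-isolated-points hypothesis) with the standard metric-space identity $\hdim(A) = \sup_{a\in A}\lhdim(A,a)$, which one verifies by a countable covering of $A$ by balls of almost-maximal local dimension. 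Because $\gamma$ is compact and $\gamma \subset \mathcal{M}$, the image $V(\gamma)$ is a compact subinterval of $(0,\infty)$, so the supremum is actually attained, and one is reduced to checking that the optimal $V$-value depends upper- and lower-semicontinuously on $\gamma$ in the $C^1$-topology.

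For lower semicontinuity along $\gamma_n \to \gamma$, I would fix $p_* \in B_\infty(\gamma)$ nearly achieving the supremum. By Lemma \ref{lem:finite-tangencies} together with the no-isolated-points hypothesis, $p_*$ is accumulated by transversal intersections of $\gamma$ with center-stable manifolds; continuity of $V$ along $\gamma$ and of $V \mapsto \hdim(\Lambda_V)$ let me replace $p_*$ by such a nearby transversal intersection at a negligible cost. Since the center-stable manifold $W^{cs}(p_*)$ is a smooth $2$-dimensional submanifold (Theorem \ref{thm:cs-manifolds}) and transversality of a curve--surface intersection is $C^1$-open in the curve, for all large $n$ there is a transversal intersection $q_n \in \gamma_n \cap W^{cs}(p_*)$ with $q_n \to p_*$. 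Since $q_n \in W^{cs}(p_*) \subset B_\infty$ and the local Cantor-set structure at transversal intersections (established inside the proof of Theorem \ref{thm:cantor-geometric} at \eqref{eq:sentence}) guarantees $q_n$ is non-isolated in $B_\infty(\gamma_n)$, Theorem \ref{thm:horseshoe-dim} gives
\begin{align*}
\hdim(B_\infty(\gamma_n)) \;\geq\; \tfrac{1}{2}\hdim(\Lambda_{V(q_n)}) \;\longrightarrow\; \tfrac{1}{2}\hdim(\Lambda_{V(p_*)}),
\end{align*}
from which the lower bound follows by letting $p_*$ exhaust the supremum.

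For upper semicontinuity, for each $n$ with $\hdim(B_\infty(\gamma_n))>0$ (the other case is automatic) I would pick a non-isolated $p_n \in B_\infty(\gamma_n)$ with $\tfrac{1}{2}\hdim(\Lambda_{V(p_n)})$ within $1/n$ of $\hdim(B_\infty(\gamma_n))$. Uniform convergence and compactness give a subsequential limit $p_n \to p_\infty \in \gamma$, and closedness of the set of type-$\mathbf{B}$ points in $\mathcal{M}$ (Lemma 3.4 of \cite{Y}) together with $p_\infty$ being bounded away from $S_0$ (since $\gamma \subset \mathcal{M}$ is compact) forces $p_\infty \in B_\infty(\gamma)$. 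The no-isolated-points hypothesis again makes Theorem \ref{thm:horseshoe-dim} applicable at $p_\infty$, and continuity of $V \mapsto \hdim(\Lambda_V)$ yields
\begin{align*}
\limsup_n \hdim(B_\infty(\gamma_n)) \;\leq\; \tfrac{1}{2}\hdim(\Lambda_{V(p_\infty)}) \;\leq\; \hdim(B_\infty(\gamma)).
\end{align*}
The main technical obstacle throughout is tangential intersections of $\gamma$ with the center-stable lamination: they are the only places where Theorem \ref{thm:horseshoe-dim} fails to apply directly and where the openness of transversality breaks down. The finite-tangency Lemma \ref{lem:finite-tangencies}, together with the standing no-isolated-points hypothesis (which guarantees that every tangency is accumulated by transversal intersections), is precisely what is needed to route around them and keep the two semicontinuity estimates symmetric.
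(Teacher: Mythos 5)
Your argument is correct and supplies the detail that the paper compresses into the single remark that the result ``follows immediately from the previous discussion, in particular \eqref{eq:horseshoe-dim}.'' The key steps you identify --- the identity $\hdim(B_\infty(\gamma)) = \sup_{p}\lhdim(B_\infty(\gamma),p) = \tfrac12\sup_p \hdim(\Lambda_{V(p)})$ (valid by countable stability of Hausdorff dimension and by Theorem~\ref{thm:horseshoe-dim}, which the no-isolated-points hypothesis makes applicable at every point), the reduction to transversal intersections via Lemma~\ref{lem:finite-tangencies}, the $C^1$-openness of transversality for the lower bound, and closedness of the type-$\mathbf{B}$ locus plus continuity of $V\mapsto\hdim(\Lambda_V)$ for the upper bound --- are precisely the ingredients the paper is alluding to, so this is the intended route rather than an alternative one. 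One minor bookkeeping point: in the upper-semicontinuity step you should note that if $\limsup_n\hdim(B_\infty(\gamma_n))=0$ the estimate is trivial (since the hypotheses force $\hdim(B_\infty(\gamma))>0$), and otherwise pass to a subsequence with $\hdim(B_\infty(\gamma_n))$ bounded below before selecting the near-optimal non-isolated points $p_n$.
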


The proof of this theorem follows immediately from the previous discussion, in particular \eqref{eq:horseshoe-dim}. Let us only remark that the result fails if $B_\infty(\gamma)$ contains isolated points. Indeed, say $\gamma$ is such that $B_\infty(\gamma)$ contains only one point, which is a point of quadratic intersection of $\gamma$ with a center-stable manifold. Then the Hausdorff dimension of $B_\infty(\gamma)$ is obviously zero. On the other hand, arbitrarily small perturbations $\widetilde{\gamma}$ of $\gamma$ produce a Cantor set for $B_\infty(\widetilde{\gamma})$ of strictly positive Hausdorff dimension, uniformly bounded away from zero.

\subsubsection{Band Spectrum Approximation of the Dynamical Spectrum}\label{subsubsec:band-spec-approx}

By analogy with spectral band structure of periodic approximations to the quasiperiodic Hamiltonians, we can construct approximations to $B_\infty(\gamma)$. One advantage of this general geometric construction, is that we can prove that these "band spectra" do in fact converge in Hausdorff metric to the actual dynamical spectrum $B_\infty(\gamma)$. As a result, we can prove that the spectra of periodic operators that converge strongly to the quasiperiodic one, converge in Hausdorff metric to the spectrum of the quasiperiodic operator.

Throughout this section, we assume that $\gamma\subset\mathcal{M}\cup S_0$ is compact, analytic and contains no self-intersections.

For $p\in\gamma$, let the components of $p$ be denoted by $p_x$, $p_y$, and $p_z$. That is, $p = (p_x,p_y,p_z)$. Since $\gamma$ is compact, there exists $C > 1$ such that $\max_{p\in\gamma}\abs{p_z} < C$.

We assume that $B_\infty(\gamma)$ does not contain any isolated points (and is nonempty, of course). Let us define the $n$th approximant of $B_\infty(\gamma)$, or the \textit{band spectrum on level $n$}, by
\begin{align*}
\sigma_n(\gamma) = \set{p\in\gamma: \abs{\pi\circ T^n(p)}\leq C},
\end{align*}
where $\pi$ denotes projection onto the third coordinate.

Before we state the result, let us quickly recall the definition of Hausdorff metric on $2^\R$. For any $A, B\subset\R$, define the Hausdorff metric $\hdist(A, B)$ by
%
\begin{align*}
\hdist(A, B) = \max\set{\adjustlimits\sup_{a\in A}\inf_{b\in B}\set{|a -b|},\hspace{1mm}\adjustlimits\sup_{b\in B}\inf_{a\in A}\set{|a - b|}}.
\end{align*}
Unless there is danger of confusion, we shall drop $\gamma$ and write simply $\sigma_n$ and $B_\infty$ for $\sigma_n(\gamma)$ and $B_\infty(\gamma)$.

The following theorem describes how $B_\infty$ is approximated by $\sigma_n$.

\begin{theorem}\label{thm:band-approx}
We have
\begin{align}\label{eq:band-approx}
B_\infty = \bigcap_{n\geq 1}\sigma_n\cup\sigma_{n+1}.
\end{align}
Moreover, if $\gamma$ satisfies the hypothesis of Theorem \ref{thm:cantor-geometric} {\rm (}or, equivalently, Corollary \ref{cor:cantor-geometric}{\rm )}, and $B_\infty$ does not contain any isolated points, then
\begin{align}\label{eq:band-conv}
\sigma_n\longrightarrow B_\infty\hspace{2mm}\text{ with respect to the Hausdorff metric.}
\end{align}
\end{theorem}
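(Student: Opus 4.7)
The plan is to handle the two assertions of the theorem in sequence, with Proposition~\ref{prop:escape} and Lemma~\ref{lem:escape-cond} as the main inputs.

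For the equality $B_\infty=\bigcap_{n\ge 1}(\sigma_n\cup\sigma_{n+1})$, first note that $T^n(p)=(x_{n+1}(p),x_n(p),x_{n-1}(p))$, so $\pi\circ T^n(p)=x_{n-1}(p)$, and $p\in\sigma_n\cup\sigma_{n+1}$ is equivalent to $\min(|x_{n-1}(p)|,|x_n(p)|)\le C$. The inclusion $\bigcap_n(\sigma_n\cup\sigma_{n+1})\subseteq B_\infty$ is immediate from Proposition~\ref{prop:escape}: for $p\notin B_\infty$, every coordinate of $T^n(p)$ diverges superexponentially, so for all large $n$ both $|x_{n-1}(p)|$ and $|x_n(p)|$ exceed $C$. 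For the forward inclusion I claim that every $p\in B_\infty$ satisfies $\min(|x_n(p)|,|x_{n-1}(p)|)\le 1<C$ for every $n$. Assuming both exceed $1$, apply the forward-escape condition of Lemma~\ref{lem:escape-cond} to the forward iterate $T^{n-1}(p)=(x_n,x_{n-1},x_{n-2})$: forward escape of a forward iterate would contradict the forward-boundedness of $p$'s orbit, so $|x_{n-2}|>|x_nx_{n-1}|>1$. Iterating on the consecutive forward iterates $T^{n-2}(p),T^{n-3}(p),\dots$ produces the recursion $a_{j+1}>a_{j-1}a_j$ for $a_j:=|x_{n-j}|$, which forces $a_j$ to grow at a doubly-exponential (Fibonacci-exponent) rate. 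Once $j$ reaches $n+1$, $a_{n+1}=|x_{-1}(p)|$ must exceed the uniform bound $\max_{q\in\gamma}|q_z|<C$ coming from compactness of $\gamma$, a contradiction for all $n$ beyond a fixed threshold; the finitely many remaining small-$n$ cases are handled directly by continuity of the first few iterates on $\gamma$.

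For the Hausdorff convergence I will first show $A_n:=\sigma_n\cup\sigma_{n+1}\to B_\infty$. The inclusion $B_\infty\subseteq A_n$ handles one side of the Hausdorff distance. For the other, suppose $x_{n_k}\in A_{n_k}$ stays at distance $\ge\epsilon$ from $B_\infty$; compactness of $\gamma$ yields a subsequential limit $x_*\notin B_\infty$, and some finite iterate $T^{N_0}(x_*)$ satisfies the escape condition of Lemma~\ref{lem:escape-cond}. A short computation shows that this condition is preserved under $T$: the new triple $(x',y',z')=(2xy-z,x,y)$ satisfies $|x'|\ge 2|xy|-|z|\ge|xy|$, $|y'|=|x|>1$, and hence $|x'y'|=|x'|\cdot|x|\ge|xy|\cdot|x|>|y|=|z'|$. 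Since the condition is also open in the point, it holds uniformly on a neighborhood $V$ of $x_*$, and the induced superexponential growth yields an $N_1\ge N_0$ with $|x_{n-1}(y)|,|x_n(y)|>C$ for all $y\in V$ and $n\ge N_1$. Taking $k$ so large that $x_{n_k}\in V$ and $n_k\ge N_1$ then contradicts $x_{n_k}\in A_{n_k}$. Since $\sigma_n\subseteq A_n$, the upper Hausdorff bound $\sup_{x\in\sigma_n}\mathrm{dist}(x,B_\infty)\to 0$ is immediate.

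The main obstacle is the matching lower bound $\sup_{y\in B_\infty}\mathrm{dist}(y,\sigma_n)\to 0$, since $B_\infty\not\subseteq\sigma_n$ in general: a point $y\in B_\infty$ may lie in $\sigma_{n+1}\setminus\sigma_n$ (so $|x_{n-1}(y)|>C$ while $|x_n(y)|\le C$), and by continuity of $x_{n-1}$ on $\gamma$ the same failure persists on a whole neighborhood of $y$. Here the no-isolated-points hypothesis is essential. I argue by contradiction: if the lower bound fails along some $n_k\to\infty$, extract a subsequential limit $y_*\in B_\infty$ and a neighborhood $U$ of $y_*$ in $\gamma$ with $U\cap\sigma_{n_k}=\emptyset$ for all large $k$; then $|x_{n_k-1}|>C$ throughout $U$, and perfectness of $B_\infty$ places an accumulating family of $B_\infty$-points inside $U$, each necessarily in $\sigma_{n_k+1}\setminus\sigma_{n_k}$. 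Combining this with the already established $A_n\to B_\infty$ (which confines $A_{n_k}\cap U$ to a vanishing neighborhood of $B_\infty$) and the continuity and band structure of $x_{n_k-1}$ on $\gamma$ between two such $B_\infty$-cluster points in $U$, $x_{n_k-1}$ must dip to values $\le C$ somewhere strictly inside $U$, placing a point of $\sigma_{n_k}\cap U$ and yielding the desired contradiction.
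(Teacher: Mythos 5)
Your proof of the equality~\eqref{eq:band-approx} has a gap in the forward inclusion. You claim that every $p\in B_\infty$ satisfies $\min(|x_n(p)|,|x_{n-1}(p)|)\le 1$ for every $n$, and you try to establish this by showing that if both exceed~$1$ then the backward chain $a_{j+1}>a_{j-1}a_j$ (with $a_j=|x_{n-j}|$) forces $a_{n+1}=|x_{-1}(p)|$ to exceed~$C$. But the chain starts from $a_0,a_1>1$ with no lower bound on $a_0-1$, $a_1-1$; since the growth rate is controlled only by $(1+\epsilon)^{F_{j}}$ with $\epsilon=\min(a_0,a_1)-1$ possibly arbitrarily small, there is no ``fixed threshold'' for $n$ beyond which $a_{n+1}>C$: for any $n$ the chain is compatible with $a_{n+1}<C$. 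Your fallback of ``the finitely many remaining small-$n$ cases are handled directly by continuity'' does not resolve this, because how many cases remain depends on $p$. The paper instead invokes Lemma~\ref{lem:bounded-orbit} (Proposition~5.2 of \cite{Damanik2000}), which has the threshold~$C$ built in: $p$ is not of type~$\mathbf{B}$ iff some iterate $T^N(p)$ has $|p_z^N|\le C$ and $|p_x^N|,|p_y^N|>C$. Tracing backward from a bad index $n$ to the first index $j_0\ge-1$ with $|x_{j_0}|\le C$ and applying this lemma at $T^{j_0+1}(p)$ gives the inclusion directly; your substitute via Lemma~\ref{lem:escape-cond}, whose threshold is $1$ rather than~$C$, only establishes the weaker statement $\min(|x_n|,|x_{n-1}|)\le 1$ if anything, and you have not shown even that. (It is in fact not clear that $\min\le 1$ holds for every $n$; the correct and needed statement is $\min\le C$.)

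For the Hausdorff convergence~\eqref{eq:band-conv}, your treatment of the upper side ($\sup_{x\in\sigma_n}\mathrm{dist}(x,B_\infty)\to0$) is fine and in fact makes the compactness step more explicit than the paper does, by observing that the sufficient escape condition of Lemma~\ref{lem:escape-cond} is preserved under~$T$ and open in the point. But the essential lower side ($\sup_{y\in B_\infty}\mathrm{dist}(y,\sigma_n)\to0$) is not established. Having produced a neighborhood $U$ of $y_*\in B_\infty$ with $U\cap\sigma_{n_k}=\emptyset$ for large $k$, you assert that ``$x_{n_k-1}$ must dip to values $\le C$ somewhere strictly inside $U$''---but this directly contradicts the standing hypothesis $|x_{n_k-1}|>C$ on $U$ without identifying which other fact is being violated; the appeal to ``band structure'' and $A_n\to B_\infty$ does not supply a contradiction. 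The paper's mechanism for this half is genuinely different and worth noting: it exploits the normally hyperbolic curve $\rho_1$ of period-two points in~\eqref{eq:per-2-curve}, whose induced center-stable manifolds $\mathcal{W}^l,\mathcal{W}^r$ are dense in the lamination, to plant in each small ball $C_i$ around $B_\infty$ two transversal intersection points $p_i^l,p_i^r$; the explicit form $(x,\tfrac{x}{2x-1},x)$ of the period-two orbit guarantees that for every large $n$ at least one of $T^n(p_i^l),T^n(p_i^r)$ has third coordinate strictly smaller than one, hence one of $p_i^l,p_i^r$ lies in $\sigma_n\cap C_i$. The no-isolated-points hypothesis enters precisely to guarantee that $\gamma$ picks up such transversal intersections inside each $C_i$; your proof attempt uses the hypothesis only to say ``$B_\infty$ is perfect,'' which by itself is not enough.
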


A special, model-specific case of this result appears in \cite{Y}.

\begin{proof}[Proof of Theorem \ref{thm:band-approx}] We begin with a lemma that characterizes a type-\textbf{B} point in terms of relative magnitudes of its components:

\begin{lemma}\label{lem:bounded-orbit}
Given $p = (p_x, p_y, p_z)\in\R^3$, and $C \geq 1$ is such that $\abs{p_z}\leq C$, then $p$ is not of type-$\mathbf{B}$ if and only if there exists $N\in\N$ such that if $(p_x^N, p_y^N, p_z^N)$ denotes $T^N(p)$, then
\begin{align}\label{eq:escape-cond}
\abs{p_z^N}\leq C\hspace{2mm}\text{ and }\hspace{2mm}\abs{p_x^N}, \abs{p_y^N} > C.
\end{align}
\end{lemma}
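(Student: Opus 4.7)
The plan is to prove the two implications separately, noting that the ``if'' direction is essentially immediate from Lemma \ref{lem:escape-cond} (Sufficient Condition for Escape): if some $N$ satisfies $\abs{p_x^N}, \abs{p_y^N} > C \geq 1$ and $\abs{p_z^N} \leq C$, then $\abs{p_x^N p_y^N} > C^2 \geq C \geq \abs{p_z^N}$, so $T^N(p)$ is not of type $\mathbf{B}$, and hence neither is $p$.

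For the ``only if'' direction, I would first collapse the orbit into a single scalar sequence. Since $T(x,y,z) = (2xy-z,\, x,\, y)$, defining $(a_n)_{n\geq -2}$ by $a_{-2} := p_z$, $a_{-1} := p_y$, $a_0 := p_x$, and the three-term recursion $a_{n+1} := 2 a_n a_{n-1} - a_{n-2}$ for $n \geq 0$, a one-line induction yields
\[
T^n(p) \;=\; (a_n,\, a_{n-1},\, a_{n-2}) \qquad \text{for every } n \geq 0.
\]
In these scalar terms, the desired conclusion becomes the existence of $N \geq 0$ with $\abs{a_{N-2}} \leq C$ and $\abs{a_{N-1}}, \abs{a_N} > C$.

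Now suppose $p$ is not of type $\mathbf{B}$. By Proposition \ref{prop:escape} the orbit of $p$ diverges superexponentially fast in every coordinate; in particular $\abs{a_n} \to \infty$, so there exists $M$ with $\abs{a_n} > C$ for all $n \geq M$. Hence the set $\mathcal{K} := \set{k \geq -2 : \abs{a_k} \leq C}$ is bounded above, and it is nonempty since $-2 \in \mathcal{K}$ by the standing hypothesis $\abs{p_z} \leq C$. Set $k^\star := \max \mathcal{K}$ and $N := k^\star + 2 \geq 0$. By the very definition of $k^\star$ we have $\abs{a_{N-2}} = \abs{a_{k^\star}} \leq C$, while $\abs{a_{N-1}} = \abs{a_{k^\star+1}} > C$ and $\abs{a_N} = \abs{a_{k^\star+2}} > C$; unwinding the identification yields $\abs{p_z^N} \leq C$ and $\abs{p_x^N}, \abs{p_y^N} > C$, as required.

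The only subtle point---and the place where the hypothesis genuinely enters---is the boundedness of $\mathcal{K}$. Without Proposition \ref{prop:escape} one could \emph{a priori} imagine an unbounded orbit whose $x$-coordinate still drops below $C$ infinitely often, in which case no finite ``last escape time'' $k^\star$ would exist and the construction of $N$ would collapse. Proposition \ref{prop:escape} rules out exactly this oscillatory scenario, reducing the forward direction to the elementary extraction of the last index $k$ with $\abs{a_k}\leq C$.
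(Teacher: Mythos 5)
Your proof is correct. For comparison: the paper does not actually prove Lemma~\ref{lem:bounded-orbit}, but instead defers to \cite[Proposition~5.2]{Damanik2000} (with $1$ replaced by $C$); the argument there proceeds by a direct analysis of the growth pattern of the coordinates once two consecutive ones exceed one in absolute value, in the style of S\"ut\H{o}. Your argument instead is a short self-contained derivation from two facts already stated in the paper: Lemma~\ref{lem:escape-cond} gives the ``if'' direction immediately via $\abs{p_x^N p_y^N} > C^2 \ge C \ge \abs{p_z^N}$, and Proposition~\ref{prop:escape} gives the ``only if'' direction after you collapse the orbit into the scalar three-term recursion $a_{n+1}=2a_na_{n-1}-a_{n-2}$ with $T^n(p)=(a_n,a_{n-1},a_{n-2})$ and extract the \emph{last} index $k^\star$ with $\abs{a_{k^\star}}\le C$, setting $N=k^\star+2$. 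This is a cleaner route in the sense that it reuses results already in the paper rather than re-deriving the escape mechanics. One small remark: your construction can produce $N=k^\star+2=0$ (precisely when $\abs{p_x},\abs{p_y}>C$ already), and in that case no $N\ge 1$ satisfies \eqref{eq:escape-cond}, since then $\abs{a_n}>C$ for all $n\ge -1$ and hence $\abs{p_z^N}=\abs{a_{N-2}}>C$ for every $N\ge 1$; so the lemma as stated implicitly requires $0\in\N$ (which is in any event necessary for it to be a true biconditional), and your proof is consistent with that reading.
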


For the proof of the preceding lemma, see \cite[Proposition 5.2]{Damanik2000}, replacing 1 therein with $C$ of Lemma \ref{lem:bounded-orbit}.

In what follows, for a point $p = (p_x, p_y, p_z)\in\R^3$, denote by $p^N = (p_x^N, p_y^N, p_z^N)$ the point $T^N(p)$. Observe that if $p\in\gamma$ and for some $N\geq 1$, $p^N\notin \sigma_N\cup\sigma_{N+1}$, then $p^N$ satisfies \eqref{eq:escape-cond} and so $p$ is not of type $\mathbf{B}$. Conversely, if $p\in\gamma$ is such that for all $n$, $p\in\sigma_n\cup\sigma_{n+1}$, then for all $n$, $\abs{p_\alpha^n} \leq C$, for some $\alpha\in\set{x,y,z}$. Application of Proposition \ref{prop:escape} then guarantees that $p$ is of type $\mathbf{B}$. In other words, we have proved \eqref{eq:band-approx}.

Let us now prove \eqref{eq:band-conv}. Let $\epsilon > 0$ chosen arbitrarily, and let $C_1,\dots, C_m$ be open sets of radius not larger than $\epsilon$ covering the compact set $B_\infty$. It is enough to show that there exists $N\in \N$ such that for all $n\geq N$, $\sigma_n\cap C_i^c = \emptyset$ and $\sigma_n\cap C_i\neq\emptyset$ for all $i\in\set{1,\dots,m}$.

Certainly since every $p\in B_\infty^c$ is not of type $\mathbf{B}$, for each such $p$, by Proposition~\ref{prop:escape}, there exists $N_p\in\N$ such that for all $n\geq N$, $\abs{p_z^{N_p}}>1$. By compactness of $\gamma$, there exists a common $N\in\N$, such that for all $p\in\left(\bigcup_{i=1}^m C_i\right)^c$, and $n\geq N$, $\abs{p_z^N} > 1$; that is, $p\notin \sigma_n$. Hence for all $n\geq N$, $\sigma_n\cap C_i^c = \emptyset$ for all $i\in\set{1,\dots,m}$.

Let us now return to the curve of period-two periodic points going through $P_1 = (1,1,1)$, which we denoted by $\rho_1$ (see Section \ref{subsubsec:dynamics-cayley-cubic}). Observe that $\rho_1\cap S_V$, for any $V> 0$, consists of two period-two periodic points $p_V^l$ and $p_V^r$ with $T(p_V^l) = p_V^r$ and $T(p_V^r) = p_V^l$. Consequently these points belong to $\Lambda_V$, and to each of these points there is attached a stable manifold, say $W^s(p_V^{l,r})$. Each of these two manifolds is dense in the lamination of stable manifolds on $S_V$ (see \cite{C}). Recall also that $P_1$ is a cutpoint of $\rho_1$, dividing it into two smooth curves $\rho_1^l$ and $\rho_1^r$, each contained entirely in $\mathcal{M}$, with ${p_V^l} = \rho_l^l\cap S_V$ and $p_V^r = \rho_1^r\cap S_V$. These two curves are normally hyperbolic, as was discussed in Section \ref{subsubsec:dynamics-cayley-cubic}, and consequently the stable manifold attached to each of these curves forms a dense sublamination of the lamination by center-
stable manifolds. Let us call these manifolds $\mathcal{W}^l$ and $\mathcal{W}^r$. It follows that for each $i\in\set{1,\dots,m}$, there exist points $p_i^l$ and $p_i^r$ in $C_i\cap B_\infty$, with $p_i^{l,r}\in \mathcal{W}^{l,r}$. Observe that the points on $\rho_1$ are of the form
\begin{align*}
\left(x, \frac{x}{2x - 1}, x\right)\hspace{2mm}\text{ with }\hspace{2mm}x\in\left(-\infty,\frac{1}{2}\right)\cup\left(\frac{1}{2},\infty\right).
\end{align*}
Consequently, if $p_V^l = (x, x/(2x - 1), x)$, then $p_V^r = (x/(2x - 1), x, x/(2x - 1))$. If $x = 1$, then we get $P_1$, which does not interest us, since $P_1\in S_0$. Otherwise, either $\abs{x} < 1$ or $\abs{x/(2x - 1)} < 1$; that is, either the absolute value of the $z$-component of $p_V^l$, or that of $p_V^r$, is strictly smaller one. It follows that for all sufficiently large $n$, either the absolute value of the $z$-component of $T^n(p_i^l)$ or that of $T^n(p_i^r)$ is strictly smaller than one; hence either $p_l^l$ or $p_i^r$ belongs to $\sigma_n$. Therefor, for all sufficiently large $n$, $\sigma_n\cap C_i\neq \emptyset$.
\end{proof}

So far we have carried out a qualitative (albeit rather detailed) analysis. Certainly, quantitative results are desired (such as estimates on fractal dimensions); however, even in model-specific cases such results are rather scarce and are notoriously difficult to obtain. This, among other things, will be the focus of our attention in Section \ref{sec:fractal-dims}. We shall comment further on previous model-specific quantitative results, as well as provide relevant references to previous works, in that section.

\section{The Fractal Dimension of the Essential Support}\label{sec:fractal-dims}

In this section we apply the results from Section~\ref{sec:geometric-setup} to the special case of interest in this paper. That is, we consider the set $\Sigma_{\alpha,\beta}$ associated with the Fibonacci substitution on two elements of the open unit disk in $\C$ and investigate its local and global fractal dimension by relating these quantities to the associated curve of initial conditions and the general results for the Fibonacci trace map presented in the previous section.

Let us fix $\alpha,\beta \in \D$, $\alpha \not= \beta$, and consider the associated one-sided infinite word $\omega_{\alpha,\beta}$ over the alphabet $\{ \alpha , \beta \}$ that is invariant under the substitution $S(\alpha) = \alpha \beta$ and $S(\beta) = \alpha$.

As explained in the beginning of Section~\ref{sec:geometric-setup}, the relevant curve of initial conditions for the OPUC problem generated by Verblunsky coefficients  $\omega_{\alpha,\beta}$ is given by
$$
\gamma_{\alpha,\beta}(w) := (x_1 (w), x_0 (w), x_{-1} (w)) = \left(\frac{w^{1/2} + w^{-1/2}}{2\rho}, \frac{w^{1/2} + w^{-1/2}}{2\sigma}, \frac{K}{2\rho \sigma}\right),
$$
where $\rho = \sqrt{1- |\alpha|^2}$, $\sigma = \sqrt{1-|\beta|^2}$, and $K = 2(1-\mathrm{Re}(\overline{\alpha} \beta))$. 
We are interested in those points on the curve of initial conditions $\gamma_{\alpha,\beta}$, which are of type $\mathbf{B}$. The corresponding $w$'s form the set $\Sigma_{\alpha,\beta}$, which is the essential spectrum of $\mathcal{C}_\omega$ and the essential (topological) support of $\mu_{\omega}$ for every $\omega \in \Omega_{\alpha,\beta}$:

\begin{theorem}\label{thm:spec-bdd}
The set $\Sigma_{\alpha,\beta}$ equals $\{ w \in \partial \D : \gamma_{\alpha,\beta}(w) \text{ is a type-$\mathbf{B}$ point} \}$ and is a Cantor set of zero Lebesgue measure.
\end{theorem}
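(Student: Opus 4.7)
The first claim follows from the trace-map characterization recalled in Section~\ref{sec:geometric-setup}. By \cite[Theorem~12.8.3]{S2}, $w \in \Sigma_{\alpha,\beta}$ iff $\{x_n(w)\}_n$ is bounded, and the recursion \eqref{e.tracemaprecursion} together with the definition of $T$ immediately gives $T^{n}(\gamma_{\alpha,\beta}(w)) = (x_{n+1}(w), x_n(w), x_{n-1}(w))$ for all $n\ge 0$. Hence boundedness of the trace sequence is exactly property $\mathbf{B}$ for $\gamma_{\alpha,\beta}(w)$.

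For the Cantor-set claim, the plan is to apply Theorem~\ref{thm:point-on-s0} (equivalently, Corollary~\ref{cor:cantor-geometric}) to the curve $\gamma := \gamma_{\alpha,\beta}(\partial\D)$. Parametrizing $w = e^{i\theta}$, the identity $w^{1/2}+w^{-1/2} = 2\cos(\theta/2)$ exhibits $\gamma$ as an affine line segment in the plane $z = K/(2\rho\sigma)$, analytically parametrized by $t := \cos(\theta/2) \in [-1,1]$ and hence compact and analytic. A short AM--GM computation, using the chain $\mathrm{Re}(\overline\alpha\beta) \le |\alpha||\beta| \le \frac{1}{2}(|\alpha|^2+|\beta|^2)$, yields $K > 2\rho\sigma$ whenever $\alpha \neq \beta$; thus the $z$-coordinate of every point of $\gamma$ is strictly greater than $1$. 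Since the bounded component of $S_V$, $V<0$, is contained in $[-1,1]^3$, the dichotomy described in Section~\ref{subsubsec:dynamics-inside} rules out type-$\mathbf{B}$ points of $\gamma$ outside $\mathcal{M}\cup S_0$. Together with the explicit formula $I(t) = (K^2 - 4t^2|\alpha-\beta|^2)/(4\rho^2\sigma^2) - 1$, this shows the type-$\mathbf{B}$ points of $\gamma$ all lie on the compact analytic sub-arc $\gamma_0 := \gamma \cap (\mathcal{M}\cup S_0)$.

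To invoke Theorem~\ref{thm:point-on-s0} on $\gamma_0$ one has to check that $\gamma_0$ is not contained in a single center-stable manifold and that it meets the foliation $\mathcal{W}^s$ transversally at some interior point. Both reduce to knowing that $B_\infty(\gamma_0)$ is \emph{both} infinite and proper in $\gamma_0$: infiniteness because the spectrum of the Fibonacci CMV matrix is uncountable (a standard OPUC fact, recoverable from the trace-map geometry via explicit periodic points of $T$ on the various $S_V$ and the denseness of their stable manifolds), and properness because such a spectrum has empty interior in $\partial\D$. Lemma~\ref{lem:finite-tangencies} then leaves only finitely many tangential intersections, so a transversal intersection must occur away from the endpoints of $\gamma_0$. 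Theorem~\ref{thm:point-on-s0} thereby delivers $B_\infty(\gamma_0)$ as a Cantor set with (possibly) finitely many isolated points, all arising from tangencies; but by definition $\Sigma_{\alpha,\beta}$ is the topological support minus its isolated points, so pulling back through the essentially 1-to-1 real-analytic parametrization $\partial\D \to \gamma$ produces a genuine Cantor set in $\partial\D$. Zero Lebesgue measure follows from Theorem~\ref{thm:haus-dim}, which gives $\hdim(B_\infty(\gamma_0))<1$, together with Lipschitz continuity of the parametrization. The main obstacle is ruling out the two degenerate cases $\gamma_0 \subset W^{cs}$ and $|B_\infty(\gamma_0)|<\infty$; each requires either auxiliary spectral input or a direct construction of distinct periodic orbits of $T$ whose stable manifolds meet $\gamma_0$.
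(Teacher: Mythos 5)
Your first claim (the trace-map characterization of $\Sigma_{\alpha,\beta}$) is correct and matches the paper's setup. The AM--GM observation that $K > 2\rho\sigma$ (hence the $z$-coordinate of $\gamma_{\alpha,\beta}$ exceeds $1$) whenever $\alpha \ne \beta$ is also correct, and it is a clean way to show directly that no point of the curve with $I<0$ can be type-$\mathbf{B}$ --- it avoids a circularity that would arise if one tried to cite Proposition~\ref{prop:neg-inv}, whose proof in the paper already uses the Cantor-set conclusion.

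However, your route for the Cantor-set and zero-measure claims is genuinely different from the paper's and has unresolved gaps. The paper's own proof is a one-line citation of Simon's Section~12.8 of \cite{S2}, together with the remark that Proposition~\ref{prop:Barry-points} rules out the case (condition (2) of Simon's Proposition~12.8.6) of points with all coordinates persistently larger than one. You instead try to derive everything from Theorem~\ref{thm:point-on-s0}, but this requires verifying that $\gamma_0$ is not contained in a single center-stable manifold and contains a transversal intersection with the foliation away from its endpoints. You acknowledge this obstacle, but the justification you offer (``such a spectrum has empty interior'') is circular: empty interior is essentially the Cantor-set statement being proved. Moreover, your zero-Lebesgue-measure argument fails in general. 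You invoke Theorem~\ref{thm:haus-dim} to get $\hdim(B_\infty(\gamma_0)) < 1$, but that theorem is stated under the hypothesis $\gamma \subset \mathcal{M}$, i.e.\ $I > 0$ along the entire curve. Your $\gamma_0$ is permitted to touch $S_0$, and when it does so at a non-isolated type-$\mathbf{B}$ point, Theorem~\ref{thm:point-on-s0} gives Hausdorff dimension \emph{equal to one}, which is compatible with but does not imply zero Lebesgue measure. The paper notes (around Proposition~\ref{prop:largeI}) that $I(1) = 0$ when $\alpha = 0$, so this case actually occurs. Filling these two gaps --- the transversality hypothesis and the zero-measure conclusion at $S_0$ --- would require spectral input along the lines of Simon's proof, which is precisely what the paper cites and what your proposal is trying to avoid.
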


This was first proved in \cite[12.8]{S2} and has been further elucidated in Section~\ref{sec:geometric-setup}. An important remark is that the set of points satisfying (2) in \cite[Proposition 12.8.6]{S2} is empty: this is Proposition~\ref{prop:Barry-points}.

Since the set $\Sigma_{\alpha,\beta}$ is known to be a Cantor set of zero Lebesgue measure, it is clearly of interest to study quantities such as fractal dimensions. Section~\ref{sec:geometric-setup} provides all the necessary general results, so that we may now exploit those, along with quantitative information obtained in the study of the Schr\"odinger case.

We first note the following:

\begin{theorem}\label{theorem:hdisbc}
There is a finite set $F_{\alpha,\beta} \subseteq \Sigma_{\alpha,\beta}$ such that for $w \in \Sigma_{\alpha,\beta} \setminus F_{\alpha,\beta}$, $\lbdim(\Sigma_{\alpha,\beta}; w)$ exists and is equal to $\lhdim(\Sigma_{\alpha,\beta}; w)$.
\end{theorem}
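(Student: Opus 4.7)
The plan is to translate the question about $\Sigma_{\alpha,\beta}$ into a question about $B_\infty(\gamma_{\alpha,\beta})$ using Theorem~\ref{thm:spec-bdd}, and then to invoke the general equality of local box-counting and Hausdorff dimensions supplied by Theorem~\ref{thm:box-dim} at all but finitely many points.

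First I would verify that $\gamma_{\alpha,\beta}$ satisfies the standing hypotheses of Section~\ref{subsec:model-independent}: it is compact and real-analytic because $w^{1/2}+w^{-1/2} = 2\cos(\theta/2)$ is a real-analytic function of $w = e^{i\theta}$ on $\partial \D$, and the third coordinate is constant. The key non-degeneracy is that $\gamma_{\alpha,\beta}$ does not lie entirely in a single center-stable manifold: if it did, Theorem~\ref{thm:cs-manifolds}(6) would make every $w$ a type-$\mathbf{B}$ parameter, so $\Sigma_{\alpha,\beta}$ would contain an arc of $\partial \D$, contradicting the zero Lebesgue measure statement of Theorem~\ref{thm:spec-bdd}. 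The same zero-measure argument, combined with Lemma~\ref{lem:open-bounded}, rules out $\gamma_{\alpha,\beta}$ meeting the bounded component of any $S_V$ with $V < 0$ on a set of positive arclength. Finally, by analyticity of the Fricke--Vogt invariant $I$ restricted to $\gamma_{\alpha,\beta}$, the intersection $\gamma_{\alpha,\beta} \cap S_0$ is either all of $\gamma_{\alpha,\beta}$ (excluded by the same zero-measure argument, since $\mathbb{S}$ consists of type-$\mathbf{B}$ points) or a finite set.

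With these non-degeneracies in place, Lemma~\ref{lem:finite-tangencies} yields a finite set of tangential intersections of $\gamma_{\alpha,\beta}$ with the family $\mathcal{W}^s$. I would then define $F_{\alpha,\beta} \subseteq \Sigma_{\alpha,\beta}$ as the (finite) union of: (i) the $w$'s mapping to a tangential intersection, (ii) the $w$'s with $\gamma_{\alpha,\beta}(w) \in S_0$, and (iii) the finitely many critical points of the parametrization $w \mapsto \gamma_{\alpha,\beta}(w)$ on $\partial \D$. For any $w \in \Sigma_{\alpha,\beta} \setminus F_{\alpha,\beta}$, the point $\gamma_{\alpha,\beta}(w)$ lies in $\mathcal{M}$ and is a transversal intersection with a center-stable manifold. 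By finiteness of $F_{\alpha,\beta}$, there is an open arc $I_w \subset \partial\D \setminus F_{\alpha,\beta}$ about $w$ whose image $U_w := \gamma_{\alpha,\beta}(I_w)$ meets the center-stable manifolds only transversally, and on which $\gamma_{\alpha,\beta}$ is a bi-Lipschitz embedding. Theorem~\ref{thm:box-dim} then gives
\[
\bdim\bigl(U_w \cap B_\infty(\gamma_{\alpha,\beta})\bigr) = \hdim\bigl(U_w \cap B_\infty(\gamma_{\alpha,\beta})\bigr),
\]
and because the parametrization is bi-Lipschitz on $I_w$ this equality transfers to $\bdim(I_w \cap \Sigma_{\alpha,\beta}) = \hdim(I_w \cap \Sigma_{\alpha,\beta})$. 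Letting $I_w$ shrink to $\{w\}$ yields that $\lbdim(\Sigma_{\alpha,\beta}; w)$ exists and equals $\lhdim(\Sigma_{\alpha,\beta}; w)$, as desired.

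The main obstacle I expect is not the final dimension argument itself, which follows directly from Theorem~\ref{thm:box-dim} once transversality is in hand, but rather the package of non-degeneracy checks for the explicit curve $\gamma_{\alpha,\beta}$ --- in particular the ``not contained in a single center-stable manifold'' step, which is what prevents $F_{\alpha,\beta}$ from swelling to all of $\Sigma_{\alpha,\beta}$. The clean unifying tool for these checks is the zero Lebesgue measure assertion of Theorem~\ref{thm:spec-bdd}, so the argument is really dynamical in content but bootstrapped from a single measure-theoretic input.
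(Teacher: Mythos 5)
Your proof takes the same route as the paper's (which is essentially a one-sentence argument): reduce to $B_\infty(\gamma_{\alpha,\beta})$, invoke Lemma~\ref{lem:finite-tangencies} to get finitely many tangential intersections with the center-stable manifolds, and apply Theorem~\ref{thm:box-dim} away from the resulting finite bad set. You have, however, filled in several non-degeneracy checks that the paper leaves implicit --- that $\gamma_{\alpha,\beta}$ is not contained in a single center-stable manifold (via the zero-measure part of Theorem~\ref{thm:spec-bdd}), that the points with $I(w) \le 0$ are controlled (via Lemma~\ref{lem:open-bounded} and analyticity of $I$), and that the parametrization is locally bi-Lipschitz so that dimensions transfer from the image curve back to arcs of $\partial\D$. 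These are genuine gaps in the published proof, which only cites analyticity and the fact that $\gamma$ is ``contained in no single invariant surface.'' One small remark: since $\alpha \ne \beta$, the invariant $I$ restricted to $\partial\D$ is a non-constant affine function of $\mathrm{Re}\,w$, so $\{I = 0\}$ automatically consists of at most two points and $\{I < 0\}$ is a single open arc disjoint from $\Sigma_{\alpha,\beta}$ by Proposition~\ref{prop:neg-inv}; this slightly simplifies your item (ii). Also, because the image of $\gamma_{\alpha,\beta}$ is a line segment (the third coordinate is constant and the first two are proportional), the only critical point of the parametrization is $w = 1$, so item (iii) is a single point. With those observations, your proof is correct and, in spirit, identical to the paper's.
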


\begin{proof}
Since the curve of initial conditions is analytic and it is contained in no single invariant surface, it may have only isolated tangencies with invariant surfaces. Therefore Theorem~\ref{thm:box-dim} applies.
\end{proof}

Local dimensions such as $\lbdim(\Sigma_{\alpha,\beta}; w)$ and $\lhdim(\Sigma_{\alpha,\beta}; w)$ will depend on the value the invariant $I$ takes at the point $w \in \Sigma_{\alpha,\beta}$. Recall from Section~\ref{sec:geometric-setup} that
$$
I(w) = \mathrm{Re} \, w \left(\frac{1}{2\rho^2} + \frac{1}{2\sigma^2} - \frac{K}{2\rho^2 \sigma^2}\right) + \frac{K^2 - 2K}{4\rho^2 \sigma^2} + \frac{1}{2\sigma^2} + \frac{1}{2\rho^2} - 1.
$$
Since the spectral parameter $w$ belongs to the unit circle, $I$ as an affine function of $\mathrm{Re} \, w$ takes its maximum/minimum at $\pm 1$. In particular, we have
$$
I(1) = \frac{1}{\rho^2} + \frac{1}{\sigma^2} + \frac{K^2 - 4K}{4\rho^2 \sigma^2} - 1,
$$
so that a short calculation shows
$$
I(1) < 0 \Leftrightarrow | \mathrm{Re}(\overline{\alpha} \beta) | < |\alpha \beta|.
$$
As a consequence, we see that $I$ may take negative values for $\alpha , \beta \in \D$ and $w \in \partial \D$ suitably chosen. This observation is of interest since in all previous studies of models derived from a sequence invariant under the Fibonacci substitution, negative values of the invariant never occurred. This potentially complicates the situation in the OPUC setting.

On the essential spectrum, however, the invariant will always be non-negative:

\begin{prop}\label{prop:neg-inv}
We have $I(w) \ge 0$ for every $w \in \Sigma_{\alpha,\beta}$.
\end{prop}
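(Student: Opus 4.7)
The plan is to argue by contrapositive: if $I(w) < 0$, I will show that $\gamma_{\alpha,\beta}(w)$ fails to be a type-$\mathbf{B}$ point, so $w \notin \Sigma_{\alpha,\beta}$. Two ingredients are needed.

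The first is the numerical fact that the third coordinate $x_{-1} = K/(2\rho\sigma)$ of $\gamma_{\alpha,\beta}(w)$, which is independent of $w$, satisfies $x_{-1} > 1$. This inequality is equivalent to $1 - \mathrm{Re}(\bar\alpha\beta) > \sqrt{(1-|\alpha|^2)(1-|\beta|^2)}$, and it follows from $\mathrm{Re}(\bar\alpha\beta) \le |\alpha||\beta|$ combined with the identity $(1-|\alpha||\beta|)^2 - (1-|\alpha|^2)(1-|\beta|^2) = (|\alpha|-|\beta|)^2 \ge 0$; strict inequality in both steps uses $\alpha \ne \beta$. Thus $\gamma_{\alpha,\beta}$ lies in the open half-space $\{z > 1\}$, and in particular outside the closed unit cube.

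The second ingredient is the geometric fact that for every $V < 0$ the surface $S_V$ is disjoint from the boundary $\partial [-1,1]^3$. I check this by direct substitution: on $z = \pm 1$ the equation $x^2 + y^2 + z^2 - 2xyz - 1 = V$ reduces to $(x \mp y)^2 = V$, which has no real solutions when $V < 0$; by the permutation symmetry of $I$ the same holds on the $x = \pm 1$ and $y = \pm 1$ faces. Combining this with the component description recalled in Section~\ref{sec:geometric-setup} (for $-1 < V < 0$, one compact sphere plus four unbounded discs; for $V = -1$, the origin plus four discs; for $V < -1$, only the four discs), the bounded component for $-1 < V < 0$ cannot cross $\partial [-1,1]^3$ as $V$ varies; since it approaches $\mathbb{S} \subset [-1,1]^3$ as $V \uparrow 0$ and collapses to the origin as $V \downarrow -1$, connectedness in $V$ forces it to stay inside $(-1,1)^3$ throughout.

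Putting these together, assume $I(w) < 0$. The third coordinate of $\gamma_{\alpha,\beta}(w)$ is strictly greater than $1$, so $\gamma_{\alpha,\beta}(w)$ cannot lie in the bounded portion of $S_{I(w)}$ (empty for $V < -1$, just the origin for $V = -1$, contained in $(-1,1)^3$ for $-1 < V < 0$). Hence $\gamma_{\alpha,\beta}(w)$ lies on one of the four unbounded components of $S_{I(w)}$; by the result of Roberts cited in Section~\ref{subsubsec:dynamics-inside}, every orbit on these unbounded components escapes to infinity, so $\gamma_{\alpha,\beta}(w)$ is not type-$\mathbf{B}$ and therefore $w \notin \Sigma_{\alpha,\beta}$. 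This proves the proposition. The only step that requires any care is the connectedness-in-$V$ argument for the sphere-like components; the rest is direct algebra.
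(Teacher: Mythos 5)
Your proof is correct and takes a genuinely different route from the paper's. The paper argues by contradiction using two \emph{soft} ingredients: Lemma~\ref{lem:open-bounded} (for $V<0$, being type-$\mathbf{B}$ is an open condition near the bounded component) and the already-established fact that $\Sigma_{\alpha,\beta}$ is a Cantor set (Theorem~\ref{thm:spec-bdd}); if $I(w)<0$ at a spectral point, an entire neighborhood of $w$ on $\partial\D$ would lie in $\Sigma_{\alpha,\beta}$, contradicting the Cantor structure. You instead give a direct, self-contained argument that bypasses the Cantor property: the constant third coordinate $x_{-1}=K/(2\rho\sigma)$ of $\gamma_{\alpha,\beta}$ satisfies $x_{-1}>1$ whenever $\alpha\neq\beta$ (a clean computation via $\mathrm{Re}(\bar\alpha\beta)\le|\alpha\beta|$ and $(1-|\alpha\beta|)^2-\rho^2\sigma^2=(|\alpha|-|\beta|)^2$), while the bounded component of $S_V$ for $-1<V<0$ is contained in $(-1,1)^3$, so $\gamma_{\alpha,\beta}$ can only ever meet the unbounded components of the negative-level surfaces, which Roberts's escape result disposes of. This isolates a concrete structural feature of the OPUC curve of initial conditions (it lives in $\{z>1\}$) that the paper does not remark upon, and trades the Cantor-set input for the component description of $\{S_V\}_{V<0}$ plus Roberts's theorem. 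A minor streamlining of your connectedness-in-$V$ step: the bounded component of $S_V$ contains $(\sqrt{1+V},0,0)\in(-1,1)^3$ (the unbounded components, being connected, reaching infinity, and disjoint from $\partial[-1,1]^3$, cannot enter the cube), and since the bounded component is connected and also disjoint from $\partial[-1,1]^3$, it lies entirely in $(-1,1)^3$ for each fixed $V$, without tracking its motion in $V$.
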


\begin{proof}
Assume there exists $w \in \Sigma_{\alpha,\beta}$ such that $I(w) < 0$. Since $w \in \Sigma_{\alpha,\beta}$, $\gamma_{\alpha,\beta} (w)$ is a type-$\mathbf{B}$ point due to Theorem~\ref{thm:spec-bdd}. By the assumption $I(w) < 0$ and continuity of $I$, the same will be true in a sufficiently small neighborhood (in $\partial \D$) of $w$. Thus, nearby points $w'$ have negative invariant and give rise to type-$\mathbf{B}$ points $\gamma_{\alpha,\beta}(w')$ as well (this follows by Lemma \ref{lem:open-bounded}). This shows that an open neighborhood of $w$ belongs to $\Sigma_{\alpha,\beta}$, contradicting the fact that $\Sigma_{\alpha,\beta}$ is a Cantor set.
\end{proof}

This shows that, while $I(w)$ may in principle take negative values, for a spectral analysis of the OPUC problem, it is sufficient to study the trace map dynamics on the invariant surfaces corresponding to non-negative values of the invariant. Hence the potential complication alluded to above is actually quite tame and does not cause any real challenges.

\medskip

Let us now establish estimates of the local Hausdorff and box-counting dimensions under the premise that $I(w)$ is large enough. Since we will deal in particular with points of the spectrum where $I$ is greater than $16$ or $4$, we give a simple condition for the existence of such points:

\begin{prop}\label{prop:largeI}
Suppose that $\alpha = 0$ and let $M > 0$ be given. Then if $\rho ^2 = \frac{1}{M+1}$, $M$ is the maximum value attained by $I$ on $\T$.
\end{prop}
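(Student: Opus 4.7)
The proof will be a direct computation specializing the formula for $I(w)$ derived at the start of Section~\ref{sec:geometric-setup} to the assumed configuration, followed by an elementary maximization. The plan is as follows. With $\alpha = 0$, we have $\rho = \sqrt{1-|\alpha|^2} = 1$ and $K = 2(1 - \mathrm{Re}(\bar\alpha\beta)) = 2$. Substituting these values into
\[
I(w) = \mathrm{Re}\, w \left(\tfrac{1}{2\rho^2} + \tfrac{1}{2\sigma^2} - \tfrac{K}{2\rho^2\sigma^2}\right) + \tfrac{K^2 - 2K}{4\rho^2\sigma^2} + \tfrac{1}{2\sigma^2} + \tfrac{1}{2\rho^2} - 1
\]
annihilates the $(K^2 - 2K)/(4\rho^2\sigma^2)$ term and causes the rest to factor. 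A short algebraic simplification should yield
\[
I(w) = \frac{(1-\sigma^2)(1 - \mathrm{Re}\, w)}{2\sigma^2}.
\]

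Next, I would maximize this over $w \in \partial\mathbb{D}$. The paper has already observed that $I$ is affine in $\mathrm{Re}\, w$, so the extrema are attained at $w = \pm 1$. Because $\sigma \in (0,1]$, the coefficient $(1-\sigma^2)/(2\sigma^2)$ is non-negative, so the factor $1 - \mathrm{Re}\, w$ should be maximized, which occurs at $w = -1$ and yields $1 - \mathrm{Re}\, w = 2$. Consequently,
\[
I_{\max} = \frac{1 - \sigma^2}{\sigma^2} = \frac{1}{\sigma^2} - 1.
\]
Setting $I_{\max} = M$ and solving gives $\sigma^2 = 1/(M+1)$, which matches the stated condition (the statement has $\rho^2$ in place of $\sigma^2$; since $\alpha = 0$ forces $\rho = 1$, this appears to be a typographical transposition of the two Szeg\H{o} parameters, and I would write the proof in terms of $\sigma^2$).

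There is essentially no obstacle: the proposition is a direct algebraic consequence of the explicit formula for the Fricke--Vogt invariant and the fact that it restricts to an affine function of $\mathrm{Re}\, w$ on the unit circle. The only points requiring a line of justification are the sign of the leading coefficient (to identify $w = -1$ rather than $w = +1$ as the maximizing point) and the verification that the algebra really does collapse to the single factored expression above, which is routine.
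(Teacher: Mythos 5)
Your proposal is correct and follows essentially the same route as the paper: specialize the formula for $I(w)$ at $\alpha=0$, observe $I$ is affine in $\mathrm{Re}\,w$, and maximize at $\mathrm{Re}\,w = -1$ to get $M = 1/\sigma^2 - 1$. You are also right that the proposition (and the paper's own proof, which asserts ``since $\alpha=0$, $\sigma=1$'') has $\rho$ and $\sigma$ transposed relative to the conventions $\rho=\sqrt{1-|\alpha|^2}$, $\sigma=\sqrt{1-|\beta|^2}$ fixed earlier; with $\alpha=0$ it is $\rho$ that equals $1$, and the free parameter appearing in $M=1/\sigma^2-1$ is $\sigma$.
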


\begin{proof}
Since $\alpha = 0$, $\sigma = 1$. Thus
$$
I(w) = \mathrm{Re} \, w \left( \frac{1}{2} - \frac{1}{2\rho ^2} \right) - \frac{1}{2} + \frac{1}{2\rho ^2}.
$$
This function attains its maximum at $\mathrm{Re}\ w = -1$, so that
$$
M = -\left( \frac{1}{2} - \frac{1}{2\rho ^2} \right) - \frac{1}{2} + \frac{1}{2 \rho ^2} = \frac{1}{\rho ^2} - 1,
$$
as required.
\end{proof}

From this proof, it is also evident that (when $\alpha = 0$) $I(1) = 0$, but that by taking $|\beta|$ close enough to $1$, the set of points on $\partial \D$ where $I$ is greater than any fixed $M$ can have Lebesgue measure arbitrarily close to $2 \pi$.

\begin{theorem}\label{theorem:HDloc-bound}
With the finite set $F_{\alpha,\beta} \subseteq \Sigma_{\alpha,\beta}$ from Theorem~\ref{theorem:hdisbc}, suppose that $w \in \Sigma_{\alpha,\beta} \setminus F_{\alpha,\beta}$. Denote $S_u (w) = 4\sqrt{I(w)} + 22$ and $S_l (w) = \frac{1}{2} \left( 2\sqrt{I(w)} - 4 + \sqrt{(2\sqrt{I(w)}-4)^2 -12} \right)$. Then,
$$
I(w) > 4 \quad \Rightarrow \quad  \hdim^{\mathrm{loc}}(\Sigma_{\alpha,\beta};w) = \lbdim(\Sigma_{\alpha,\beta}; w) \ge \frac{\log(1+\sqrt{2})}{\log (S_u (w))}
$$
and
$$
I(w) \ge 16 \quad \Rightarrow \quad \hdim^{\mathrm{loc}}(\Sigma_{\alpha,\beta};w) = \lbdim(\Sigma_{\alpha,\beta}; w) \le \frac{\log(1+\sqrt{2})}{\log S_l (w)}.
$$
\end{theorem}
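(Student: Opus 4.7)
The strategy is to reduce the claimed bounds on $\lhdim(\Sigma_{\alpha,\beta};w)$ to two-sided bounds on the horseshoe dimension $\hdim(\Lambda_V)$ with $V=I(w)$, and then to quote the explicit expansion estimates for the Fibonacci trace map that have been worked out in the Schr\"odinger--Fibonacci literature. The first (reduction) step is entirely internal to Section~\ref{sec:geometric-setup}; the hard work is packaged into the second step.

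For the reduction, I would first use Theorem~\ref{theorem:hdisbc} to replace $\lhdim$ by $\lbdim$ throughout. The excluded set $F_{\alpha,\beta}$ already contains the points of tangential intersection of $\gamma_{\alpha,\beta}$ with the center-stable manifolds (Lemma~\ref{lem:finite-tangencies}); any isolated critical points of the analytic map $\gamma_{\alpha,\beta}:\partial\D\to\R^3$ may be absorbed into it as well. Consequently, local dimensions measured on $\partial\D$ coincide with those measured along $\gamma_{\alpha,\beta}$, and Theorem~\ref{thm:horseshoe-dim} applies at the non-isolated, non-tangential point $\gamma_{\alpha,\beta}(w)\in B_\infty(\gamma_{\alpha,\beta})$ to yield
\[
\lhdim(\Sigma_{\alpha,\beta};w)=\tfrac{1}{2}\hdim(\Lambda_{I(w)}).
\]
This identity converts the statement of the theorem into the two horseshoe dimension bounds
\[
V>4 \ \Longrightarrow\ \hdim(\Lambda_V)\ge\frac{2\log(1+\sqrt2)}{\log(4\sqrt V+22)},\qquad V\ge16\ \Longrightarrow\ \hdim(\Lambda_V)\le\frac{2\log(1+\sqrt2)}{\log S_l(w)},
\]
with $V=I(w)$.

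To establish these two estimates, the plan is to use the Markov description of $\Lambda_V$ coming from Theorem~\ref{thm:hyperbolicity}: the map $T_V^{2}$ on $\Lambda_V$ is conjugate to a subshift of finite type on two symbols whose transition graph has topological entropy $\log(1+\sqrt2)$, which explains the numerator in both bounds. Bowen's equation then characterises $d:=\tfrac{1}{2}\hdim(\Lambda_V)$ as the unique value making the pressure of $-d\log\|DT_V^{n}|_{E^u}\|$ vanish. A uniform upper bound $\|DT_V^{n}|_{E^u}\|\le(4\sqrt V+22)^n$ on the trapping region containing $\Lambda_V$ then forces $d$ upward (hence the lower bound on $\hdim(\Lambda_V)$), while a matching uniform lower bound $\|DT_V^{n}|_{E^u}\|\ge S_l(w)^n$, valid only when $V\ge 16$, forces $d$ downward.

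The hard part will be deriving these two explicit expansion estimates on $DT_V$. The key input is a coordinate bound of order $\sqrt V$ on points of $\Lambda_V$ (combining the invariant identity $x^2+y^2+z^2-2xyz=V+1$ with Casdagli's trapping-region argument), which is fed into the explicit formula for $DT$; the threshold $V\ge 16$ arises precisely as the regime where the trapping region admits the clean quadratic characterisation $\lambda^2-(2\sqrt V-4)\lambda+3=0$ whose larger root is $S_l(w)$. Since these estimates depend only on the map $T$ and the geometry of the invariant surfaces $S_V$, and not at all on the particular initial curve $\gamma_{\alpha,\beta}$, they transfer essentially verbatim from \cite{Casdagli1986} (for $V\ge 16$) and \cite{DG,C} (for general $V>0$). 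Substituting them into Bowen's equation and halving by Theorem~\ref{thm:horseshoe-dim} produces the statement of the theorem.
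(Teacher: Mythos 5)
Your reduction step is exactly the paper's: apply Theorem~\ref{theorem:hdisbc} to identify $\lhdim$ with $\lbdim$ off the finite exceptional set, then invoke Theorem~\ref{thm:horseshoe-dim} to get $\lhdim(\Sigma_{\alpha,\beta};w)=\frac12\hdim(\Lambda_{I(w)})$. From there, however, the paper does something much more economical than what you propose: it simply imports the two-sided bounds on $\frac12\hdim(\Lambda_V)$ from \cite{DEGT}, where these quantities were already computed for the Fibonacci Schr\"odinger Hamiltonian (for which the curve of initial conditions $C$ lies on $S_{\lambda^2/4}$). The only new work is bookkeeping the change of variables $V=\lambda^2/4$, which turns the hypotheses $\lambda>4$ and $\lambda\ge 8$ of \cite{DEGT} into $I(w)>4$ and $I(w)\ge 16$, and turns $2\lambda+22$ into $4\sqrt{I(w)}+22$.

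Your plan for the second step is a genuinely different route, but it also has a gap that would need serious work to close. The bounds in \cite{DEGT} are not derived from Bowen's equation with a uniform pointwise bound on $\|DT_V|_{E^u}\|$; they come from periodic approximation and Raymond's combinatorial description of the band/gap hierarchy of the periodic spectra, and the number $1+\sqrt2$ is the Perron root of the transition matrix of that band hierarchy (A-bands and B-bands), not the topological entropy of a Markov coding of $T_V^2|_{\Lambda_V}$. Absent a proof that $T_V^2|_{\Lambda_V}$ is conjugate to a two-symbol SFT with entropy exactly $\log(1+\sqrt2)$, and that the derivative along $E^u$ is uniformly pinched between $S_l$ and $S_u$ on a trapping region, your pressure argument would produce a bound of the same general shape $\frac{h}{\log(\cdot)}$ but not the specific constants in the statement. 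In particular, the additive $22$ in $S_u$ is a combinatorial artifact of \cite{DEGT}'s band-length estimate, and it is not at all clear that $\|DT_V|_{E^u}\|\le 4\sqrt V+22$ pointwise. So while your framework is a legitimate alternative way to bound $\hdim(\Lambda_V)$, it does not, as written, reproduce the theorem; the paper's citation of \cite{DEGT} together with the $V=\lambda^2/4$ dictionary is both the shorter and the actually justified argument.
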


\begin{proof}
Let us consider an arbitrary $w \in \Sigma_{\alpha,\beta} \setminus F_{\alpha,\beta}$. Set $V = I(w)$, so that $\gamma_{\alpha,\beta} (w) \in S_V$. By Theorem~\ref{thm:horseshoe-dim},
$$
\hdim^{\mathrm{loc}}(\Sigma_{\alpha,\beta};w) = \lbdim(\Sigma_{\alpha,\beta}; w) = \frac12 \hdim (\Lambda_V).
$$
The quantity $\frac12 \hdim (\Lambda_V)$ was estimated in \cite{DEGT} from above and below, namely by implementing Theorem~\ref{thm:horseshoe-dim} for the particular case of the curve $C$ of initial conditions in the Schr\"odinger case (with the coupling constant $\lambda$ chosen so that $V = \frac{\lambda^2}{4}$) and then to use periodic approximation to obtain the desired dimension estimates from scaling properties of the spectra of the periodic approximants. The upper and lower bounds obtained in \cite{DEGT} yield the estimates claimed in the present theorem via the connection just described; see Figure~\ref{f.paul} for an illustration. The assumption $\lambda > 4$ (resp., $\lambda \ge 8$) necessary for the lower (resp., upper) bound from \cite{DEGT} to hold translates via $I(w) = \frac{\lambda^2}{4}$ to the assumption $I(w) > 4$ (resp., $I(w) \ge 16$) in the present context.
\end{proof}

\begin{figure}\label{f.paul}
\input{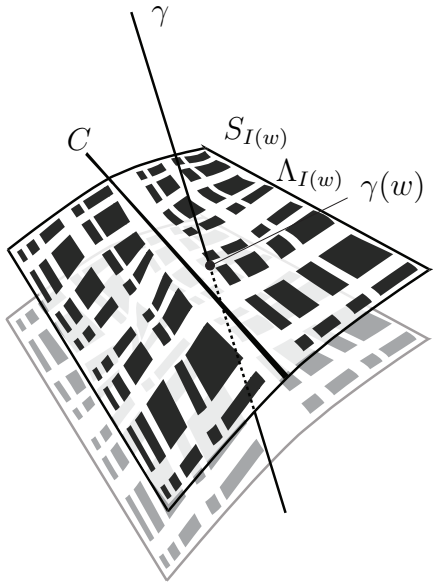}
\caption{An illustration of the idea behind the proof of Theorem \ref{theorem:HDloc-bound}.}
\label{figure:hdlocbd}
\end{figure}

\begin{remark}
Generically, the curve $\gamma$ of OPUC initial conditions and the curve $C$ of Schr\"odinger initial conditions never intersect. In fact, they only do so when $\alpha = \beta$ and the Schr\"odinger coupling constant $\lambda$ is $0$. In this case $\gamma$ is contained in $C$.
\end{remark}

\begin{proof}
The curve $C$ of Schr\"odinger initial conditions is given by $C(E) = ((E-\lambda)/2,E/2,1)$. Therefore $K = 2 \rho \sigma$ is a necessary condition for $\gamma(w)$ to lie on the curve $C$ for some $w$. Writing $\alpha = re^{i\theta}$ and $\beta = te^{i\psi}$, the equation $K = 2 \rho \sigma$ is the same as
$$
1-rt\cos(\psi-\theta)=\sqrt{(1-r^2)(1-t^2)}.
$$
Choosing arbitrary $\theta$, $r$, and $\psi$ we consider the equation as a quadratic polynomial in $t$:
$$
t^2(r^2\cos^2(\psi-\theta)+1-r^2) -t(2r\cos(\psi-\theta)) + r^2 =0.
$$
The equation has a real solution in $t$ when the discriminant is nonnegative:
$$
4(\cos^2(\psi-\theta)-1) - 4r^2(\cos^2(\psi-\theta)-1) \ge 0.
$$
This only happens when $\psi = \theta$. Verifying that this implies $r = t$ is straightforward, and it now follows that $\lambda = 0$ if $\gamma$ and $C$ intersect.
\end{proof}

As a consequence of Theorem~\ref{theorem:HDloc-bound}, we see that the local dimension of $\Sigma_{\alpha,\beta}$ near one of the points $w$ in question is asymptotic to $\frac{\log(1+\sqrt{2})}{\log \sqrt{I(w)}}$ in the large invariant regime. As far as the asymptotic behavior for small values of the invariant is concerned, we have the following result.

\begin{theorem}\label{theorem:HDloc-bound2}
There exists $I_0 > 0$ such that the following holds. With the finite set $F_{\alpha,\beta} \subseteq \Sigma_{\alpha,\beta}$ from Theorem~\ref{theorem:hdisbc}, suppose that $w \in \Sigma_{\alpha,\beta} \setminus F_{\alpha,\beta}$. Then,
$$
0 < I(w) < I_0 \quad \Rightarrow \quad  1 - \hdim^{\mathrm{loc}}(\Sigma_{\alpha,\beta};w) = 1 - \lbdim(\Sigma_{\alpha,\beta}; w) \simeq \sqrt{I(w)},
$$
where $a \simeq b$ means that $C^{-1} a \le b \le C a$ for some universal positive constant $C$.
\end{theorem}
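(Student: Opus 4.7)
The plan is to reduce the problem to an asymptotic statement about $\hdim(\Lambda_V)$ as $V \downarrow 0$ and then to invoke the corresponding small-coupling result for the Fibonacci Schr\"odinger operator.

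First, by Theorem \ref{thm:horseshoe-dim} (together with Theorem \ref{theorem:hdisbc} for the box-counting part), for every $w \in \Sigma_{\alpha,\beta}\setminus F_{\alpha,\beta}$ we have
\begin{equation*}
\hdim^{\mathrm{loc}}(\Sigma_{\alpha,\beta};w) = \lbdim(\Sigma_{\alpha,\beta};w) = \tfrac12\hdim(\Lambda_{I(w)}).
\end{equation*}
So both equalities in the conclusion reduce to a single asymptotic
\begin{equation*}
1 - \tfrac12\hdim(\Lambda_V) \simeq \sqrt{V}\qquad (0 < V < I_0),
\end{equation*}
which is entirely a statement about the dynamics of the Fibonacci trace map on $S_V$ and is in particular independent of which curve of initial conditions we chose. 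This is the essential reduction: the model dependence in the theorem (the curve $\gamma_{\alpha,\beta}$ determining $I(w)$) has been stripped away, and all that remains is to understand the fractal dimension of the hyperbolic set $\Lambda_V$ for small $V$.

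Second, I would invoke the small-coupling asymptotic for the spectrum of the Fibonacci Hamiltonian. In the Schr\"odinger setting the curve of initial conditions $C(E) = ((E-\lambda)/2,E/2,1)$ lies in $S_{\lambda^2/4}$, and Damanik--Gorodetski established (see \cite{DG}, used already in Theorem \ref{theorem:HDloc-bound}) that for small $\lambda > 0$, the Hausdorff dimension of the Fibonacci Schr\"odinger spectrum satisfies
\begin{equation*}
1 - \hdim(\Sigma_\lambda) \simeq \lambda.
\end{equation*}
By the same horseshoe/transversality argument that gave Theorem \ref{theorem:HDloc-bound}, the local Hausdorff dimension of $\Sigma_\lambda$ at any non-exceptional point equals $\tfrac12\hdim(\Lambda_{\lambda^2/4})$. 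Combining this with the displayed asymptotic (after checking that the local dimension is uniformly controlled and hence agrees up to constants with the global one in this regime, which is what lets us pass between the ``$\simeq$'' statement for $\Sigma_\lambda$ and the one for $\Lambda_{\lambda^2/4}$), we obtain
\begin{equation*}
1 - \tfrac12\hdim(\Lambda_V) \simeq \sqrt{V},\qquad V = \lambda^2/4,
\end{equation*}
for all sufficiently small $V > 0$. Setting $I_0$ to be the threshold coming from the DG estimate and combining with the first step yields the stated theorem.

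The main obstacle is the passage from the global asymptotic $1-\hdim(\Sigma_\lambda) \simeq \lambda$ in \cite{DG} to the pointwise local asymptotic $1-\tfrac12\hdim(\Lambda_V)\simeq \sqrt V$ used above: one needs that, for small $V$, the Hausdorff dimension of $\Lambda_V$ is comparable to $2\hdim(\Sigma_\lambda)$ up to universal constants with $V = \lambda^2/4$, uniformly in $V$ near $0$. This is exactly what the horseshoe reduction together with H\"older continuity of the holonomy (as used in the proof of Theorem \ref{thm:horseshoe-dim}) provides, but verifying that the H\"older constants can be controlled uniformly as $V \downarrow 0$, where the hyperbolicity of $T|_{\Lambda_V}$ degenerates, is the delicate point. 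Once that uniformity is in place, the asymptotic $\simeq \sqrt V$ follows immediately from the Schr\"odinger input, and the theorem is proved.
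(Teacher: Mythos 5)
Your argument is the paper's argument; the paper simply records it in one sentence (``This follows from \cite{DG} in the exact same way that Theorem~\ref{theorem:HDloc-bound} was derived from \cite{DEGT}''), and you have spelled out the steps correctly: reduce to $\lhdim(\Sigma_{\alpha,\beta};w)=\tfrac12\hdim(\Lambda_{I(w)})$ via Theorems~\ref{thm:horseshoe-dim} and \ref{theorem:hdisbc}, transfer through the Schr\"odinger curve $C$ sitting in $S_{\lambda^2/4}$, and import the small-coupling asymptotic $1-\hdim(\Sigma_\lambda)\simeq\lambda$ from \cite{DG}.

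One remark: the ``main obstacle'' you flag in your last paragraph is not actually an obstacle. On the Schr\"odinger side the curve $C$ lies entirely inside a single invariant surface $S_{\lambda^2/4}$, so by Theorem~\ref{thm:haus-dim} the local dimension is constant along $\Sigma_\lambda$ and equals the global one, and the identity $\hdim(\Sigma_\lambda)=\tfrac12\hdim(\Lambda_{\lambda^2/4})$ is an exact equality (it is the standard dynamically-defined-Cantor-set identity for a transversal section, used already in Theorem~\ref{theorem:HDloc-bound}), not a comparison ``up to constants.'' Likewise, on the OPUC side, Theorem~\ref{thm:horseshoe-dim} gives an exact equality $\lhdim(\Sigma_{\alpha,\beta};w)=\tfrac12\hdim(\Lambda_{I(w)})$ for each fixed non-exceptional $w$; the H\"older-continuity of the holonomy is a local argument at that fixed $V=I(w)$ and needs no uniformity as $V\downarrow 0$. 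The only place an asymptotic estimate enters is in the input $1-\hdim(\Sigma_\lambda)\simeq\lambda$ from \cite{DG}, which already contains all the ``degenerating hyperbolicity'' analysis; you do not have to redo it. So the proof is complete without the uniform H\"older control you were worried about.
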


\begin{proof}
This follows from \cite{DG} in the exact same way that Theorem~\ref{theorem:HDloc-bound} was derived from \cite{DEGT}.
\end{proof}

\section{Results for Individual Elements of the Subshift}\label{sec:elements}

Up to this point, the distinction between $\omega_{\alpha,\beta}$ and an arbitrary element $\omega$ of $\Omega_{\alpha,\beta}$ was not necessary as the essential spectrum of the associated CMV matrix is the same throughout the entire family. In this section we consider quantities that may indeed depend on the choice of $\omega \in \Omega_{\alpha,\beta}$. Recall that once such an $\omega$ is chosen, it provides a sequence of Verblunsky coefficients, and hence determines a probability measure on the unit circle and a CMV matrix acting on $\ell^2(\Z_+)$. On the other hand, in analogy to the known results in the Schr\"odinger context, one expects to be able to prove uniform results, that is, results that hold uniformly for all $\omega \in \Omega_{\alpha,\beta}$. This is a consequence of the rigid subword structure of the elements of the subshift, and more specifically, the fact that most subswords of Fibonacci length are cyclic permutations of the canonical word of this length from which the
corresponding trace in the trace map approach is derived.

\subsection{Absence of Point Masses Inside the Essential Support}

In this subsection we show that the restrictions of the measures $\mu_\omega$ to their essential support are purely continuous, that is, they have no point masses. The proof proceeds by showing that for any $w \in \sigma_\mathrm{ess}(\mu_{\omega})$ (which is equal to $\Sigma_{\alpha,\beta}$), the orthonormal polynomials $\varphi_n(w)$ associated with $\mu_{\omega}$ are not $\ell^2$ at the $w$ in question, which in turn is a consequence of the so-called Gordon Lemma and boundedness of transfer matrix traces. In fact, we prove this result for all the Aleksandrov measures derived from $\mu_\omega$, that is, uniformly in the boundary condition of the half-line problem.

Let us recall the basic connection between point masses and square-summability; compare \cite[Theorem~2.7.3]{S}.

\begin{lemma}\label{l.pmchar}
Let $\mu$ be a non-trivial probability measure on $\partial \D$ and let $w \in \partial \D$. Then $\mu(\{w\}) \not= 0$ if and only if $\sum_{n \ge 0} |\varphi_n(w;\mu)|^2 < \infty$.
\end{lemma}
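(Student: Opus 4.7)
The plan is to prove the sharper quantitative identity
$$\Bigl(\sum_{n\ge 0}|\varphi_n(w;\mu)|^2\Bigr)^{-1} = \mu(\{w\}),$$
with the convention $1/\infty=0$, via the Christoffel variational principle. Writing $K_n(w,w) = \sum_{k=0}^{n}|\varphi_k(w;\mu)|^2$ for the diagonal Christoffel--Darboux kernel, the standard extremal computation (Gram--Schmidt plus the reproducing property of $K_n(\cdot,w)/K_n(w,w)$) identifies
$$\lambda_n(w) := K_n(w,w)^{-1} = \min\Bigl\{\int_{\partial\D}|P|^2\,d\mu : P\in\C[z],\ \deg P\le n,\ P(w)=1\Bigr\}.$$
Because the admissible classes are nested, $\lambda_n(w)$ is non-increasing in $n$, and its limit $\lambda_\infty(w) := \lim_n \lambda_n(w)$ equals $\bigl(\sum_{n\ge 0}|\varphi_n(w)|^2\bigr)^{-1}$. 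It therefore suffices to show $\lambda_\infty(w) = \mu(\{w\})$.

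One inequality is immediate: every competitor $P$ satisfies $\int |P|^2\,d\mu \ge |P(w)|^2 \mu(\{w\}) = \mu(\{w\})$, so $\lambda_\infty(w) \ge \mu(\{w\})$. This alone already yields the direction ``$\mu(\{w\}) > 0 \Rightarrow \sum_n|\varphi_n(w)|^2 \le 1/\mu(\{w\}) < \infty$.''

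For the reverse inequality, I will exhibit explicit near-optimal competitors built from the Fej\'er kernel concentrated at $w$. Set
$$P_N(z) := \frac{1}{N+1}\sum_{k=0}^{N}(\bar w z)^k,$$
a polynomial in $z$ of degree $N$ with $P_N(w)=1$. A direct computation gives $|P_N(z)|^2 = F_N(\bar w z)/(N+1)$ on $\partial\D$, where $F_N$ is the standard nonnegative Fej\'er kernel. In particular $0 \le |P_N(z)|^2 \le 1$ pointwise on $\partial\D$ (one has $|P_N(z)| \le 1$ even from the triangle inequality), and $|P_N(z)|^2 \to \chi_{\{w\}}(z)$ pointwise as $N\to\infty$, by the elementary concentration of the Fej\'er kernel at the origin. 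Dominated convergence then gives $\int |P_N|^2\,d\mu \to \mu(\{w\})$, so $\lambda_\infty(w) \le \mu(\{w\})$, as required.

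I do not expect a serious obstacle here; the argument is elementary and independent of the Fibonacci/CMV structure elsewhere in the paper, which is why Simon simply quotes it. The only points to verify carefully are that $P_N$ is genuinely an \emph{analytic} polynomial of degree at most $N$ (so it is a valid competitor in the minimization defining $\lambda_n(w)$) and that the pointwise bound $|P_N|^2 \le 1$ on $\partial\D$ supplies a dominating function for the limit; both are immediate from the definition.
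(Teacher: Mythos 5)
Your proof is correct. The paper does not actually prove this lemma---it simply cites \cite[Theorem~2.7.3]{S}---and the argument you give, the Christoffel variational principle $\lambda_n(w)=K_n(w,w)^{-1}\downarrow\mu(\{w\})$ together with Fej\'er-kernel trial polynomials $P_N(z)=\frac{1}{N+1}\sum_{k=0}^N(\bar w z)^k$ and dominated convergence for the upper bound, is exactly the standard proof found in Simon's book; the points you flag for verification (degree of $P_N$, the pointwise bound $|P_N|\le 1$ on $\partial\D$, and $|P_N|^2\to\chi_{\{w\}}$) all check out as stated.
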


Recall from \eqref{e.tmbasic} that
\begin{equation}\label{e.tmnstep}
\begin{pmatrix} \varphi_{n}(w;\mu) \\ \varphi_{n}^*(w;\mu) \end{pmatrix} = T_n(w;\mu) \begin{pmatrix} \varphi_{0}(w;\mu) \\ \varphi_{0}^*(w;\mu) \end{pmatrix} = T_n(w;\mu) \begin{pmatrix} 1 \\ 1 \end{pmatrix},
\end{equation}
where
\begin{equation}\label{e.tmproduct}
T_n(w;\mu) = A(\alpha_{n-1}(\mu),w) \cdots A(\alpha_0(\mu),w)
\end{equation}
and
\begin{equation}\label{e.tm1step}
A(\alpha,w) = (1-|\alpha|^2)^{-1/2} \left( \begin{array}{cc} w & - \bar{\alpha} \\ - \alpha w & 1 \end{array} \right).
\end{equation}

Recall also that the family of Aleksandrov measures $\{ \mu_\lambda : \lambda \in \partial \D \}$ associated with a non-trivial probability measure $\mu$ on $\partial \D$ is defined by
$$
\alpha_n(\mu_\lambda) = \lambda \alpha_n(\mu).
$$
Since we will be interested in the family of Aleksandrov measures associated with a measure generated by Fibonacci Verblunsky coefficients, we will need a version of \eqref{e.tmnstep}--\eqref{e.tm1step} for $\mu_\lambda$ instead of just $\mu$. It follows from \cite[Proposition~3.2.1]{S} that
\begin{equation}\label{e.tmnsteplambda}
\begin{pmatrix} \varphi_{n}(w;\mu_\lambda) \\ \varphi_{n}^*(w;\mu_\lambda) \end{pmatrix} = T_n(w;\mu) \begin{pmatrix} 1 \\ \bar \lambda \end{pmatrix}.
\end{equation}

We can now prove the main result of this subsection:

\begin{theorem}\label{t.esssuppcont}
For every $\omega \in \Omega_{\alpha,\beta}$ and $\lambda \in \partial \D$, we have $\mu_{\omega,\lambda}(\{w\}) = 0$ for every $w \in \sigma_\mathrm{ess}(\mu_{\omega,\lambda})$ {\rm (}$= \Sigma_{\alpha,\beta}${\rm )}.
\end{theorem}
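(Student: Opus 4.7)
The plan is to combine the classical link between point masses of $\mu_{\omega,\lambda}$ and $\ell^2$ behavior of the associated orthonormal polynomials with a Gordon-type argument exploiting the substitution structure of the Fibonacci word. By Lemma \ref{l.pmchar} and the identity \eqref{e.tmnsteplambda}, the theorem reduces to showing
\[
\sum_{n \ge 0}\,\bigl\|T_n(w;\mu_\omega)\,v_\lambda\bigr\|^2 = \infty, \qquad v_\lambda := (1,\bar\lambda)^T,
\]
for every $w \in \Sigma_{\alpha,\beta}$, $\omega \in \Omega_{\alpha,\beta}$, and $\lambda \in \partial \D$. Since $\|v_\lambda\| = \sqrt{2}$ uniformly in $\lambda$, the boundary parameter is inert in what follows.

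Two ingredients feed into the argument. First, for $w \in \Sigma_{\alpha,\beta}$ the Fibonacci-scale traces are uniformly bounded: the sequence $x_n(w) = \tfrac{1}{2}\,w^{-f_n/2}\,\mathrm{Tr}\,T_{f_n}(w;\mu_{\omega_{\alpha,\beta}})$ is bounded by the defining characterization of $\Sigma_{\alpha,\beta}$ recalled in Section \ref{sec:geometric-setup}, and for any other $\omega \in \Omega_{\alpha,\beta}$ the first $f_n$ letters of $\omega$ form a cyclic conjugate of the canonical $f_n$-block, so cyclicity of the trace gives a uniform estimate $|\mathrm{Tr}\,T_{f_n}(w;\mu_\omega)| \le C(w)$. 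Second, the Fibonacci identity $S^{n+2}(\alpha) = S^{n+1}(\alpha)S^n(\alpha) = S^n(\alpha)S^{n-1}(\alpha)S^n(\alpha)$ together with the prefix relation $S^n(\alpha) = S^{n-1}(\alpha)S^{n-2}(\alpha)$ implies that $S^n(\alpha)S^n(\alpha)$ is a prefix of $S^{n+2}(\alpha)$, and hence of $\omega_{\alpha,\beta}$, for every $n \ge 2$. By linear repetitivity of the Fibonacci subshift, for every $\omega \in \Omega_{\alpha,\beta}$ and every $n$ there exists a position $m_n \le C_0 f_n$ at which $S^n(\alpha)S^n(\alpha)$ occurs in $\omega$.

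Gordon's lemma is then applied via Cayley--Hamilton. The single-step matrices in \eqref{e.tm1step} satisfy $\det A(\alpha,w) = w$, so $|\det T_k(w;\mu_\omega)| = 1$ on $\partial \D$. Writing $M_n$ for the length-$f_n$ transfer matrix over the first copy of the square starting at position $m_n$ in $\omega$, Cayley--Hamilton combined with the trace bound yields
\[
\|u\| \le (C(w)+1)\,\max\bigl(\|M_n u\|,\,\|M_n^2 u\|\bigr) \qquad \text{for all } u \in \C^2.
\]
Applied with $u = T_{m_n}(w;\mu_\omega)\,v_\lambda$, this relates the norms of $T_{m_n}v_\lambda$, $T_{m_n+f_n}v_\lambda$, and $T_{m_n+2f_n}v_\lambda$. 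For $\omega = \omega_{\alpha,\beta}$ one has $m_n = 0$ and $\|T_{m_n}v_\lambda\| = \sqrt{2}$, producing an infinite sequence of indices along which $\|T_{k_n}(w;\mu_\omega)v_\lambda\| \ge \sqrt{2}/(C(w)+1)$ and hence contradicting $\ell^2$ summability.

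The hard part is extending this to arbitrary $\omega \in \Omega_{\alpha,\beta}$, where $m_n$ need not vanish. A hypothetical decay $\|T_k v_\lambda\| \to 0$ would send all three sides of the Gordon inequality to zero simultaneously, so one must supplement it with an independent lower bound $\|T_{m_n}(w;\mu_\omega)v_\lambda\| \ge c > 0$. This follows from $|\det T_{m_n}|=1$ combined with at most subexponential growth of $\|T_k(w;\mu_\omega)\|$ for $w \in \Sigma_{\alpha,\beta}$, which is in turn a consequence of the bounded trace behavior on the Fibonacci scale furnished by the trace-map analysis of Section \ref{sec:geometric-setup}. Controlling this norm growth uniformly in $\omega \in \Omega_{\alpha,\beta}$ is the technically delicate step, though entirely analogous to the Schr\"odinger setting treated, e.g., in \cite{DG}.
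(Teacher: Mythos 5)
Your proof starts identically to the paper: reduce via Lemma~\ref{l.pmchar} and \eqref{e.tmnsteplambda} to showing non-$\ell^2$ behavior of $T_n(w;\mu_\omega)v_\lambda$, get uniform trace bounds on Fibonacci-scale blocks by cyclic conjugacy of subwords, and run a two-block Gordon argument via Cayley--Hamilton. The divergence occurs in how you handle an arbitrary $\omega \in \Omega_{\alpha,\beta}$, and that is where a genuine gap appears.

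You locate a squared block $S^n(\alpha)S^n(\alpha)$ at some position $m_n \le C_0 f_n$ in $\omega$ via linear repetitivity, then apply Gordon with base point $u = T_{m_n}v_\lambda$. To extract a contradiction you need a lower bound $\|T_{m_n}v_\lambda\| \ge c > 0$ \emph{uniform in $n$}, and you propose to get it from $|\det T_{m_n}| = 1$ together with subexponential growth of $\|T_k\|$. That argument only yields $\|T_{m_n}v_\lambda\| \ge \sqrt{2}/\|T_{m_n}\| \ge \sqrt{2}\,e^{-o(m_n)}$, which still tends to zero as $m_n \to \infty$; it does not produce a constant. Consequently the Gordon inequality only bounds $\max\bigl(\|T_{m_n+f_n}v_\lambda\|,\|T_{m_n+2f_n}v_\lambda\|\bigr)$ below by a quantity that decays (subexponentially) to zero, and this does not force divergence of $\sum_k \|T_k v_\lambda\|^2$. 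So the ``technically delicate'' step you flag is not merely delicate --- as written, it fails.

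The paper avoids this entirely by invoking a stronger combinatorial fact from \cite{DL}: for \emph{every} $\omega \in \Omega_{\alpha,\beta}$ there exist infinitely many $k$ for which $\omega_{n+F_k} = \omega_n$ for $1 \le n \le F_k$ (i.e., the squared block sits at the origin), and moreover $\omega_1\ldots\omega_{F_k}$ is a cyclic permutation of the canonical Fibonacci $F_k$-block, so the trace is bounded by the same constant $C$. This forces $T_{2F_k}(w;\mu_\omega) = T_{F_k}(w;\mu_\omega)^2$, and Cayley--Hamilton applied to the unit-norm initial vector $(1,\bar\lambda)^T$ (of norm $\sqrt{2}$) gives a $k$-independent lower bound $\max(\|T_{F_k}v_\lambda\|,\|T_{2F_k}v_\lambda\|) \ge (2C\sqrt{2})^{-1}$ for infinitely many $k$. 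You should replace the linear-repetitivity step and the attempted norm lower bound by this prefix-repetition result; your identity $S^{n+2}(\alpha)=S^n(\alpha)S^{n-1}(\alpha)S^n(\alpha)$ only establishes the squared prefix for the fixed point $\omega_{\alpha,\beta}$, not for general $\omega$ in the subshift.
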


\begin{proof}
Fix $\omega \in \Omega_{\alpha,\beta}$, $\lambda \in \partial \D$, and $w \in \sigma_\mathrm{ess}(\mu_{\omega,\lambda})$. If we can show that
\begin{equation}\label{e.vpnlnil2}
\sum_{n=0}^\infty |\varphi_{n}(w;\mu_{\omega,\lambda})|^2 = \infty,
\end{equation}
the result then follows from Lemma~\ref{l.pmchar}. Since $\varphi_{n}^*(w;\mu_{\omega,\lambda}) = w^n \overline{\varphi_{n}(w;\mu_{\omega,\lambda})}$, we have $|\varphi_{n}^*(w;\mu_{\omega,\lambda})| = |\varphi_{n}(w;\mu_{\omega,\lambda})|$ for every $n \ge 0$. In other words, \eqref{e.vpnlnil2} is equivalent to
\begin{equation}\label{e.vpnlnil2mod}
\sum_{n=0}^\infty |\varphi_{n}(w;\mu_{\omega,\lambda})|^2 + |\varphi_{n}^*(w;\mu_{\omega,\lambda})|^2 = \infty.
\end{equation}

Since $\sigma_\mathrm{ess}(\mu_{\omega,\lambda})$ does not depend on $\lambda$ (i.e., $\sigma_\mathrm{ess}(\mu_{\omega,\lambda}) = \sigma_\mathrm{ess}(\mu_{\omega,1}) = \Sigma_{\alpha,\beta}$) by \cite[Theorem~3.2.16]{S}, it follows from what we already know that for the special element $\omega_{\alpha,\beta}$ of $\Omega_{\alpha,\beta}$ that is invariant under the Fibonacci substitution, $|\mathrm{Tr} \; T_{F_k}(w;\mu_{\omega_{\alpha,\beta},1})|$ remains bounded as $k \to \infty$, say by $C$. It was shown in \cite{DL} that for the given $\omega$, there are infinitely many $k$ such that $\omega_{n+F_k} = \omega_n$ for $1 \le n \le F_k$ and $\omega_1 \ldots \omega_{F_k}$ is a subword of $\omega^*_1 \ldots \omega^*_{2F_k}$, that is, it is a cyclic permutation of $\omega^*_1 \ldots \omega^*_{F_k}$. By cyclic invariance of the trace, it follows that $|\mathrm{Tr} \; T_{F_k}(w;\mu_{\omega,1})| \le C$ for these infinitely many values of $k$.

Now, for such $k$'s, \eqref{e.tmnsteplambda} implies that
$$
\begin{pmatrix} \varphi_{F_k}(w;\mu_{\omega,\lambda}) \\ \varphi_{F_k}^*(w;\mu_{\omega,\lambda}) \end{pmatrix} = T_{F_k}(w;\mu_{\omega,1}) \begin{pmatrix} 1 \\ \bar \lambda \end{pmatrix}
$$
and
$$
\begin{pmatrix} \varphi_{2F_k}(w;\mu_{\omega,\lambda}) \\ \varphi_{2F_k}^*(w;\mu_{\omega,\lambda}) \end{pmatrix} = T_{2F_k}(w;\mu_{\omega,1}) \begin{pmatrix} 1 \\ \bar \lambda \end{pmatrix} = T_{F_k}(w;\mu_{\omega,1})^2 \begin{pmatrix} 1 \\ \bar \lambda \end{pmatrix},
$$
where we also used \eqref{e.tmproduct}. By the Cayley-Hamilton Theorem, this gives
$$
\max \left\{ \left\| \begin{pmatrix} \varphi_{F_k}(w;\mu_{\omega,\lambda}) \\ \varphi_{F_k}^*(w;\mu_{\omega,\lambda}) \end{pmatrix} \right\| , \left\| \begin{pmatrix} \varphi_{2F_k}(w;\mu_{\omega,\lambda}) \\ \varphi_{2F_k}^*(w;\mu_{\omega,\lambda}) \end{pmatrix} \right\| \right\} \ge \frac{1}{2C \sqrt{2}}.
$$
Since we have this estimate for infinitely many $k$, \eqref{e.vpnlnil2mod} follows.
\end{proof}

Clearly, the support of $\mu_{\omega,\lambda}$ is the disjoint union of $\sigma_\mathrm{ess}(\mu_{\omega,\lambda})$ and the isolated points outside this set that have non-zero weight with respect to $\mu_{\omega,\lambda}$. These isolated points are clearly point masses for the measure, while Theorem~\ref{t.esssuppcont} shows that $\mu_{\omega,\lambda}$ is purely singular continuous on the zero-measure set $\sigma_\mathrm{ess}(\mu_{\omega,\lambda})$ (assuming the non-trivial case where $\alpha,\beta$ are not equal). It was shown by Simon in \cite{S2} that for every $\omega \in \Omega_{\alpha,\beta}$ and Lebesgue almost every $\lambda \in \partial \D$, the measure $\mu_{\omega,\lambda}$ is pure point. That is, combining the two results, we see that it can happen that one of these two components has zero weight, and in fact it often does so.

Another consequence of Theorem~\ref{t.esssuppcont} is that the natural two-sided extension of any non-trivial Fibonacci CMV matrix has purely singular continuous spectrum. This follows since the two-sided extension has purely essential spectrum, which is equal to the essential spectrum of the one-sided matrix.

\subsection{Growth Estimates for the Orthogonal Polynomials}

Equation~\eqref{e.tmnsteplambda} shows that growth (and decay) properties of the normalized orthogonal polynomials are intimately connected to growth properties of the transfer matrices. For $n$ a Fibonacci number and $w \in \Sigma_{\alpha,\beta}$, the traces of these matrices are bounded in $n$ by an $I(w)$-dependent bound. The recursion~\eqref{e.tracemaprecursion} can then be used to derive norm estimates from these trace estimates, as observed by Iochum-Testard \cite{IT} and Damanik-Lenz \cite{DL3}. In this subsection, we follow this approach and derive the resulting estimates.

First we need some extra notation. Define $M(n,m,\omega,w)$ to be the transfer matrix at $w$ across $\omega \in \Omega_{a,b}$ restricted to the subword ranging from place $m$ to place $n$, multiplied by $w^{-|m-n|/2}$, so that $w^{-n/2} T_n(w; \mu_\omega) = M(n,0,\omega,w) $. Further, define $M_n(w) = M(f_n,0,x,w)$, where $x$ is the standard Fibonacci word upon which $\Omega_{a,b}$ is based. Finally let $r(w)$ be the largest root of the polynomial $x^3 - (2+2\sqrt{I(w)})x -1$. It is given by
$$r(w) = \frac{(27 + \sqrt{729-864(1+\sqrt{I(w)})^3})^{1/3}}{3 \cdot 2^{1/3}} + \frac{2 \cdot 2^{1/3} (1+\sqrt{I(w)})} {(27 + \sqrt{729-864(1+\sqrt{I(w)})^3})^{1/3}}.$$

\begin{theorem}\label{growth}
If $w \in \Sigma_{\alpha, \beta}$ and
$$
\gamma > \frac{\log((5+4\sqrt{I(w)})^{1/2} (3+2\sqrt{I(w)})r(w))}{\log\frac{\sqrt{5} + 1}{2}},
$$
then for a suitable $C$, we have that
$$
\sup_{\omega \in \Omega_{\alpha,\beta}} \norm{T_n (w;\mu_\omega)} \le Cn^{\gamma}
$$
for every $n \in \Z_+$; and therefore $|\varphi_n(w;\mu_\omega)| \le Cn^{\gamma}$.
\end{theorem}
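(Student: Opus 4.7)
The plan is to combine three ingredients. First, extract a uniform a priori bound on the traces $|x_n(w)|$ valid for $w\in\Sigma_{\alpha,\beta}$. Second, use the substitution-induced recursion $M_{n+1}=M_{n-1}M_n$ together with Cayley--Hamilton to turn trace bounds into operator-norm bounds at Fibonacci lengths. Third, pass from Fibonacci lengths and the canonical Fibonacci word to arbitrary lengths and arbitrary $\omega\in\Omega_{\alpha,\beta}$ via a Zeckendorf decomposition, carefully tracking the constants accumulated along the way.

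For step one, I would show $|x_n(w)|\le K:=1+\sqrt{I(w)}$ for every $n\ge-1$. Proposition~\ref{prop:neg-inv} gives $I(w)\ge 0$, Theorem~\ref{thm:spec-bdd} identifies $\gamma_{\alpha,\beta}(w)$ as a type-$\mathbf{B}$ point so that $\{T^n(\gamma_{\alpha,\beta}(w))\}_{n\ge 0}$ is bounded and accumulates on $\Lambda_{I(w)}$ by Corollary~\ref{cor:stable-manifold}, and Proposition~\ref{prop:Barry-points} forbids any bounded orbit from lying eventually in the escape region $\{|x|,|y|,|z|>1\}$; combining this with the invariant identity
\[
x_{n+1}^{2}+x_{n}^{2}+x_{n-1}^{2}-2x_{n+1}x_{n}x_{n-1}=1+I(w)
\]
pins $\sup_n|x_n(w)|$ at $K$ by a direct case analysis.

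For step two, the substitution identity $S^{n+1}(\alpha)=S^{n}(\alpha)S^{n-1}(\alpha)$ yields $M_{n+1}=M_{n-1}M_{n}$ for the normalized transfer matrices $M_{n}:=w^{-f_{n}/2}T_{f_{n}}(w;\mu_{\omega_{\alpha,\beta}})\in SL(2,\C)$ (the one-step matrices in \eqref{e.tmbasic} have determinant $w$, hence $\det M_n=1$). From $M_{n+2}=M_{n}M_{n+1}$ I extract $M_{n+1}=M_{n}^{-1}M_{n+2}$, and combining this with the Cayley--Hamilton identities $M_{n}^{-1}=2x_{n}I-M_{n}$ and $M_{n+2}^{2}=2x_{n+2}M_{n+2}-I$ produces the key identity
\[
M_{n+3}=2x_{n+2}M_{n+1}-2x_{n}I+M_{n}.
\]
Taking operator norms and using the bound from step one, and then setting $N_{n}:=\|M_{n}\|+1$, homogenizes the resulting inequality into $N_{n+3}\le 2KN_{n+1}+N_{n}$. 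Its characteristic polynomial is exactly $x^{3}-(2+2\sqrt{I(w)})x-1$, whose largest positive root is precisely $r(w)$, so a standard comparison with the scalar recursion yields $\|M_{n}\|\le L\cdot r(w)^{n}$; the constant $L$ is determined by the base norms $\|M_{-1}\|$, $\|M_{0}\|$, $\|M_{1}\|$, which are computed directly from the coordinates of $\gamma_{\alpha,\beta}(w)$ and contribute the factor $(5+4\sqrt{I(w)})^{1/2}$ appearing in the threshold exponent.

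For step three, I would use the Zeckendorf decomposition $n=f_{n_{1}}+\cdots+f_{n_{K}}$ with $n_{j+1}\ge n_{j}+2$. For $\omega_{\alpha,\beta}$ the prefix of length $n$ factors exactly as $S^{n_{K}}(\alpha)\cdots S^{n_{1}}(\alpha)$, producing $T_{n}=T_{f_{n_{1}}}\cdots T_{f_{n_{K}}}$; for a general $\omega\in\Omega_{\alpha,\beta}$ every prefix of $\omega$ is a subword of $\omega_{\alpha,\beta}$ (the Fibonacci subword rigidity invoked already in \cite{DL} and Section~\ref{sec:elements}), and each Fibonacci-length block is a cyclic conjugate of the corresponding $S^{n_{j}}(\alpha)$. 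A further application of Cayley--Hamilton transports the norm estimate across this conjugation at the price of a factor $(3+2\sqrt{I(w)})=1+2K$ per block. Using $f_{n}\asymp\phi^{n}$ with $\phi=(\sqrt{5}+1)/2$ to rewrite $r(w)^{n}$ as $f_{n}^{\log r(w)/\log\phi}$ and assembling the accumulated constants then reproduces the threshold $\gamma>\log[(5+4\sqrt{I(w)})^{1/2}(3+2\sqrt{I(w)})r(w)]/\log\phi$; the polynomial bound on $|\varphi_{n}(w;\mu_{\omega})|$ follows from \eqref{e.tmnstep} since $\|(1,1)^{T}\|=\sqrt{2}$. The main obstacle lies here: I must propagate the third-order linear recursion through the Zeckendorf concatenation without losing sub-exponential information, simultaneously manage the cyclic-shift conjugation required to handle a general $\omega$, and account for every multiplicative constant so that the product compounds to precisely the quantity inside the logarithm in the theorem's threshold.
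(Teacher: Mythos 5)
Your three-step plan mirrors the paper's, and Steps~1 and~2 are sound: the alternative Cayley--Hamilton identity $M_{n+3}=2x_{n+2}M_{n+1}-2x_n I+M_n$ together with the shift $N_n:=\|M_n\|+1$ produces the same characteristic polynomial $x^3-(2+2\sqrt{I})x-1$ as the paper's identity $M_n=2x_{n-1}M_{n-2}-M_{n-3}^{-1}$, so the Fibonacci-length bound $\|M_n(w)\|\le C(w)r(w)^n$ comes out the same way. However, your Step~3 has two genuine gaps.

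First, you misattribute the factor $(5+4\sqrt{I(w)})^{1/2}$. The base norms $\|M_{-1}\|,\|M_0\|,\|M_1\|$ contribute only to the multiplicative prefactor $C$ and \emph{cannot} affect the exponent threshold. The $(5+4\sqrt{I})^{1/2}$ in the threshold comes from the product estimate (the paper's Lemma~\ref{lemma:add}): expanding $M_n M_{n+k}$ via repeated Cayley--Hamilton as a linear combination of $M_{n+k}$, $M_{n+k-1}$, $M_{n+k-2}$, $I$ yields coefficient sums bounded by $(5+4\sqrt{I})(3+2\sqrt{I})^{\lfloor k/2\rfloor}$, so multiplying $K$ Zeckendorf blocks picks up $(5+4\sqrt{I})^{K}$; since the Zeckendorf gaps force $K\lesssim \frac{\log n}{2\log\phi}$, this is what becomes $n^{\log(5+4\sqrt{I})^{1/2}/\log\phi}$. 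Without this lemma you do not have a mechanism to control $\|M_{n_1}\cdots M_{n_K}\|$ better than $C^K r^{\sum n_j}$, and that does not give a polynomial bound in $n$.

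Second, and more seriously, the proposed passage from $\omega_{\alpha,\beta}$ to a general $\omega\in\Omega_{\alpha,\beta}$ via \emph{cyclic conjugation} does not work for norms. Cyclic permutation preserves traces, not norms: if $A=BC$ and $A'=CB=B^{-1}AB$ with $\det B=1$, then $\|A'\|\le\|B\|^2\|A\|$, and the conjugating block $B$ is itself a piece of transfer matrix whose norm is exactly what you are trying to bound, so the overhead is not a fixed factor of $(3+2\sqrt{I})$ per block. The paper avoids this entirely: by the Damanik--Lenz partition lemma, any finite subword of $\omega$ factors as $xyz$ where $|y|=2$ and both $x^R$ (the \emph{reversal} of $x$) and $z$ are prefixes of $\omega_{\alpha,\beta}$. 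One applies the prefix bound of Lemma~\ref{mbound} to $x^R$ and $z$, and then uses the reversal invariance $\|M(w,x^R)\|=\|M(w,x)\|$ (from \cite[Lemma~5.1]{DL}). No cyclic conjugation enters; the reversal identity is the essential ingredient you are missing.
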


As pointed out above, we follow the approach developed in the Schr\"odinger case by Iochum-Testard \cite{IT} and Damanik-Lenz \cite{DL3} and adapt it to the OPUC setting considered in this paper. The main idea is to use the trace estimates for canonical building blocks along with a partition of general finite subwords appearing in subshift elements in terms of the canonical building blocks in order to derive norm estimates for the general finite subwords.

The canonical building blocks are the prefixes of the special element $\omega_{\alpha,\beta}$ of the subshift $\Omega_{\alpha,\beta}$ that have length given by a Fibonacci number. The transfer matrices over these building blocks have traces bounded in absolute value by a constant that depends on the value of the invariant at the spectral parameter $w$. This dependence of the bound on $w$ motivates a quantitative study based on the value of $I(w)$. In this context we would like to mention also the paper \cite{DG} which worked out a quantitative version of \cite{IT} and \cite{DL3} in the Schr\"odinger case.

\begin{lemma}
\begin{enumerate}

\item If $w \in \Sigma_{\alpha, \beta}$, $|x_n(w)| \le 1 + \sqrt{I(w)}$ for all $n$.

\item For some positive $C(w)$ and for all $n \in \N$, $w \in \Sigma_{\alpha, \beta}$,
$$\norm{M_n(w)} \le C(w) r(w)^n .$$
Moreover, $\max_{w \in \Sigma_{\alpha, \beta}} C(w) =: C$ exists, and likewise for $r(w)$, so that
$$ \norm{M_n(w)} \le C r^n .$$

\end{enumerate}
\end{lemma}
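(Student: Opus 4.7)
The plan is to deduce both parts from the Fricke--Vogt invariant identity combined with Cayley--Hamilton applied to the normalized transfer matrices $M_n(w) = w^{-f_n/2} T_{f_n}(w)$, which lie in $SL(2,\mathbb{C})$ (determinant $w^{f_n}$ for $T_{f_n}$, cancelled by the $w^{-f_n/2}$ prefactor).

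For part (1), I would argue by contradiction. Suppose $M := \sup_n |x_n(w)| > 1 + \sqrt{I(w)}$, and choose $n^*$ with $|x_{n^*}|$ arbitrarily close to $M$ (taking $n^*$ arbitrarily large). Since $|x_{n^*}| > 1$, Proposition~\ref{prop:Barry-points} forces at least one of $|x_{n^*\pm 1}|$ to satisfy $|x_{n^*\pm 1}| \le 1$. If, say, $|x_{n^*-1}| > 1$, then applying Lemma~\ref{lem:escape-cond} to the type-$\mathbf{B}$ point $T^{n^*-1}\gamma_{\alpha,\beta}(w) = (x_{n^*}, x_{n^*-1}, x_{n^*-2})$ forces $|x_{n^*-2}| > |x_{n^*}|\,|x_{n^*-1}|$; iterating this estimate backward along the orbit produces a sequence $a_k = |x_{n^*-k}|$ satisfying $a_{k+1} > a_{k-1} a_k$ with $a_0, a_1 > 1$, hence growing super-exponentially in $k$ and eventually violating the bound $a_k \le M$ (a contradiction once $n^*$ is chosen large enough). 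Therefore both $|x_{n^*\pm 1}| \le 1$, and solving the Fricke--Vogt identity as a quadratic in $x_{n^*}$ yields
$$
|x_{n^*}| \le \sqrt{uv} + \sqrt{(1-u)(1-v) + I(w)}, \qquad u := x_{n^*+1}^2,\ v := x_{n^*-1}^2 \in [0,1].
$$
Cauchy--Schwarz on the unit vectors $(\sqrt u, \sqrt{1-u})$ and $(\sqrt v, \sqrt{1-v})$ in $\mathbb{R}^2$ gives $\sqrt{uv}+\sqrt{(1-u)(1-v)} \le 1$, and subadditivity $\sqrt{a+b}\le\sqrt a + \sqrt b$ then delivers $|x_{n^*}| \le 1 + \sqrt{I(w)}$, the desired contradiction.

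For part (2), the crux is the matrix identity
$$
M_{n+3} = 2 x_{n+2}\, M_{n+1} + M_n - 2 x_n\, \operatorname{Id}.
$$
This follows by combining the Fibonacci matrix recursion $M_{n+1} = M_{n-1} M_n$ with Cayley--Hamilton $M_k^2 = 2 x_k M_k - \operatorname{Id}$: these jointly yield the intermediate identities $M_{n+1} M_n = 2 x_n M_{n+1} - M_{n-1}$ and $M_{n-1} M_{n+1} = 2 x_{n-1} M_{n+1} - M_n$, and substituting into $M_{n+3} = M_{n+1} M_n M_{n+1}$ and using the trace recursion $2 x_{n+2} = 4 x_n x_{n+1} - 2 x_{n-1}$ collapses everything. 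Taking operator norms and invoking part (1) produces the linear recursion inequality
$$
\|M_{n+3}(w)\| \le 2(1+\sqrt{I(w)})\,\|M_{n+1}(w)\| + \|M_n(w)\| + 2(1+\sqrt{I(w)}),
$$
whose homogeneous part has characteristic polynomial $x^3 - (2+2\sqrt{I(w)})x - 1$. Standard companion-matrix analysis (absorbing the constant inhomogeneity into the prefactor) yields $\|M_n(w)\| \le C(w)\, r(w)^n$ with $r(w)$ the largest real root. For the uniform version, continuity of $w \mapsto I(w)$ on the compact set $\Sigma_{\alpha,\beta}$, together with uniform boundedness of the initial norms $\|M_0\|, \|M_1\|, \|M_2\|$, yields the uniform constants $C$ and $r$.

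The main obstacle I expect is the derivation of the matrix identity in part (2): it requires orchestrating the Fibonacci recursion and Cayley--Hamilton in the precise order, respecting the non-commutativity of consecutive $M_k$ and using that $M_k M_{k+1}$ and $M_{k+1} M_k$ are merely conjugate (not equal). In part (1), the subtlety is that the supremum need not be attained, so the argument must be made robust under choosing $n^*$ with $|x_{n^*}|$ only arbitrarily close to $M$; handling the "mixed" case (exactly one neighbor exceeds $1$) requires the iterative escape argument, which depends on $n^*$ being taken large enough so that the super-exponential growth of the iterated factors eventually exhausts the sup bound.
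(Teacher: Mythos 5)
The proposal is essentially correct and lands on the same Cayley--Hamilton mechanism for part~(2), but the bookkeeping differs in a way worth flagging. Your identity
$$M_{n+3} = 2x_{n+2}\,M_{n+1} + M_n - 2x_n\,\mathrm{Id}$$
is correct, and in fact it is literally the paper's identity in disguise: applying Cayley--Hamilton once more, $M_n - 2x_n\,\mathrm{Id} = -M_n^{-1}$, so after a shift this is exactly the relation the paper derives, $M_n = 2x_{n-1}M_{n-2} - M_{n-3}^{-1}$. The difference is in how norms are taken. The paper uses $\norm{M_{n-3}^{-1}} = \norm{M_{n-3}}$ (unimodularity), producing the clean homogeneous recursion $m_n = (2+2\sqrt{I})m_{n-2} + m_{n-3}$ with characteristic polynomial $x^3 - (2+2\sqrt{I})x - 1$. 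You instead bound $\norm{M_n - 2x_n\,\mathrm{Id}} \le \norm{M_n} + 2|x_n|$, which inflates the constant term and forces the inhomogeneous recursion and the extra ``absorb the inhomogeneity'' step. Your route works, but the paper's use of $\norm{M^{-1}} = \norm{M}$ avoids that detour entirely and is the intended shortcut.

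For part~(1), the paper gives no proof (it cites \cite{DG}, Lemma~5.3), so your sketch is supplying something the paper omits. The core is right: if both $|x_{n^*\pm 1}| \le 1$, solving the Fricke--Vogt identity as a quadratic in $x_{n^*}$ and the Cauchy--Schwarz/subadditivity estimate give $|x_{n^*}| \le 1 + \sqrt{I(w)}$; and Lemma~\ref{lem:escape-cond} (the forward escape condition) is precisely what forces a neighbor below $1$ at a near-supremal index. Two caveats. First, the appeal to Proposition~\ref{prop:Barry-points} is a red herring: that proposition only says the orbit does not stay in $\{|x|,|y|,|z|>1\}$ for \emph{all} time, which pins down nothing about $n^*\pm 1$. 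All the work is done by Lemma~\ref{lem:escape-cond} applied to the type-$\mathbf{B}$ iterates, which is what your argument actually uses; the Barry-points citation should be dropped. Second, the backward iteration $a_{k+2} > a_k a_{k+1}$ with $a_0$ bounded away from $1$ does give super-exponential growth and eventually exceeds $\sup|x_n|$, but only while the iteration stays in the range where $|x_n|$ is controlled (essentially $n \ge -1$). If the near-sup is achieved only at small indices, the backward chain quickly runs past $x_{-1}$ where no a~priori bound is available, and the contradiction is no longer immediate. This edge case deserves a sentence (e.g., handle the finitely many small indices separately, or observe the growth estimates downstream only require the bound on $|x_m|$ for $m$ in a suitable range).
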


\begin{proof}
This proof mimics the proof of \cite{DG}[Lemma 5.3].

We will prove part (b). Recall that $M_n (w)$ is the matrix $w^{-f_n /2} T_{f_n} (w)$, so that $x_n(w) = \frac{1}{2} \mathrm{tr} M_n (w)$. The Cayley-Hamilton theorem gives $M_n(w) ^2 +2x_n (w) M_n (w) +I = 0 $, or $M_n = 2x_n I - M_n ^{-1} $. Using the recursion for $M_n$,
$$M_n = M_{n-2} M_{n-1} = M_{n-2} (2x_{n-1} I - M_{n-1} ^{-1}) = 2x_{n-1} M_{n-2} - M_{n-3} ^{-1}. $$
Since $\norm{M_n} = \norm{M_n ^{-1}}$, the above yields
$$\norm{M_n (w)} = (2+2\sqrt{I(w)}) \norm{M_{n-2}(w)} + \norm{M_{n-3}(w)}, $$
for all $w \in \Sigma$.

Compare this estimate to the recursion
$$m_n = (2+2\sqrt{I})m_{n-2} + m_{n-3} $$
with initial conditions $m_1 = \norm{M_1 (w)}$, $m_2 = \norm{M_2 (w)}$, $m_3 = \norm{M_3 (w)}$, so that $\norm{M_n} \le m_n$. Any solution to the recursion is of the form
$$m_n = c_1(w) r_1 (w) ^n + c_2(w) r_2 (w) ^n + c_3(w) r_3 (w) ^n, $$
where the $r_j$ are the roots of the characteristic polynomial $x^3 - (2+2\sqrt{I})x -1$ to the recursion.

One can compute
$$c_1 = -\frac{-\norm{M_2} + \norm{M_1}r_2 + \norm{M_1}r_3 - \norm{M_0}r_2 r_3}{r_1(r_2 - r_1)(r_3-r_1)},$$
and similar expressions for $c_2$ and $c_3$. Taking $r_1 =r$ to be the largest root, the function
$$C(w) = c_1(w) + \frac{c_2 (w) r_2(w)}{r_1(w)} + \frac{c_3(w) r_3(w)}{r_1(w)}$$
satisfies
$$ \norm{M_n(w)} \le C(w) r(w)^n ,$$
proving the first part of the lemma. One can check that none of the roots $r_j (w)$ is ever zero, so that $C$ is a continuous function of $w$. The second part of the lemma now follows from compactness of $\Sigma_{\alpha, \beta}$.
\end{proof}

\begin{lemma}\label{lemma:add}
For all $n \ge 1$, $k \ge 0$,
$$M_n M_{n+k} = P_k ^{(1)} M_{n+k} + P_k ^{(2)} M_{n+k-1} +P_k ^{(3)} M_{n+k-2} + P_k ^{(4)} I,$$
where $P_k ^{(j)}$ are scalars depending on $n$ and $w$. Moreover, for every $n \ge 1$ and $w \in \Sigma$,
$$\sum^4_{j=1} |P_k ^{(j)} (w) | \le (5 + 4\sqrt{I(w)})(3+2\sqrt{I(w)})^{\lfloor k/2 \rfloor}.$$
\end{lemma}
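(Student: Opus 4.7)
My plan is to prove the identity by induction on $k$ in steps of two, reducing the step $k-1 \Rightarrow k+1$ via right-multiplication by $M_{n+k}$, and to control the coefficients using the trace bound $|x_n(w)| \le 1 + \sqrt{I(w)}$ from part (a) of the lemma. Throughout the proof, the two workhorses are the Fibonacci-type relation $M_{m+1} = M_{m-1} M_m$ (equivalently $M_m = M_{m-2} M_{m-1}$), and the Cayley–Hamilton identity $M_m^2 = 2x_m M_m - I$ (since $\det M_m = 1$ and $\mathrm{tr}\, M_m = 2 x_m$).

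I would first handle the two base cases. For $k=0$, Cayley–Hamilton gives $M_n M_n = 2x_n M_n - I$ directly, so $(P_0^{(1)}, P_0^{(2)}, P_0^{(3)}, P_0^{(4)}) = (2x_n, 0, 0, -1)$, and the sum of absolute values is at most $3 + 2\sqrt{I(w)} \le 5 + 4\sqrt{I(w)}$. For $k=1$, the equality $M_n M_{n+1} = M_{n+2}$ combined with two successive expansions $M_{n+1} = M_{n-1} M_n$ and $M_n^2 = 2x_n M_n - I$, together with the conserved recursion $x_{n+1} = 2 x_n x_{n-1} - x_{n-2}$, reduces $M_n M_{n+1}$ to the linear combination $2 x_{n+1} M_n + M_{n-1} - 2 x_{n-1} I$; summing gives at most $4(1+\sqrt{I(w)}) + 1 = 5 + 4\sqrt{I(w)}$, as required.

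For the inductive step, assuming the identity at level $k-1$, I right-multiply by $M_{n+k}$. Three products then arise on the right-hand side: $M_{n+k-1} M_{n+k}$, which is simply $M_{n+k+1}$ by the Fibonacci relation; $M_{n+k-2} M_{n+k}$, which by writing $M_{n+k} = M_{n+k-2} M_{n+k-1}$ and applying $M_{n+k-2}^2 = 2 x_{n+k-2} M_{n+k-2} - I$ collapses to $2 x_{n+k-2} M_{n+k} - M_{n+k-1}$; and the crucial term $M_{n+k-3} M_{n+k}$, which factors as $M_{n+k-3} M_{n+k-2} M_{n+k-1} = M_{n+k-1}^2 = 2x_{n+k-1} M_{n+k-1} - I$. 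Collecting, one reads off the recursion
\begin{align*}
P_{k+1}^{(1)} &= P_{k-1}^{(1)}, &\quad P_{k+1}^{(2)} &= 2 x_{n+k-2} P_{k-1}^{(2)} + P_{k-1}^{(4)},\\
P_{k+1}^{(3)} &= -P_{k-1}^{(2)} + 2 x_{n+k-1} P_{k-1}^{(3)}, &\quad P_{k+1}^{(4)} &= -P_{k-1}^{(3)}.
\end{align*}
Bounding $|x_m| \le 1 + \sqrt{I(w)}$ and applying the triangle inequality then gives $\sum_j |P_{k+1}^{(j)}| \le (3 + 2\sqrt{I(w)}) \sum_j |P_{k-1}^{(j)}|$, and iterating from the base cases yields the claimed estimate with the factor $(3 + 2\sqrt{I(w)})^{\lfloor k/2 \rfloor}$.

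The only real obstacle is organizing the bookkeeping so that every product that appears in the step $k-1 \to k+1$ can be written in the basis $\{M_{n+k+1}, M_{n+k}, M_{n+k-1}, I\}$; the key observation making this work is the collapse $M_{n+k-3} M_{n+k} = M_{n+k-1}^2$, without which a fourth matrix would be required. Apart from this, the estimate on the coefficient sum follows mechanically from part (a) of the lemma, and the argument is a direct CMV adaptation of the Schrödinger approach of Iochum–Testard \cite{IT} and Damanik–Lenz \cite{DL3}, with the trace bound $1 + \sqrt{I(w)}$ (rather than $\sqrt{4 + \lambda^2}$) playing the role of the spectral input.
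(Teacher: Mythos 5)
Your proof is correct and reconstructs precisely the argument of \cite[Lemma~5.3]{DG} (via Iochum--Testard and Damanik--Lenz), to which the paper simply defers with the remark that the OPRL version carries over with no change; the step-two induction on $k$, the collapse $M_{n+k-3}M_{n+k}=M_{n+k-1}^2$, and the resulting recursion for the $P_k^{(j)}$ all match. One peripheral slip: in the Schr\"odinger case the trace bound analogous to $1+\sqrt{I(w)}$ is $1+\lambda/2$ (from $I=\lambda^2/4$), not $\sqrt{4+\lambda^2}$, but this parenthetical aside has no bearing on the proof.
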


The proof of the OPRL version of this lemma carries over from \cite{DG} with no change.

\begin{lemma}\label{mbound}
If $I(w) > 0$ and $\gamma$ obeys the hypothesis of Theorem~\ref{growth}, there is a constant $K$ such that
$$\norm{T_n(w)} \le K n^{\gamma}$$
for every $n\ge1$.
\end{lemma}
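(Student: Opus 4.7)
The plan is to adapt the partition-and-telescope strategy of Iochum-Testard \cite{IT} and Damanik-Lenz \cite{DL3} to the OPUC setting, following the quantitative implementation in the Schr\"odinger case from \cite{DG}. The starting point is Zeckendorf's theorem: for any $n \ge 1$, write $n = f_{k_1} + f_{k_2} + \cdots + f_{k_s}$ with $k_1 > k_2 > \cdots > k_s$ and $k_j - k_{j+1} \ge 2$. Two elementary consequences will be used repeatedly: $s \le k_1/2 + 1$ (from the gap condition) and $k_1 \le \log_\phi n + O(1)$ with $\phi = (1+\sqrt{5})/2$ (since $n \ge f_{k_1}$).

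First I would use the self-similar subword structure of the elements of $\Omega_{\alpha,\beta}$, together with the cyclic invariance of the trace already exploited in the proof of Theorem~\ref{t.esssuppcont}, to factor the relevant transfer matrix across the length-$n$ prefix as
$$T_n(w;\mu_\omega) = w^{n/2}\,\widetilde M_{k_1}(w)\widetilde M_{k_2}(w) \cdots \widetilde M_{k_s}(w),$$
where each $\widetilde M_{k_j}(w)$ is conjugate to (a cyclic permutation of) $M_{k_j}(w)$; in particular $\|\widetilde M_{k_j}(w)\| \le C r(w)^{k_j}$ and $|\mathrm{tr}\,\widetilde M_{k_j}(w)|\le 2(1+\sqrt{I(w)})$ uniformly in $\omega \in \Omega_{\alpha,\beta}$.

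Next I would iterate Lemma~\ref{lemma:add} to collapse this product into a linear combination of single matrices. Each application collapses an adjacent pair with gap $k$ into at most four single-matrix terms whose scalar coefficients have absolute values summing to at most $(5+4\sqrt{I(w)})(3+2\sqrt{I(w)})^{\lfloor k/2\rfloor}$. After $s-1$ such reductions the product becomes a linear combination of single matrices $\widetilde M_j(w)$ with $j \le k_1$, and the sum of absolute values of the resulting coefficients is bounded by $(5+4\sqrt{I(w)})^{s-1}$ times a product of $(3+2\sqrt{I(w)})$-factors whose total exponent telescopes (the collapsed gaps sum to at most $k_1 - k_s \le k_1$) to at most $k_1/2$. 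Since $s - 1 \le k_1/2$ and $\|\widetilde M_j(w)\| \le C r(w)^{k_1}$, this yields
$$\|T_n(w;\mu_\omega)\| \le C\bigl[(5+4\sqrt{I(w)})^{1/2}(3+2\sqrt{I(w)})^{1/2}\,r(w)\bigr]^{k_1}.$$
Combined with $k_1 \le \log_\phi n + O(1)$, this gives $\|T_n(w;\mu_\omega)\| \le K n^\gamma$ for any $\gamma > \log \mu(w)/\log\phi$, where $\mu(w) = (5+4\sqrt{I(w)})^{1/2}(3+2\sqrt{I(w)})r(w)$ is the exponent appearing in the hypothesis of Theorem~\ref{growth}; the polynomial bound on $|\varphi_n(w;\mu_\omega)|$ then follows from \eqref{e.tmnsteplambda} applied with $\lambda = 1$.

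The hardest part will be the bookkeeping in the telescoping step: each application of Lemma~\ref{lemma:add} not only produces four terms but also shifts some surviving indices down by one or two (and the identity terms $P_k^{(4)}I$ shorten the chain entirely), so the gaps in subsequent subproducts are not literally the original Zeckendorf gaps. Verifying that the total exponent of $(3+2\sqrt{I(w)})$ and the total number of $(5+4\sqrt{I(w)})$-factors each remain bounded by $k_1/2$ under these perturbations requires a careful induction paralleling that of \cite[Section~5]{DG}; this is where the Zeckendorf-gap condition $k_j - k_{j+1}\ge 2$ enters essentially. A secondary (and more routine) issue is the initial decomposition into canonical Fibonacci blocks uniformly in $\omega\in\Omega_{\alpha,\beta}$, which relies on the combinatorial lemma from \cite{DL} already invoked in the proof of Theorem~\ref{t.esssuppcont}.
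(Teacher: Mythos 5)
Your overall plan -- Zeckendorf decomposition followed by iterated application of Lemma~\ref{lemma:add}, then an asymptotic computation of the exponent -- is the same as the paper's, which in fact just defers to \cite{DG} for the bookkeeping. The genuine problem is in the setup. Lemma~\ref{mbound} concerns $T_n(w)$ over the \emph{canonical} Fibonacci word $\omega_{\alpha,\beta}$ only; the extension to arbitrary $\omega\in\Omega_{\alpha,\beta}$ is carried out afterwards, in the proof of Theorem~\ref{growth}, via the three-piece decomposition $\omega_m\cdots\omega_n = xyz$ from \cite{DL}, with $x^R$ and $z$ prefixes of $\omega_{\alpha,\beta}$ and $|y|\le 2$ -- not via a Zeckendorf decomposition into rotated blocks. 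For the canonical word, the prefix of length $n=f_{k_1}+\cdots+f_{k_s}$ is literally the concatenation $u_{k_1}u_{k_2}\cdots u_{k_s}$ of the canonical blocks $u_j=S^j(\alpha)$, so $T_n$ factors directly into the matrices $M_{k_j}$ from the preceding lemma, with no conjugation needed.

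Your version instead factors $T_n(w;\mu_\omega)$ for general $\omega$ into matrices $\widetilde M_{k_j}$ described as conjugate to cyclic permutations of $M_{k_j}$, and then asserts $\|\widetilde M_{k_j}(w)\|\le Cr(w)^{k_j}$. This does not follow. Cyclic invariance of the trace gives the trace bound (which is all the proof of Theorem~\ref{t.esssuppcont} needed), but a conjugation $\widetilde M = PM P^{-1}$ does not preserve norms, and $\|P\|,\|P^{-1}\|$ are not uniformly controlled; worse, the norm estimate $\|M_n\|\le Cr^n$ from the preceding lemma is proved using the recursion $M_n=M_{n-2}M_{n-1}$ together with Cayley--Hamilton, and this recursive structure is specific to the canonical blocks and simply fails for their rotations. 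It is also not true that an arbitrary prefix of a general $\omega\in\Omega_{\alpha,\beta}$ decomposes as a concatenation of cyclic rotations of canonical Fibonacci blocks in the required Zeckendorf pattern. Once you restrict the lemma to the canonical word, these obstructions disappear and your telescoping argument -- including your (correct) warning that the shifting of indices and gaps under repeated applications of Lemma~\ref{lemma:add} is the delicate part, to be handled by the induction from \cite[Section~5]{DG} -- is exactly the intended proof. (Minor slip: your intermediate display bounds the product by $[(5+4\sqrt{I})^{1/2}(3+2\sqrt{I})^{1/2}r]^{k_1}$, but the $(3+2\sqrt{I})$ factor should appear to the first power, as in the exponent you and the paper ultimately state; this is precisely the bookkeeping you flagged as subtle.)
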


The proof of this lemma also carries over from \cite{DG}. One first writes $n$ as a sum of Fibonacci numbers, in the so-called Zeckendorf representation, to obtain $\|T_n\| = \|M_{n_0} M_{n_1} \dots M_{n_K}\|$. The construction is such that $F_{n_K} \le n < F_{n_K + 1}$ and $F_{2(K-1)} \le n$. Inducting on $K$, we can apply the estimate from Lemma~\ref{lemma:add} to get $\norm{M_{n_0} M_{n_1} \dots M_{n_{K}}} \le C(5+4\sqrt{I})^{K} \sqrt{3+2\sqrt{I}}^{2(K-1) - n_0} (r \sqrt{3+2\sqrt{I}})^{n_{K}}$. It follows that
\begin{align*}
&\limsup_{n\to\infty} \frac{\log\norm{T_n}}{\log n} \\
& \le \limsup_{n\to\infty} \frac{\log\left(  C(5+4\sqrt{I})^{K} \sqrt{3+2\sqrt{I}}^{2(K-1) - n_0} (r \sqrt{3 + 2\sqrt{I}})^{n_{K}}\right)}{\log n} \\
& \le \limsup_{n\to\infty} \frac{K(n) \log(5+4\sqrt{I}) + (2(K-1) - n_0) \log\sqrt{3+2\sqrt{I}} +  n_{K}\log(r \sqrt{3+2\sqrt{I}})}{\log n} \\
& \le \frac{\log((5+4\sqrt{I})^{1/2} (3+2\sqrt{I})r)}{\log \frac{\sqrt{5} +1}{2}} ,
\end{align*}
which proves the estimate.

\begin{proof}[Proof of Theorem~\ref{growth}]
Given $\omega \in \Omega_{a,b}$, $n>m$, $w \in \Sigma_{\alpha, \beta}$, and $\gamma$ obeying the hypothesis of the theorem, we will show that
$$\norm{M(n,m,\omega,w)} \le C (n-m) ^{\gamma}$$
for a suitable constant $C$, which implies the theorem.

As explained in \cite{DL}, we can write
$$\omega_n\dots\omega_m = xyz,$$
where $y$ is a word of length two in $a, b$ and $x^R$ (the reverse of $x$) and $z$ are prefixes of $\omega_{\alpha,\beta} $. Thus
$$\norm{M(w,\omega)} \le \norm{M(w,x)}\cdot\norm{M(w,y)}\cdot\norm{M(w,z)}.$$
Now $\norm{M(w,x^R)} = \norm{M(w,x)}$ (\cite[Lemma 5.1]{DL}). Moreover, from Lemma~\ref{mbound}, $\norm{M(w, x^R)} \le K |x|^\gamma$ and $\norm{M(w,z)} \le K |z|^\gamma $. This proves the theorem.
\end{proof}


\appendix

\section*{Acknowledgement}

W. N. Y. would like to thank Anton Gorodetski for financial support and helpful discussions.


\appendix

\section{Background on Uniform, Partial and Normal Hyperbolicity}\label{a1}

In this appendix we give a (very brief) overview of the necessary notions from the theory of hyperbolic dynamical systems. The discussion below appeared originally in \cite{Y}.

\subsection{Properties of Locally Maximal Hyperbolic Sets}\label{a_1}

A more detailed discussion can be found in \cite{Hirsch1968, Hirsch1970, Hirsch1977, Hasselblatt2002b, Hasselblatt2002}.

A closed invariant set $\Lambda\subset M$ of a diffeomorphism $f: M\rightarrow M$ of a smooth manifold $M$ is called \textit{hyperbolic} if for each $x\in\Lambda$, there exists the splitting $T_x\Lambda = E_x^s\oplus E_x^u$ invariant under the differential $Df$, and $Df$ exponentially contracts vectors in $E_x^s$ and exponentially expands vectors in $E_x^u$. If $\Lambda = M$, then $f$ is called an \textit{Anosov diffeomorphism}.

The set $\Lambda$ is called \textit{locally maximal} if there exists a neighborhood $U$ of $\Lambda$ such that
\begin{align}\label{part2_eq1}
\Lambda = \bigcap_{n\in\Z}f^n(U).
\end{align}
The set $\Lambda$ is called \textit{transitive} if it contains a dense orbit. It isn't hard to prove that the splitting $E_x^s\oplus E_x^u$ depends continuously on $x\in\Lambda$, hence $\dim(E_x^{s,u})$ is locally constant. If $\Lambda$ is transitive, then $\dim(E_x^{s,u})$ is constant on $\Lambda$. We call the splitting $E_x^s\oplus E_x^u$ a $(k_x^s, k_x^u)$ splitting if $\dim(E_x^{s,u}) = k^{s,u}$, respectively. In case $\Lambda$ is transitive, we'll simply write $(k^s, k^u)$.
\begin{definition}\label{basic_set}
We call $\Lambda\subset M$ a \textit{basic set} for $f\in\mathrm{Diff}^r(M)$, $r\geq 1$, if $\Lambda$ is a locally maximal invariant transitive hyperbolic set for $f$.
\end{definition}
Suppose $\Lambda$ is a basic set for $f$ with $(1,1)$ splitting. Then the following holds.

\subsubsection{Stability}\label{a_1_1}

Let $U$ be as in \eqref{part2_eq1}. Then there exists $\mathcal{U}\subset \mathrm{Diff}^1(M)$ open, containing $f$, such that for all $g\in\mathcal{U}$,
\begin{align}\label{part2_eq2}
\Lambda_g = \bigcap_{n\in\Z}g^n(U)
\end{align}
is $g$-invariant transitive hyperbolic set; moreover, there exists a (unique) homeomorphism $H_g:\Lambda\rightarrow\Lambda_g$ such that
\begin{align}\label{part2_eq3}
H_g\circ f|_{\Lambda} = g|_{\Lambda_g}\circ H_g.
\end{align}
Also $H_g$ can be taken arbitrarily close to the identity by taking $\mathcal{U}$ sufficiently small. In this case $g$ is said to be \textit{conjugate to} $f$, and $H_g$ is said to be \textit{the conjugacy}.

\subsubsection{Stable and Unstable Invariant Manifolds}\label{a_1_2}

Let $\epsilon > 0$ be small. For each $x\in\Lambda$ define the \textit{local stable} and \textit{local unstable} manifolds at $x$:
\begin{align*}
W_\epsilon^s(x) = \set{y\in M: d(f^n(x),f^n(y))\leq \epsilon \text{ for all }n\geq 0},
\end{align*}
\begin{align*}
W_\epsilon^u(x) = \set{y\in M: d(f^n(x),f^n(y))\leq \epsilon \text{ for all }n\leq 0}.
\end{align*}
We sometimes do not specify $\epsilon$ and write
\begin{align*}
W_\mathrm{loc}^s(x)\text{\hspace{5mm}and\hspace{5mm}}W_\mathrm{loc}^u(x)
\end{align*}
for $W_\epsilon^s(x)$ and $W_\epsilon^u(x)$, respectively, for (unspecified) small enough $\epsilon > 0$. For all $x\in\Lambda$, $W_\mathrm{loc}^{s,u}(x)$ is an embedded $C^r$ disc with $T_xW_\mathrm{loc}^{s,u}(x) = E_x^{s,u}$. The \textit{global stable} and \textit{global unstable} manifolds
\begin{align}\label{part2_eq4}
W^s(x) = \bigcup_{n\in\N}f^{-n}(W_\mathrm{loc}^s(x))\text{\hspace{5mm}and\hspace{5mm}}W^u(x) = \bigcup_{n\in\N}f^{n}(W_\mathrm{loc}^u(x))
\end{align}
are injectively immersed $C^r$ submanifolds of $M$. Define also the stable and unstable sets of $\Lambda$:
\begin{align}\label{part2_eq5}
W^s(\Lambda) = \bigcup_{x\in\Lambda}W^s(x)\text{\hspace{5mm}and\hspace{5mm}}W^u(\Lambda) = \bigcup_{x\in\Lambda}W^u(x).
\end{align}

If $\Lambda$ is compact, there exists $\epsilon > 0$ such that for any $x,y\in\Lambda$, $W_\epsilon^s(x)\cap W_\epsilon^u(y)$ consists of at most one point, and there exists $\delta > 0$ such that whenever $d(x,y) < \delta$, $x,y\in\Lambda$, then $W_\epsilon^s(x)\cap W_\epsilon^u(y)\neq \emptyset$. If in addition $\Lambda$ is locally maximal, then $W_\epsilon^s(x)\cap W_\epsilon^u(y)\in\Lambda$.

The stable and unstable manifolds $W_\mathrm{loc}^{s,u}(x)$ depend continuously on $x$ in the sense that there exists $\Phi^{s,u}:\Lambda\rightarrow\mathrm{Emb}^r(\R, M)$ continuous, with $\Phi^{s,u}(x)$ a neighborhood of $x$ along $W_\mathrm{loc}^{s,u}(x)$, where $\mathrm{Emb}^r(\R, M)$ is the set of $C^r$ embeddings of $\R$ into $M$ \cite[Theorem 3.2]{Hirsch1968}.

The manifolds also depend continuously on the diffeomorphism in the following sense. For all $g\in\mathrm{Diff}^r(M)$ $C^r$ close to $f$, define
$\Phi_g^{s,u}:\Lambda_g\rightarrow\mathrm{Emb}^r(\R,M)$ as we defined $\Phi^{s,u}$ above. Then define
\begin{align*}
\tilde{\Phi}_g^{s,u}:\Lambda\rightarrow\mathrm{Emb}^r(\R, M)
\end{align*}
by
\begin{align*}
\tilde{\Phi}_g^{s,u} = \Phi_g^{s,u}\circ H_g.
\end{align*}
Then $\tilde{\Phi}^{s,u}_g$ depends continuously on $g$ \cite[Theorem 7.4]{Hirsch1968}.

\subsubsection{Fundamental Domains}\label{a_1_2_}

Along every stable and unstable manifold, one can construct the so-called \textit{fundamental domains} as follows. Let $W^s(x)$ be the stable manifold at $x$. Let $y\in W^s(x)$. We call the arc $\gamma$ along $W^s(x)$ with endpoints $y$ and $f^{-1}(y)$ a \textit{fundamental domain}. The following holds.
\begin{itemize}
\item $f(\gamma)\cap W^s(x) = y$ and $f^{-1}(\gamma)\cap W^s(x) = f^{-1}(y)$, and for any $k\in\Z$, if $k < -1$, then $f^k(\gamma)\cap W^s(x) = \emptyset$; if $k > 1$ then $f^k(\gamma)\cap W^s(x) = \emptyset$ iff $x\neq y$;
\item For any $z\in W^s(x)$, if for some $k\in\N$, $f^k(z)$ lies on the arc along $W^s(x)$ that connects $x$ and $y$, then there exists $n\in\N$, $n\leq k$, such that $f^n(z)\in\gamma$.
\end{itemize}
Similar results hold for the unstable manifolds, after replacing $f$ with $f^{-1}$.

\subsubsection{Invariant Foliations}\label{a_1_3}

A stable foliation for $\Lambda$ is a foliation $\mathcal{F}^s$ of a neighborhood of $\Lambda$ such that
\begin{enumerate}
\item for each $x\in\Lambda$, $\mathcal{F}(x)$, the leaf containing $x$, is tangent to $E_x^s$;
\item for each $x$ sufficiently close to $\Lambda$, $f(\mathcal{F}^s(x))\subset\mathcal{F}^s(f(x))$.
\end{enumerate}
An unstable foliation $\mathcal{F}^u$ is defined similarly.

For a locally maximal hyperbolic set $\Lambda\subset M$ for $f\in\mathrm{Diff}^1(M)$, $\dim(M) = 2$, stable and unstable $C^0$ foliations with $C^1$ leaves can be constructed; in case $f\in\mathrm{Diff}^2(M)$, $C^1$ invariant foliations exist (see \cite[A.1]{Palis1993} and the references therein).

\subsubsection{Local Hausdorff and Box-Counting Dimensions}\label{a_1_4}

For $x\in\Lambda$ and $\epsilon > 0$, consider the set $W_\epsilon^{s,u}\cap\Lambda$. Its Hausdorff dimension is independent of $x\in\Lambda$ and $\epsilon > 0$.

Let
\begin{align}\label{part2_eq6}
h^{s,u}(\Lambda) = \dim_H(W_\epsilon^{s,u}(x)\cap\Lambda).
\end{align}
For properly chosen $\epsilon > 0$, the sets $W_\epsilon^{s,u}(x)\cap\Lambda$ are dynamically defined Cantor sets, so
\begin{align*}
h^{s,u}(\Lambda) < 1
\end{align*}
(see \cite[Chapter 4]{Palis1993} ). Moreover, $h^{s,u}$ depends continuously on the diffeomorphism in the $C^1$-topology \cite{Manning1983}. In fact, when $\dim(M) = 2$, these are $C^{r-1}$ functions of $f\in\mathrm{Diff}^r(M)$, for $r\geq 2$ \cite{Mane1990}.

Denote the box-counting dimension of a set $\Gamma$ by $\dim_{\mathrm{Box}}(\Gamma)$. Then
\begin{align*}
\dim_H(W^{s,u}_\epsilon(x)\cap \Lambda) = \dim_{\mathrm{Box}}(W^{s,u}_\epsilon(x)\cap \Lambda)
\end{align*}
(see \cite{Manning1983, Takens1988}).
\begin{remark}
For hyperbolic sets in dimension greater than two, many of these results do not hold in general; see \cite{Pesin1997} for more details.
\end{remark}

\subsection{Partial Hyperbolicity}\label{a_2}

For a more detailed discussion, see \cite{Pesin2004, Hasselblatt2006}.

An invariant set $\Lambda\subset M$ of a diffeomorphism $f\in\mathrm{Diff}^r(M)$, $r\geq 1$, is called \textit{partially hyperbolic (in the narrow sense)} if for each $x\in\Lambda$ there exists a splitting $T_xM = E_x^s\oplus E_x^c\oplus E_x^u$ invariant under $Df$, and $Df$ exponentially contracts vectors in $E_x^s$, exponentially expands vectors in $E_x^u$, and $Df$ may contract or expand vectors from $E_x^c$, but not as fast as in $E_x^{s,u}$. We call the splitting $(k_x^s, k_x^c, k_x^u)$ splitting if $\dim(E_x^{s,c,u}) = k_x^{s,c,u}$, respectively. We'll write $(k^s,k^c,k^u)$ if the dimension of subspaces does not depend on the point.

\subsection{Normal Hyperbolicity}\label{a_3}

For a more detailed discussion and proofs see \cite{Hirsch1977} and also \cite{Pesin2004}.

Let $M$ be a smooth Riemannian manifold, compact, connected and without boundary. Let $f\in\mathrm{Diff}^r(M)$, $r\geq 1$. Let $N$ be a compact smooth submanifold of $M$, invariant under $f$. We call $f$ \textit{normally hyperbolic} on $N$ if $f$ is partially hyperbolic on $N$. That is, for each $x\in N$,
\begin{align*}
T_xM = E_x^s\oplus E_x^c\oplus E_x^u
\end{align*}
with $E_x^c = T_xN$. Here $E_x^{s,c,u}$ is as in Section \ref{a_2}. Hence for each $x\in N$ one can construct local stable and unstable manifolds $W_\epsilon^s(x)$ and $W_\epsilon^u(x)$, respectively, such that
\begin{enumerate}
\item $x\in W_\mathrm{loc}^s(x)\cap W_\mathrm{loc}^u(x)$;
\item $T_x W_\mathrm{loc}^s(x) = E^s(x)$, $T_xW_\mathrm{loc}^u(x) = E^u(x)$;
\item for $n\geq 0$,
\begin{align*}
d(f^n(x),f^n(y))\xrightarrow[n\rightarrow\infty]{}0\text{ for all }y\in W_\mathrm{loc}^s(x),
\end{align*}
\begin{align*}
d(f^{-n}(x),f^{-n}(y))\xrightarrow[n\rightarrow\infty]{}0\text{ for all }y\in W_\mathrm{loc}^u(x).
\end{align*}
\end{enumerate}
(For the proof see \cite[Theorem 4.3]{Pesin2004}).
These can then be extended globally by
\begin{align*}
W^s(x)& = \bigcup_{n\in\N}f^{-n}f(W^s_\mathrm{loc}(x))\\
W^u(x)& = \bigcup_{n\in\N}f^n(W^u_\mathrm{loc}(x))
\end{align*}
Set
\begin{align*}
W_\mathrm{loc}^{cs}(N) = \bigcup_{x\in N}W_\mathrm{loc}^s(x)\text{\hspace{5mm}and\hspace{5mm}} W_\mathrm{loc}^{cu}(x) = \bigcup_{x\in N}W_\mathrm{loc}^u(x).
\end{align*}
\begin{theorem}[Hirsch, Pugh and Shub \cite{Hirsch1977}]\label{a1_thm1}
The sets $W_\mathrm{loc}^{cs}(N)$ and $W_\mathrm{loc}^{cu}(N)$, restricted to a neighborhood of $N$, are smooth submanifolds of $M$. Moreover,
\begin{enumerate}
\item $W^\mathrm{cs}_\mathrm{loc}(N)$ is $f$-invariant and $W^\mathrm{cu}_\mathrm{loc}$ is $f^{-1}$-invariant;
\item $N = W^\mathrm{cs}_\mathrm{loc}(N)\bigcap W^\mathrm{cu}_\mathrm{loc}(N)$;
\item For every $x\in N$, $T_x W^\mathrm{cs, cu}_\mathrm{loc}(N) = E_x^{s,u}\oplus T_xN$;
\item $W^\mathrm{cs}_\mathrm{loc}(N)$ ($W^\mathrm{cu}_\mathrm{loc}(N)$) is the only $f$-invariant ($f^{-1}$-invariant) set in a neighborhood of $N$;
\item $W^\mathrm{cs}_\mathrm{loc}(N)$ (respectively, $W^\mathrm{cu}_\mathrm{loc}(N)$) consists precisely of those points $y\in M$ such that for all $n\geq 0$ (respectively, $n\leq 0$), $d(f^n(x),f^n(y)) < \epsilon$ for some $\epsilon > 0$.
\item $W^\mathrm{cs,cu}_\mathrm{loc}(N)$ is foliated by $\set{W_\mathrm{loc}^\mathrm{s,u}(x)}_{x\in N}$.
\end{enumerate}
\end{theorem}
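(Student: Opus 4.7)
The plan is to construct $W^\mathrm{cs}_\mathrm{loc}(N)$ (and symmetrically $W^\mathrm{cu}_\mathrm{loc}(N)$) as the graph of a smooth section obtained via the Hirsch--Pugh--Shub graph transform, and then to read off properties (1)--(6) directly from the construction. First, I would invoke the tubular neighborhood theorem to identify a neighborhood $U$ of $N$ in $M$ with a neighborhood of the zero section of the normal bundle $E^s \oplus E^u \to N$. Since $f(N)=N$ and $Df$ preserves the splitting $E^s\oplus E^c\oplus E^u$ with $E^c = TN$, in these coordinates $f$ is a small perturbation of the bundle map induced by $Df|_N$; in particular it contracts the $E^s$-fibers and expands the $E^u$-fibers at uniform rates that dominate the rates of $f|_N$ along the base. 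This is precisely the content of normal hyperbolicity in the narrow sense.

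Second, define a graph transform $\Gamma$ on the Banach space $\mathcal{X}$ of continuous Lipschitz sections $\sigma$ from the $E^s$-disk bundle over $N$ to the $E^u$-disk bundle (with small enough Lipschitz constant), by requiring that $\mathrm{graph}(\Gamma(\sigma)) = f^{-1}(\mathrm{graph}(\sigma)) \cap U$, suitably modified off $N$ so that the image lies again in $U$. A standard cone-field calculation, exploiting that the expansion on $E^u$ strictly dominates any contraction/expansion tangent to $N$, shows $\Gamma$ is a contraction in the $C^0$-norm; its unique fixed point $\sigma^{cs}$ defines $W^\mathrm{cs}_\mathrm{loc}(N) := \mathrm{graph}(\sigma^{cs})$. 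To upgrade $C^0$-regularity to $C^r$, I would apply the $C^r$-section theorem (fiber contraction theorem) to the natural lift of $\Gamma$ to the bundle of $k$-jets for $k = 1,\dots,r$; the key spectral bound $\|Df|_{E^u}^{-1}\|\cdot \|Df|_{TN}\|^k < 1$ needed at each stage is built into the normal-hyperbolicity hypothesis.

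Third, verify the six properties. Property (1) is exactly the fixed-point equation $\Gamma(\sigma^{cs}) = \sigma^{cs}$. Property (2) holds because $N$ corresponds to the zero section, which is a common fixed point of $\Gamma$ and its unstable analogue $\Gamma^{-1}$; conversely any point of $W^\mathrm{cs}\cap W^\mathrm{cu}$ has both $E^u$- and $E^s$-components equal to zero and hence lies on $N$. Property (3) follows by differentiating the fixed-point equation along $N$: the derivative of $\sigma^{cs}$ at a point of $N$ is an invariant graph over $E^s$, which uniqueness of the hyperbolic splitting forces to be zero, giving $T_xW^\mathrm{cs} = T_xN \oplus E^s_x$. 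Property (4) follows from $\Gamma$-contraction: any $f$-invariant set in $U$ is captured by the fixed section since iterating would drive its $E^u$-component to zero. Property (5) is the standard cone characterization: a point whose forward orbit remains $\epsilon$-close to the orbit of some $x\in N$ cannot carry a nonzero $E^u$-component, by expansion in the unstable cone; conversely, points on $W^\mathrm{cs}_\mathrm{loc}(N)$ stay close by invariance together with contraction in $E^s$ and boundedness of orbits on $N$. For property (6), apply the classical stable-manifold theorem pointwise on $N$ to obtain a continuous family $\{W_\mathrm{loc}^s(x)\}_{x\in N}$; each leaf lies in $W^\mathrm{cs}_\mathrm{loc}(N)$ by (5), the leaves are pairwise disjoint by uniqueness of the basepoint in $N$ (determined by the $\omega$-limit of the forward orbit under $f|_N$), and they cover $W^\mathrm{cs}_\mathrm{loc}(N)$ because every point of $W^\mathrm{cs}_\mathrm{loc}(N)$ shadows the forward orbit of its projection to $N$.

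The main obstacle is establishing $C^r$-smoothness of $\sigma^{cs}$. The fiber contraction theorem demands delicate spectral estimates that balance unstable expansion against the $k$-th powers of the possibly non-uniform rates tangent to $N$, and one must verify these estimates propagate through the $k$-jet bundle for $k=1,\ldots,r$ while the Lipschitz cone hypothesis is preserved along the induction; the normal hyperbolicity assumption is precisely what makes all these inequalities go through. Once $C^r$-regularity is secured, properties (1)--(6) unwind from the fixed-point construction together with the cone-field arguments sketched above.
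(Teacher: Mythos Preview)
The paper does not supply its own proof of this statement: it is quoted in the appendix as a background result and attributed directly to Hirsch, Pugh and Shub \cite{Hirsch1977}, with no proof environment following the theorem. So there is nothing in the paper to compare your argument against.

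That said, your outline is essentially the classical Hirsch--Pugh--Shub approach itself: tubular-neighborhood coordinates, graph transform on Lipschitz sections over the $E^s$-disk bundle, $C^0$ contraction to produce the fixed section, and then the $C^r$-section (fiber contraction) theorem to bootstrap regularity. The deductions of (1)--(6) from the fixed-point construction and cone-field estimates are the standard ones. One small caveat: in your verification of (6) you assert that the basepoint in $N$ of a strong-stable leaf is determined by the $\omega$-limit of the forward orbit under $f|_N$; this is not quite right in general, since $f|_N$ need not have well-defined $\omega$-limits (indeed $N$ can carry arbitrary center dynamics). The correct argument is that distinct strong-stable leaves through $x\neq x'$ in $N$ cannot meet because a common point would exponentially shadow both $x$ and $x'$ at the strong-stable rate, forcing $d(f^n x, f^n x')\to 0$ faster than the center rate permits. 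With that adjustment, your sketch matches the source the paper cites.
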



\end{document}